\documentclass[12pt]{amsart}

\usepackage{amsthm,amsfonts,amsmath,verbatim,mathrsfs,amssymb,cite,fouridx}
\usepackage[usenames]{color}
\usepackage{todonotes}
\usepackage{hyperref}
\usepackage{breakurl}
\usepackage{url}
\usepackage[enableskew]{youngtab}

\textheight22.8cm \textwidth15.6cm \hoffset-1.7cm \voffset-.5cm

\newtheorem{theorem}{Theorem}[section]
\newtheorem{lemma}[theorem]{Lemma}
\newtheorem{proposition}[theorem]{Proposition}
\newtheorem{corollary}[theorem]{Corollary}

\theoremstyle{definition}

\theoremstyle{remark}

\numberwithin{equation}{section}

\newcommand{\Shat}{\widehat{\mathcal{S}}}
\newcommand{\Int}{\int\limits}
\newcommand{\floor}[1]{\lfloor{#1}\rfloor}
\newcommand{\ceiling}[1]{\lceil{#1}\rceil}
\newcommand{\qhyp}[2]{\fourIdx{}{#1}{}{#2}\phi}
\newcommand{\hyp}[2]{\fourIdx{}{#1}{}{#2}F}
\newcommand{\la}{\lambda}
\newcommand{\si}{\sigma}
\newcommand{\La}{\Lambda}
\newcommand{\Symm}{\mathfrak{S}}
\DeclareMathOperator{\sgn}{sgn}
\DeclareMathOperator{\SL}{SL} 
\DeclareMathOperator{\BC}{BC}
\DeclareMathOperator{\Sp}{Sp}
\DeclareMathOperator{\sL}{sl} 
\DeclareMathOperator{\so}{so}
\DeclareMathOperator{\ortho}{o}
\DeclareMathOperator{\symp}{sp}
\DeclareMathOperator*{\Pf}{Pf}
\newcommand{\D}{\mathrm D}

\renewcommand{\leq}{\leqslant}
\renewcommand{\geq}{\geqslant}

\newcommand{\abs}[1]{\lvert#1\rvert}
\newcommand{\Abs}[1]{\big\lvert#1\big\rvert}
\newcommand{\BigAbs}[1]{\Big\lvert#1\Big\rvert}
\DeclareMathOperator{\eup}{e}
\DeclareMathOperator{\dup}{d\hspace{-1.5pt}}

\newcommand{\R}{{\mathbb R}}

\newcommand{\Z}{\mathbb Z}

\newcommand{\Complex}{\mathbb C}
\newcommand{\qbin}[2]{\genfrac{[}{]}{0pt}{}{#1}{#2}}

\newcommand{\be}{\bar{1}}
\newcommand{\bt}{\bar{2}}
\newcommand{\bd}{\bar{3}}
\newcommand{\bv}{\bar{4}}
\newcommand{\bw}{\bar{5}}

\newcommand{\bil}[2]{\langle #1,#2\rangle}
\DeclareMathOperator{\ch}{ch}

\begin{document}

\title{Discrete analogues of Macdonald--Mehta integrals}

\author[Brent]{Richard P. Brent}
\address{Australian National University, Canberra, ACT 2600, Australia}
\email{Richard.Brent@anu.edu.au}

\author[Krattenthaler]{Christian Krattenthaler}
\address{Fakult\"{a}t f\"{u}r Mathematik, Universit\"{a}t at Wien, 
Oskar-Morgenstern-Platz 1, A-1090 Vienna, Austria}
\email{Christian.Krattenthaler@univie.ac.at}

\author[Warnaar]{S. Ole Warnaar}
\address{School of Mathematics and Physics,
The University of Queensland, Brisbane, QLD 4072, Australia}
\email{O.Warnaar@maths.uq.edu.au}

\thanks{R.P.B.\ is supported by the Australian Research Council
Discovery Grant DP140101417. \newline
\indent C.K.\ is partially supported by the Austrian
Science Foundation FWF, grant S50-N15,
in the framework of the Special Research Program
``Algorithmic and Enumerative Combinatorics.'' \newline
\indent 
S.O.W. is supported by the Australian Research Council Discovery Grant
DP110101234.}

\subjclass[2010]{05A10,05A15,05A19,05E10,11B65}

\begin{abstract}
We consider discretisations of the Macdonald--Mehta integrals
from the theory of finite reflection groups. 
For the classical groups, $\mathrm{A}_{r-1}$, $\mathrm{B}_r$ and 
$\mathrm{D}_r$, we provide closed-form evaluations in those cases for 
which the Weyl denominators featuring in the summands have exponents $1$ 
and $2$.
Our proofs for the exponent-$1$ cases rely on identities for 
classical group characters, while most of 
the formulas for the exponent-$2$ cases 
are derived from a transformation formula for elliptic hypergeometric series
for the root system $\BC_r$.
As a byproduct of our results, we obtain closed-form product formulas
for the (ordinary and signed) enumeration of orthogonal and
symplectic tableaux contained in a box.
\end{abstract}

\maketitle

\section{Introduction}

Motivated by work in \cite{BOS} concerning the Hadamard maximal determinant
problem \cite{Hadamard83}, the recent papers \cite{BO,BOOPAA} considered 
various binomial multi-sum identities of which the following two 
results (the latter being conjectural in \cite{BOOPAA}) are representative:
\begin{multline}\label{Eq_BOOPAA} 
\sum_{i,j,k=-n}^n
\Abs{(i^2-j^2)(i^2-k^2)(j^2-k^2)}
\binom{2n}{n+i} \binom{2n}{n+j} \binom{2n}{n+k} \\
=3\cdot 2^{2n-1}n^3(n-1) \binom{2n}{n}^2
\end{multline}
and 
\begin{equation}\label{Eq_BOOPAA-2} 
\sum_{i,j=-n}^n
\Abs{ij(i^2-j^2)}
\binom{2n}{n+i}\binom{2n}{n+j} =
\frac{2n^3(n-1)}{2n-1} \binom{2n}{n}^2.
\end{equation}

Starting point for the current paper is the observation that these 
kinds of identities are reminiscent of multiple integral evaluations 
due to Macdonald and Mehta. To make this more precise, and to allow us 
to embed \eqref{Eq_BOOPAA} and \eqref{Eq_BOOPAA-2} into larger families of 
\emph{discrete analogues} of Macdonald--Mehta integrals,
we first review the continuous case.

Let $G$ be a finite reflection group consisting of $m$ reflecting 
hyperplanes $H_1,\dots,H_m$ in $\R^r$, see, e.g., \cite{Humphreys90}. 
Let $a_i\in\R^r$ be the normal of $H_i$ normalised up to sign such that
$\|a_i\|^2:=a_i\cdot a_i=2$.
For $x\in\R^r$ define the polynomial
\begin{equation}\label{Eq_Ppoly}
P(x)=P_G(x)=\prod_{i=1}^m (a_i\cdot x).
\end{equation}
In 1982 Macdonald \cite{Macdonald82} conjectured that 
\begin{equation}\label{Eq_Macdonald}
\Int_{\R^r} \abs{P(x)}^{2\gamma} \dup \varphi(x)=\prod_{i=1}^r 
\frac{\Gamma(1+d_i\gamma)}{\Gamma(1+\gamma)},
\end{equation}
where $\varphi(x)$ is the $r$-dimensional Gau\ss ian measure
\[
\dup\varphi(x)=\frac{\eup^{-\|x\|^2/2}}{(2\pi)^{r/2}}\,
\dup x_1\cdots\dup x_r,
\]
$d_1,\dots,d_r$ are the degrees of the fundamental invariants of $G$,
and $\text{Re}(\gamma)>-\min\{1/d_i\}$.
For $G=\mathrm{A}_{r-1}$ the integral \eqref{Eq_Macdonald} had appeared
as an earlier conjecture in work of Mehta and Dyson \cite{Mehta74,MD63} 
and is commonly referred to as Mehta's integral. 
It was first proved by Bombieri, who obtained it as a limit of the
Selberg integral \cite{Selberg44}, see \cite{FW08} for details.
For the two other classical series, 
$\mathrm{B}_r$ and $\mathrm{D}_r$, the conjecture also follows from the
Selberg integral, as was already noted in Macdonald's original 
paper.\footnote{Macdonald attributes this to A. Regev, unpublished.}
Complete proofs of Macdonald's conjecture were subsequently given in
\cite{Etingof10,Garvan89,Opdam89,Opdam93}.

The above-mentioned three classical series are of particular interest
to us here. For these, we have
\begin{subequations}\label{Eq_Poly-ABD}
\begin{gather}
P_{\mathrm{A}_{r-1}}(x)=\prod_{1\leq i<j\leq r} (x_i-x_j)=:\Delta(x), \\
P_{\mathrm{B}_r}(x)=2^{r/2}\prod_{i=1}^r x_i 
\prod_{1\leq i<j\leq r} (x_i^2-x_j^2),\quad\text{and}\quad
P_{\mathrm{D}_r}(x)=\prod_{1\leq i<j\leq r} (x_i^2-x_j^2),
\end{gather}
\end{subequations}
so that we can identify these cases of \eqref{Eq_Macdonald}
as the $(\alpha,\delta)=(1,0),(2,2\gamma),(2,0)$ instances of the 
\emph{Macdonald--Mehta integral}
\begin{equation}\label{Eq_Meh-Mac}
\mathcal{S}_r(\alpha,\gamma,\delta):=\int_{\R^r} 
\abs{\Delta(x^{\alpha})}^{2\gamma} \prod_{i=1}^r \abs{x_i}^{\delta} \dup
\varphi(x).
\end{equation}
It may now be recognised that \eqref{Eq_BOOPAA} and \eqref{Eq_BOOPAA-2} are
discrete analogues of the $\mathrm{D}_3$ and $\mathrm{B}_2$ 
Macdonald--Mehta integral for $\gamma=1/2$.
This suggests that one should study the more general binomial sums
\begin{equation}\label{Eq_Srdef} 
\mathcal{S}_{r,n}(\alpha,\gamma,\delta):= 
\sum_{k_1,\dots,k_r=-n}^n \abs{\Delta(k^\alpha)}^{2\gamma}\,
\prod_{i=1}^r \abs{k_i}^{\delta}\, \binom{2n}{n+k_i},
\end{equation}
where $n$ is a non-negative integer.
It is easy to show that \eqref{Eq_Srdef} is indeed a (scaled) discrete
approximation to \eqref{Eq_Meh-Mac} in the sense that
\[
\lim_{n\to\infty} 
2^{-2rn}\,(\tfrac{1}{2}n)^{-\alpha\gamma \binom{r}{2}-\delta r/2}
\mathcal{S}_{r,n}(\alpha,\gamma,\delta)=
\mathcal{S}_r(\alpha,\gamma,\delta).
\]
Using elements from representation theory and from the theory of
elliptic hypergeometric series, respectively,
we evaluate the \emph{discrete Macdonald--Mehta integral} \eqref{Eq_Srdef} 
for $\gamma=1/2$ and $\gamma=1$ and
$\alpha,\delta$ corresponding to $\mathrm{A}_{r-1}$, $\mathrm{B}_r$ and 
$\mathrm{D}_r$.
By the same methods we can evaluate four additional cases that do not 
appear to be related to reflection groups (or root systems), and the
total of ten evaluations is summarised in the following table:

\begin{center}
\begin{table}[h]	
\begin{tabular}{cccccc}
$\alpha$ & $\gamma$ & $\delta$ & $G$ \\
\hline
$1$ & $1/2$ & $0$ & $\mathrm{A}_{r-1}$ \\
$1$ & $1$ & $0, 1$ & $\mathrm{A}_{r-1}$, -- \\
$2$ & $1/2$ & $0, 1, 2$ & $\mathrm{D}_r$, $\mathrm{B}_r$, -- \\
$2$ & $1$ & $0, 1, 2, 3$ & $\mathrm{D}_r$, --, $\mathrm{B}_r$, --\\
\hline \\
\end{tabular}
\caption{The ten closed-form evaluations}\label{Table_ten}
\end{table}
\end{center}
All of these correspond to discrete analogues of the integrals
\[
\mathcal{S}_r(1,\gamma,0)=
\Int_{\R^r} \prod_{1\leq i<j\leq r} \abs{x_i-x_j}^{2\gamma} \, 
\dup \varphi(x) = \prod_{i=1}^r \frac{\Gamma(1+i\gamma)}{\Gamma(1+\gamma)}
\]
for $\text{Re}(\gamma)>-1/r$,
\begin{align*}
\mathcal{S}_r(1,1,1)&=
\Int_{\R^r} \prod_{1\leq i<j\leq r} \abs{x_i-x_j}^2 
\prod_{i=1}^r \, \abs{x_i} \, \dup \varphi(x) \\ &=
2^{r^2/2} \, \frac{\Gamma(1+r)}{\Gamma(\frac{1}{2})}
\prod_{i=1}^{\floor{\frac{1}{2}r}}
\frac{\Gamma(i)\Gamma(1+i)}{\Gamma(\frac{1}{2})}
\prod_{i=1}^{\ceiling{\frac{1}{2}r}-1}
\frac{\Gamma^2(1+i)}{\Gamma(\frac{1}{2})},
\end{align*}
and
\begin{align*}
\mathcal{S}_r(2,\gamma,\delta)&=
\Int_{\R^r} \prod_{1\leq i<j\leq r} \Abs{x_i^2-x_j^2}^{2\gamma} 
\prod_{i=1}^r \, \abs{x_i}^{\delta} \, \dup \varphi(x) \\ 
& = 2^{2\gamma \binom{r}{2}+\delta r/2}
\prod_{i=1}^r \frac{\Gamma(1+i\gamma)}{\Gamma(1+\gamma)}\cdot
\frac{\Gamma(\frac{1}{2}+(i-1)\gamma+\frac{1}{2}\delta)}{\Gamma(\frac{1}{2})}
\end{align*}
for $\text{Re}(\gamma)>-1/r$ and $\text{Re}(\delta/2+(r-1)\gamma)>-1/2$.
The first of these is the actual Mehta integral.
Also the last integral (which was also considered by Macdonald in
\cite{Macdonald82}) can easily be obtained as a limit of the Selberg 
integral by a generalisation of Regev's limiting procedure.

As a representative example of our results we state the closed-form evaluation 
of $\mathcal{S}_{r,n}(2,\frac{1}{2},0)$. 

\begin{proposition}[Discrete Macdonald--Mehta integral for $\mathrm{D}_r$]
\label{Prop_Bn}
Let $r$ be a positive integer and $n$ a non-negative integer. Then 
\begin{align}\label{Eq_Dr-sum0}
\mathcal{S}_{r,n}(2,\tfrac{1}{2},0)&=\sum_{k_1,\dots,k_r=-n}^n\,
\prod_{1\leq i<j\leq r} \Abs{k_i^2-k_j^2} \,
\prod_{i=1}^r\binom{2n}{n+k_i} \\
&=2^{2rn-r(r-1)}\, 
\frac{\Gamma(1+\frac{1}{2}r)}{\Gamma(\frac{3}{2})}\cdot
\frac{\Gamma(n-\frac{1}{2}r+\frac{3}{2})}{\Gamma(n+1)} \notag \\
&\qquad\qquad\quad\times 
\prod_{i=1}^{r-1} \frac{\Gamma(i+1)}{\Gamma(\frac{3}{2})} \cdot
\frac{\Gamma(2n+1)\, \Gamma({n}-i+\frac{3}{2})}
{\Gamma(2n-i+1)\,\Gamma({n}-i+1)}. \notag 
\end{align}
\end{proposition}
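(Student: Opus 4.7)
The plan is to reduce $\mathcal{S}_{r,n}(2,\tfrac{1}{2},0)$ to a single evaluation of a classical-group character and then invoke the Weyl dimension formula.

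As a first step I symmetrise. The summand is invariant under permutations of the $k_i$ and under the sign flips $k_i\mapsto -k_i$, and it vanishes unless the absolute values $|k_i|$ are pairwise distinct. Parametrising each non-zero orbit of $\Symm_r\ltimes\{\pm 1\}^r$ by its unique representative $n\geq\ell_1>\ell_2>\cdots>\ell_r\geq 0$ produces
\begin{equation*}
\mathcal{S}_{r,n}(2,\tfrac{1}{2},0)
= r!\sum_{n\geq\ell_1>\cdots>\ell_r\geq 0}
2^{\#\{i\,:\,\ell_i>0\}}\prod_{1\leq i<j\leq r}(\ell_i^2-\ell_j^2)
\prod_{i=1}^r \binom{2n}{n+\ell_i}.
\end{equation*}
The Vandermonde $\prod_{i<j}(\ell_i^2-\ell_j^2)=\det(\ell_i^{2(r-j)})_{1\leq i,j\leq r}$ is, up to normalisation, the $\mathrm{D}_r$ Weyl numerator evaluated at $(\ell_1,\ldots,\ell_r)$, which motivates seeking an interpretation of the sum in type $\mathrm{D}_r$ (or the closely related $\mathrm{B}_r$) character theory.

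The central step is a character identity. The binomials arise as weight multiplicities of the $(2n)$-th tensor power of the defining $\mathrm{SU}(2)$-representation,
\begin{equation*}
\prod_{i=1}^r (x_i+x_i^{-1})^{2n}
= \sum_{k\in[-n,n]^r}\Big(\prod_i\binom{2n}{n+k_i}\Big)\prod_i x_i^{2k_i}.
\end{equation*}
I would expand this left-hand side in the irreducible characters of an appropriate classical group---$\mathrm{SO}(2r{+}1)$, $\Sp(2r)$, or $\mathrm{SO}(2r)$---via a Littlewood- or dual-Cauchy-type identity, and then use character orthogonality on the torus to collapse the full sum to the dimension of a single irreducible $V_\lambda$. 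The product form in \eqref{Eq_Dr-sum0} would then emerge from the Weyl dimension formula for $V_\lambda$, followed by routine manipulation of $\Gamma$-ratios via the reflection and duplication identities.

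The main obstacle is identifying the correct character identity. The factor $2^{\#\{i\,:\,\ell_i>0\}}$ encodes the order-$2$ quotient $W(\mathrm{B}_r)/W(\mathrm{D}_r)$, so the right framework must distinguish the behaviour at $\ell_r=0$ from that at $\ell_r>0$. Matching the doubled exponent in $(x+x^{-1})^{2n}$ to the precise classical identity that simultaneously produces the $\mathrm{D}_r$ Weyl denominator---and pinning down which highest weight $\lambda$ survives the evaluation---is the conceptual heart of the proof; once that identification is in hand, the remaining calculation is entirely mechanical.
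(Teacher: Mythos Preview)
Your symmetrisation step is correct and matches the paper exactly (the paper writes the orbit weight as $2^r(1-\tfrac{1}{2}\delta_{k_r,0})$, which is your $2^{\#\{i:\ell_i>0\}}$). After that, however, your plan diverges from what actually works, and the divergence is not cosmetic.

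The mechanism you propose---expand $\prod_i(x_i+x_i^{-1})^{2n}$ in irreducible characters and then invoke torus orthogonality to isolate a single $\dim V_\lambda$---does not apply here. Orthogonality on the torus requires the \emph{square} of the Weyl denominator in the measure, i.e.\ $\gamma=1$, whereas this proposition is the $\gamma=\tfrac{1}{2}$ case with only a single Vandermonde factor $\prod_{i<j}|\ell_i^2-\ell_j^2|$. One power of the denominator antisymmetrises but does not orthogonalise, so nothing collapses to a single highest weight. Indeed the closed form is \emph{not} the dimension of one irreducible: it is (after the paper's change of variables) the product of two dimensions of rectangular representations, one for $\mathrm{SO}(2n)$ and one for $\mathrm{SO}(2n{+}1)$.

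What the paper actually does after symmetrising is the step you are missing. The substitution $k_i=n-i-\lambda'_{r-i+1}+1$ turns the ordered sum into a sum over partitions $\lambda\subseteq(r^n)$, and the product $\prod_{i<j}(k_i^2-k_j^2)\prod_i\binom{2n}{n+k_i}$ together with the orbit weight $u_\lambda^{-1}$ is recognised---via the dual principal specialisation formula for the even-orthogonal characters---as $\so_{2n,\lambda}(1^n)$ times an explicit $\lambda$-independent factor. The sum $\sum_{\lambda\subseteq(r^n)}\so_{2n,\lambda}(x)$ is then evaluated by an Okada-type identity (proved through the Ishikawa--Wakayama minor-summation formula and a Pfaffian evaluation), which gives
\[
\sum_{\lambda\subseteq(r^n)}\so_{2n,\lambda}(x)=\so_{2n,(s^n)}(x)\,\so_{2n+1,(s^n)}(x),\qquad s=\tfrac{r}{2}.
\]
Specialising $x_i=1$ and applying the Weyl dimension formula to \emph{each} rectangular factor then yields the product of gamma functions. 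So the ``correct character identity'' you flag as the obstacle is not a Littlewood or Cauchy expansion of $(x+x^{-1})^{2n}$ at all; it is this Okada box-sum identity, and the answer sits in the $\so_{2n}$/$\so_{2n+1}$ pair rather than in a single module.
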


For $r=2$ this is \cite[Theorem 1]{BO}, for $r=3$ it is 
\eqref{Eq_BOOPAA} (first proved in \cite[Theorem~4.1]{BOOPAA})
and for $r=4$ this proves Conjecture~4.1 of that same paper. 
We further remark that both sides of \eqref{Eq_Dr-sum0} trivially vanish
unless $n\geq r-1$. Indeed, all $k_i^2$ need to be distinct for the summand
to be nonzero, requiring $n\geq r-1$. On the right the factor
$1/\Gamma(n-i+1)|_{i=r-1}$ is identically zero for $0\leq n\leq r-2$,
and the poles of $\prod_i \Gamma(n-\frac{1}{2}r+\frac{3}{2})/\Gamma(2n-i+1)$
at $n=0,1,\dots,(r-3)/2$ (these only arise for odd values of $r$) have zero 
residue.

\medskip
In several instances we obtain $q$-analogues and/or extensions 
to half-integer values of $n$ (in which case the $k_i$ need to be summed
over half-integers so that $n+k_i\in\Z$). 
Furthermore, when $\gamma=1$ we prove more general summations
containing additional free parameters, see
Sections~\ref{sec:gamma=1-2} and \ref{sec:gamma=1-1}.

As a byproduct of our proofs, we obtain some new results on the enumeration 
of tableaux. A particularly elegant example concerns 
\label{page_Sundaram-tableaux} \emph{Sundaram tableaux} \cite{Sundaram90}. 
These are semi-standard Young tableaux on the alphabet
$1<\bar{1}<2<\bar{2}<\cdots<n<\bar{n}<\infty$ such that all entries
in row $k$ are at least $k$ and with the exceptional rule that $\infty$ 
may occur multiple times in each column but at most once in each row.
We denote the size (or number of squares) of $T$ by $\abs{T}$ and
the number of occurrences of the letter $k$ by $m_k(T)$. Obviously,
$\sum_k m_k(T)=\abs{T}$ with $k$ summed over all $2n+1$ letters.
For example, 
\[
\young(1\be25\infty,2\bt3\infty,3\bd\bv\infty,\bw\bw)
\]
is a Sundaram tableau of size $15$ for all $n\geq 5$.
 
\begin{theorem}\label{Thm_Sundaram}
The number of Sundaram tableaux of height at most $n$ and width at most
$r$ is given by
\[
\prod_{i=1}^n \frac{2i+r-1}{2i-1}
\prod_{i,j=1}^n \frac{i+j+r-1}{i+j-1}.
\]
Similarly, the number of Sundaram tableaux of height at most $n$ and 
width at most $r$ such that each tableaux is given a weight
$(-1)^{\abs{T}}$ 
\textup{(}resp. $(-1)^{m_{\infty}(T)}$\textup{)} is given by
\[
(-1)^{rn} \prod_{i,j=1}^n \frac{i+j+r-1}{i+j-1}
\qquad
\bigg(\text{resp.}\;\: \prod_{i,j=1}^n \frac{i+j+r-1}{i+j-1} \: \bigg).
\]
\end{theorem}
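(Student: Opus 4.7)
The plan is to interpret each of the three weighted Sundaram-tableau enumerations as a sum of $\mathrm{Sp}(2n)$-characters over partitions in the rectangle $(r^n)$, and then to evaluate that character-sum using the $\alpha=2$, $\gamma=\tfrac{1}{2}$ row of Table~\ref{Table_ten} (Proposition~\ref{Prop_Bn} and its $\delta=1,2$ analogues).

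Sundaram's theorem \cite{Sundaram90} provides a bijective proof of an identity of the form
\[
\sum_{T} \xi^{m_\infty(T)} \prod_{k=1}^n x_k^{m_k(T)-m_{\bar k}(T)}
= \sum_{\lambda \subseteq (r^n)} \mathrm{sp}_{2n}(\lambda;x_1,\ldots,x_n)\,\xi^{rn - |\lambda|},
\]
where the sum on the left runs over Sundaram tableaux of height $\le n$ and width $\le r$. The three enumerations correspond to the specialisations $(\mathbf{x},\xi)=(\mathbf{1},1)$, $(-\mathbf{1},-1)$, and $(\mathbf{1},-1)$; since all weights of $V_\lambda^{\mathrm{Sp}(2n)}$ have the same parity as $|\lambda|\bmod 2$, the identity $\mathrm{sp}_{2n}(\lambda;-\mathbf{1})=(-1)^{|\lambda|}\dim V_\lambda^{\mathrm{Sp}(2n)}$ shows that the second specialisation yields $(-1)^{rn}\sum_{\lambda\subseteq(r^n)}\dim V_\lambda^{\mathrm{Sp}(2n)}$, the third yields $\sum_{\lambda\subseteq(r^n)}(-1)^{rn-|\lambda|}\dim V_\lambda^{\mathrm{Sp}(2n)}$, and the first is simply $\sum_{\lambda\subseteq(r^n)}\dim V_\lambda^{\mathrm{Sp}(2n)}$.

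Next, writing $k_i:=\lambda_i+n+1-i$, the Weyl dimension formula expresses $\dim V_\lambda^{\mathrm{Sp}(2n)}$ as a constant times $\prod_{i<j}(k_i^2-k_j^2)\prod_i k_i$. Symmetrising the sum over all orderings and sign flips of the tuple $(k_1,\ldots,k_n)$ and absorbing the factors arising from the ranges $1\le k_i\le r+n$ as binomial coefficients, each of the three sums above becomes an instance of $\mathcal{S}_{n,N}(2,\tfrac{1}{2},\delta)$ with $N=\tfrac{1}{2}(r+n-1)$ (a half-integer, handled by the paper's half-integer extension) and with $\delta\in\{0,1,2\}$ selected by the specialisation of $\xi$. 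Proposition~\ref{Prop_Bn} and its $\delta=1,2$ companions then supply the closed-form evaluation.

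The principal obstacle lies in this last step: one must verify carefully that the symmetrised character-sum matches the precise parameters in $\mathcal{S}_{n,N}(2,\tfrac{1}{2},\delta)$, and then simplify the resulting product of Gamma functions down to the compact forms $\prod_i(2i+r-1)/(2i-1)$ and $\prod_{i,j}(i+j+r-1)/(i+j-1)$ stated in the theorem. Once the unweighted case is established, the two signed versions follow as parallel specialisations with only sign bookkeeping, the $(-1)^{rn}$ in the second formula being the factor $\xi^{rn}$ at $\xi=-1$ mentioned above.
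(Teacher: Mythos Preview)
Your proposal has a genuine gap at the very first step. The identity you attribute to Sundaram,
\[
\sum_{T\subseteq(r^n)} \xi^{m_\infty(T)}\, x^T
\;=\; \sum_{\lambda \subseteq (r^n)} \symp_{2n,\lambda}(x)\,\xi^{\,rn - |\lambda|},
\]
is false. At $r=n=2$, $\xi=1$, $x=(1,1)$ the left side counts all Sundaram tableaux in $(2^2)$ and equals $100$ (see the display following Theorem~\ref{Thm_Sundaram}), whereas the right side equals $\sum_{\lambda\subseteq(2^2)}\dim V_\lambda^{\Sp(4)}=1+4+5+10+16+14=50$. What Sundaram's combinatorics actually yields is $\so_{2n+1,\mu}(x)=\sum_T x^T$, the sum over Sundaram tableaux of \emph{fixed} shape~$\mu$; equivalently, since the $\infty$'s occupy a vertical strip $\mu/\nu$ with the remainder a symplectic tableau of shape~$\nu$, one has the branching $\so_{2n+1,\mu}(x)=\sum_{\nu:\,\mu/\nu\text{ vert.\ strip}}\symp_{2n,\nu}(x)$. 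Summing this over $\mu\subseteq(r^n)$ produces $\sum_\nu \symp_{2n,\nu}(x)$ with nontrivial multiplicities depending on~$\nu$, not the single power of $\xi$ you claim.

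The second step is also problematic. Even with the correct character sum in hand, one cannot ``absorb the ranges $1\le k_i\le r+n$ as binomial coefficients'' to obtain an instance of $\mathcal{S}_{n,N}(2,\tfrac12,\delta)$: the defining feature of the latter is the genuine weight $\prod_i\binom{2N}{N+k_i}$, which does not appear in a bare dimension sum. In the paper the logic runs in the opposite direction --- starting from $\mathcal{S}_{r,n}$, the substitution $k_i=n-i-\lambda'_{r-i+1}+1$ together with the dual principal-specialisation formulas of Section~\ref{Sec_Weyl} converts the binomial sum into (a constant times) a character sum, which is then evaluated by the Okada-type identities of Section~\ref{sec:Okada}. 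Theorem~\ref{Thm_Sundaram} itself is read off directly from Corollary~\ref{Cor_Bn-box}: the unweighted count is the $q\to1$ limit of~\eqref{Eq_B-sum-a}, and the two signed counts arise from the $q^{1/2}\to\pm1$ limits of the $\varepsilon=-1$ case of~\eqref{Eq_B-sum-b}, using $\abs{T}=m_\infty(T)+\sum_k\bigl(m_k(T)+m_{\bar k}(T)\bigr)$.
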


For example, when $r=n=2$, there are 
$(3\cdot 5\cdot 3\cdot 4\cdot 4\cdot 5)/
(1\cdot 3\cdot 1\cdot 2\cdot 2\cdot 3)=
100$ tableaux, with the following break-down according to shape

\medskip

\begin{center}
\begin{tikzpicture}[scale=0.3]
\draw[fill] (0,-0.5) circle (0.08cm);
\draw (1.5,0) rectangle (2.5,-1);
\draw (4,0) rectangle (5,-2);
\draw (4,-1) -- (5,-1);
\draw (6.5,0) rectangle (8.5,-1);
\draw (7.5,0) -- (7.5,-1);
\draw (10,0) -- (12,0) -- (12,-1) -- (11,-1) -- (11,-2) -- (10,-2) -- cycle;
\draw (10,-1) -- (11,-1) -- (11,0);
\draw (13.5,0) rectangle (15.5,-2);
\draw (14.5,0) -- (14.5,-2);
\draw (13.5,-1) -- (15.5,-1);
\draw (0,-3) node {$1$};
\draw (2,-3) node {$5$};
\draw (4.5,-3) node {$10$};
\draw (7.5,-3) node {$14$};
\draw (11,-3) node {$35$};
\draw (14.5,-3) node {$35$};
\end{tikzpicture}
\end{center}

\noindent
or according to the multiplicities $m_{\infty}(T)$: 
\[
\Abs{\big\{T:m_{\infty}(T)=0\big\}}=50,\;
\Abs{\big\{T:m_{\infty}(T)=1\big\}}=40,\;
\Abs{\big\{T:m_{\infty}(T)=2\big\}}=10.
\]
Moreover, $1-5+10+14-35+35=20=(3\cdot 4\cdot 4\cdot 5)/
(1\cdot 2\cdot 2\cdot 3)$, and also $50-40+10=20$.

\bigskip
Our paper is organised as follows. 
The next, short section summarises the ten key evaluations corresponding 
to the binomial sums of Macdonald--Mehta-type listed in Table~\ref{Table_ten}.
Section~\ref{Sec_Weyl} reviews some standard material concerning 
classical group characters needed in our subsequent computations.
Section~\ref{sec:Okada} deals with summation identities for orthogonal and 
symplectic characters. Although several such identities were derived 
previously by Okada \cite{Okada98}, his results are not sufficient for our 
purposes, and more refined identities as well as identities in which 
the summands have alternating signs are added to Okada's list.
In Section~\ref{sec:gamma=1/2} we then apply the results from
Section~\ref{sec:Okada} to evaluate the sums
$\mathcal{S}_{r,n}(\alpha,\frac{1}{2},\delta)$ claimed in 
Section~\ref{sec:summary}.
In most cases, we are able to also provide $q$-analogues. 
Our evaluations of $\mathcal{S}_{r,n}(\alpha,1,\delta)$ given in
Section~\ref{sec:summary} are dealt with in
Sections~\ref{sec:gamma=1-2} and \ref{sec:gamma=1-1}. 
All these evaluations result from a single identity, a transformation 
formula between multiple elliptic hypergeometric series originally 
conjectured by the third author \cite[Conj.~6.1]{WarnAG}, and proven 
independently by Rains~\cite[Theorem~4.9]{RainAA} and by Coskun and 
Gustafson~\cite{CoGuAA}. We do not present this formula in its full generality
here, but restrict ourselves to stating the relevant \mbox{($q$-)special} 
case in Theorem~\ref{thm:Rains} at the beginning of 
Section~\ref{sec:gamma=1-2}. The remainder of that section is devoted to
proving our evaluations of the sums $\mathcal{S}_{r,n}(2,1,\delta)$,
while Section~\ref{sec:gamma=1-1} is devoted to proving the evaluations of 
the sums $\mathcal{S}_{r,n}(1,1,\delta)$. In all cases but one, we provide
$q$-analogues which actually contain an additional parameter.
The only exception is the sum $\mathcal{S}_{r,n}(1,1,1)$, where we
are ``only'' able to establish a summation containing an additional
parameter (see Proposition~\ref{Prop_S121}), but for which we were
not able to find a $q$-analogue. Moreover, in this case we needed
to take recourse to an ad hoc approach, since we could not figure
out a way to use the aforementioned transformation formula.
The final section, Section~\ref{sec:disc}, discusses some further
aspects of the work presented in this article, open problems,
and (possible) further avenues.

\medskip
To conclude the introduction, we point out two further articles addressing
the multi-sums in \cite{BOOPAA}. First, in \cite{KrScAC} the double sums
considered in \cite{BOOPAA} are embedded into a three-parameter 
family of double sums, and it is shown that all of them can be explicitly
computed by using complex contour integrals or by the use of the 
computer algebra package \textsl{Sigma} \cite{SchnAA}, thus proving in 
particular all the respective conjectures in \cite{BOOPAA}, including
\eqref{Eq_BOOPAA-2}. 
Second, Bostan, Lairez and Salvy \cite{BoLSAA} recently presented an 
algorithmic approach to finding recurrences for multiple binomial sums of 
the type considered in this paper. Interestingly, complex contour integrals
are again instrumental in this approach. Among other things, 
it allowed them to prove automatically all the double-sum identities 
from \cite{BOOPAA}, again including all the conjectures from \cite{BOOPAA},
such as \eqref{Eq_BOOPAA-2}. 
Moreover, their algorithmic approach is --- in principle ---
capable of proving any of our $r$-fold sum identities for \emph{fixed}~$r$.
(As usual, ``in principle'' refers to the fact that today's computers
may not actually be able to finish the required computations.)
To come up with an automatic proof for any of our identities for 
\emph{generic}~$r$ seems however to be
currently out of reach.

\subsection*{Acknowledgements}
We thank Peter Forrester, Ron King, Soichi Okada and Helmut Prodinger for 
helpful discussions.
The second author also gratefully acknowledges the Galileo Galilei Institute 
of Theoretical Physics in Firenze, Italy, and to the National Institute of 
Mathematical Sciences, Daejeon, South Korea for the inspiring environments 
during his visits in June/July 2015, when most of this work was carried out. 

\section{Summary of the ten primary identities}\label{sec:summary}

Here we summarise as succinctly as possible the ten product formulas 
for the discrete Macdonald--Mehta integral 
$\mathcal{S}_{r,n}(\alpha,\gamma,\delta)$ (defined in \eqref{Eq_Srdef}), 
corresponding to the parameter choices listed in Table~\ref{Table_ten}.
Proofs and further generalisations are given in 
Sections~\ref{sec:Okada}--\ref{sec:gamma=1-1}.

For $\alpha=2$ there are a total of seven cases, given by
\begin{multline}\label{eq:2anyanygp4}
\mathcal{S}_{r,n}(2,\gamma,\delta) \\
=\prod_{i=1}^r
\frac{\Gamma(1+i\gamma)}{\Gamma(1+\gamma)}\cdot
\frac{\Gamma(2n+1)\,\Gamma(n-i-\gamma+\chi+2)\,
\Gamma((i-1)\gamma+\frac{\delta+1}{2})}
{\Gamma(n-i+\chi+1)\,\Gamma(n-i\gamma+\chi+1)\,
\Gamma(n-(i-1)\gamma-\frac{\delta-3}{2}-\chi)},
\end{multline}
where $\chi=1$ if $\delta=0$, and $\chi=0$ otherwise.
For $\alpha=1$ and $\delta=0$ there are two cases, given by
\begin{equation}\label{eq:1any0}
\mathcal{S}_{r,n}(1,\gamma,0)=
2^{2rn-\gamma r(r-1)}
\prod_{i=1}^r \frac{\Gamma(1+i\gamma)}{\Gamma(1+\gamma)}\cdot
\frac{\Gamma(2n+1)\,\Gamma(2n-i+\gamma+2)}
{\Gamma(2n-(i-2)\gamma+1)\,\Gamma(2n-i+2)}.
\end{equation}
(This formula remains valid if $\gamma=0$ or $n$ is a half-integer.)

The remaining case is
\begin{equation}\label{eq:121}
\mathcal{S}_{r,n}(1,1,1) = r!  \prod_{i=1}^{\ceiling{r/2}}
\frac{\Gamma^2(i)\,\Gamma(2n+1)}{\Gamma(n-i+1)\,\Gamma(n-i+2)}
\prod_{i=1}^{\floor{r/2}} \frac{\Gamma(i)\,\Gamma(i+1)\,\Gamma(2n+1)}
{\Gamma^2(n-i+1)}.
\end{equation}

\section{The Weyl character formula and Schur functions of type $G$}
\label{Sec_Weyl}

The purpose of this section is to collect standard material on
classical group characters that we use in
Sections~\ref{sec:Okada} and \ref{sec:gamma=1/2}.

\subsection{Some simple $q$-functions} \label{subsec:simple_q}

Assume that $0<q<1$ and $m,n$ are integers such that $0\leq m\leq n$.
Then the \emph{$q$-shifted factorial}, \emph{$q$-binomial coefficient},
\emph{$q$-gamma function} and \emph{$q$-factorial\/} are given by
\begin{gather*}
(a;q)_n=\prod_{k=1}^n (1-aq^{k-1}),\qquad
(a;q)_{\infty}=\prod_{k=1}^{\infty} (1-aq^{k-1}) \\
\qbin{n}{m}=\qbin{n}{m}_q=\frac{(q^{n-m+1};q)_m}{(q;q)_m} \\
\Gamma_q(x)=(1-q)^{1-x}\,\frac{(q;q)_{\infty}}{(q^x;q)_{\infty}} \\
[n]_q=\frac {1-q^n} {1-q},\qquad 
[n]_q!=\Gamma_q(n+1)=[n]_q\,[n-1]_q\cdots[1]_q.
\end{gather*}

We also need some generalisations of the $q$-shifted factorials to 
partitions. We use standard terminology for partitions, as for
example found in \cite[Chapter~1]{Macdonald95}, More precisely,
let $\la$ be a \emph{partition}, that is, $\la=(\la_1,\la_2,\dots)$ is a 
weakly decreasing sequence of non-negative integers with only finitely 
many non-zero $\la_i$. The positive $\la_i$  are called the \emph{parts} of
$\la$ and the number of parts is called the \emph{length} of the partition,
denoted by $l(\la)$. As usual we identify a partition with its
(Young) diagram,
and the \emph{conjugate partition} $\la'$ is the partition
obtained by reflecting the diagram in the main diagonal.
We shall frequently need partitions of rectangular shape.
By definition, this is a partition all of whose parts are the same.
In order to have a convenient notation, we write $(r^n)$ for the
partition $(r,r,\dots,r)$ with $n$ occurrences of $r$.
If $\la$ is a partition of length at most $n$ and largest part at most
$r$, we use the suggestive notation 
$\la\subseteq (r^n)$. Clearly this is equivalent to
$\la'\subseteq (n^r)$. 
We say that \emph{$(i,j)$ is a square (in the diagram) of $\la$}
and write $(i,j)\in\la$ if and only if $1\leq i\leq l(\la)$
and $1\leq j\leq \la_i$.
Following \cite{Rains05}, we now define
\begin{subequations}\label{Eq_C-def}
\begin{align}
C_{\la}^{-}(a;q)&=\prod_{(i,j)\in\la} (1-aq^{\la_i+\la'_j-i-j}) \\
C_{\la}^{+}(a;q)&=\prod_{(i,j)\in\la} (1-aq^{\la_i-\la'_j+j-i+1}) \\
C_{\la}^{0}(a;q)&=\prod_{(i,j)\in\la} (1-aq^{j-i}).
\end{align}
\end{subequations}
Expressed in terms of ordinary $q$-binomial coefficients we have
\begin{subequations}\label{Eq_C-explicit}
\begin{align}
C_{\la}^{-}(a;q)&=\prod_{i=1}^n (aq^{n-i};q)_{\la_i}
\prod_{1\leq i<j\leq n} \frac{1-aq^{j-i-1}}{1-aq^{\la_i-\la_j+j-i-1}} \\
C_{\la}^{+}(a;q)&=\prod_{i=1}^n \frac{(aq^{2-2i};q)_{2\la_i}}
{(aq^{2-i-n};q)_{\la_i}}
\prod_{1\leq i<j\leq n} \frac{1-aq^{2-i-j}}{1-aq^{\la_i+\la_j-i-j+2}} \\
C_{\la}^{0}(a;q)&=\prod_{i=1}^n (aq^{1-i};q)_{\la_i},
\end{align}
\end{subequations}
where $n$ is an arbitrary integer such that $l(\la)\leq n$.
Since conjugation simply interchanges rows and columns of a partition, it
follows readily from \eqref{Eq_C-def} that 
\begin{subequations}\label{Eq_C-conjugation}
\begin{align}
C_{\la'}^{-}(a;q)&=C_{\la}^{-}(a;q) \\
C_{\la'}^{+}(a;q)&=(-aq)^{\abs{\la}} q^{3n(\la)-3n(\la')}
C_{\la}^{+}\big(a^{-1}q^{-2};q\big) \\
C_{\la'}^{0}(a;q)&=(-a)^{\abs{\la}} q^{n(\la)-n(\la')}
C_{\la}^{0}\big(a^{-1};q\big),
\end{align}
\end{subequations}
where $\abs{\la}:=\la_1+\la_2+\cdots$ and 
$n(\la):=\sum_{i\geq 1} (i-1)\la_i=\sum_{i\geq 1} \binom{\la'_i}{2}$.

\subsection{The Weyl character and dimension formulas}

Let $\mathfrak{g}$ be a complex semi\-simple Lie algebra of rank $r$, 
$\mathfrak{h}$ and $\mathfrak{h}^{\ast}$ the Cartan subalgebra and its dual,
and $\Phi$ the root system spanning $\mathfrak{h}^{\ast}$ with basis of simple
roots $\{\alpha_1,\dots,\alpha_r\}$, see e.g., \cite{Bourbaki02,Humphreys78}. 
Let $\bil{\cdot}{\cdot}$ denote the usual symmetric bilinear
form on $\mathfrak{h}^{\ast}$, and assume the standard identification
of $\mathfrak{h}$ and $\mathfrak{h}^{\ast}$ through the Killing form so that
the coroots are given by
\[
\alpha^{\vee}=\frac{2\alpha}{\bil{\alpha}{\alpha}}=
\frac{2\alpha}{\|\alpha\|^2}.
\]
Let $\omega_1,\dots,\omega_r$ be the fundamental weights, i.e.,
$\bil{\omega_i}{\alpha^{\vee}_j}=\delta_{ij}$, and denote 
the root lattice 
$\Z\alpha_1\oplus\cdots\oplus\Z\alpha_r$
and weight lattice 
$\Z\omega_1\oplus\cdots\oplus\Z\omega_r$ by $Q$ and $P$, respectively.
Further, let $P_{+}$ be the set of dominant (integral) weights,
\[
P_{+}=\big\{\la\in P:~\bil{\la}{\alpha^{\vee}_i}\geq 0 
\text{ for $1\leq i\leq r$}\big\},
\]
and set
\[
Q_{+}=\big\{\alpha\in Q:~\bil{\alpha^{\vee}}{\omega_i}\geq 0 
\text{ for $1\leq i\leq r$}\big\}.
\]
We also denote the set of positive roots by $\Phi_{+}$, so that
$\Phi_{+}=Q_{+}\cap \Phi$.

The irreducible highest weight modules $V(\la)$ of
$\mathfrak{g}$ are indexed by dominant weights~$\lambda$. The
characters corresponding to these modules are defined as
\[
\ch V(\la):=\sum_{\mu\in\mathfrak{h}^{\ast}} \dim(V_{\mu}) \eup^{\mu},
\]
where the $V_{\mu}$ are the weight spaces in the weight-space
decomposition of $V(\la)$ and $\eup^{\la}$ for $\la\in P$ is a formal
exponential satisfying $\eup^{\la}\eup^{\mu}=\eup^{\la+\mu}$.
It is a well-known fact that 
$\dim(V_{\la})=1$ and $\dim(V_{\mu})=0$ if $\la-\mu\not\in Q_{+}$.
The characters can be computed explicitly using the Weyl character formula
\begin{equation}\label{Eq_Weyl-char}
\ch V(\la)=\frac{\sum_{w\in W}\sgn(w) \eup^{w(\la+\rho)-\rho}}
{\prod_{\alpha>0}(1-\eup^{-\alpha})}.
\end{equation}
Here, $W$ is the Weyl group of $\mathfrak{g}$, $\alpha>0$ is 
shorthand for $\alpha\in\Phi_{+}$, and 
$\rho=\frac{1}{2}\sum_{\alpha>0}\alpha=\sum_{i=1}^r \omega_i$
is the Weyl vector. For $\la=0$, Weyl's formula simplifies to the 
denominator identity
\begin{equation}\label{Eq_denom}
\sum_{w\in W}\sgn(w) \eup^{w(\rho)-\rho}=\prod_{\alpha>0}(1-\eup^{-\alpha}).
\end{equation}

The dimension of the highest weight module $V(\la)$ follows from the
Weyl character formula by applying the map $\eup^{\la}\mapsto 1$.
We will require two slightly more general specialisations resulting
in $q$-dimension formulas. Let $s$ be the squared length of the short 
roots in $\Phi$ and define $F$ and $F^{\vee}$ by
\begin{align*}
F:&~\Z[\eup^{-\alpha_0},\dots,\eup^{-\alpha_r}]\to \Z[q^s], &
F(\eup^{-\alpha_i})&=q^{\bil{\rho}{\alpha_i}} \\
F^{\vee}:&~\Z[\eup^{-\alpha_0},\dots,\eup^{-\alpha_r}]\to \Z[q], &
F^{\vee}(\eup^{-\alpha_i})&=q^{\bil{\rho}{\alpha_i^{\vee}}}=q
\end{align*}
for all $i$ with $1\leq i\leq r$.
By defining the $q$-dimensions by
\[
\dim_q V(\la):=F\big(\eup^{-\la}\ch V(\la)\big) \quad\text{and}\quad
\dim_q^{\vee}V(\la):=F^{\vee} \big(\eup^{-\la}\ch V(\la)\big),
\]
we have the following pair of dimension formulas.
\begin{lemma}\label{Lem_qdim}
We have
\begin{subequations}\label{Eq_qdim}
\begin{align}\label{Eq_qdim1}
\dim_q V(\la)&=\prod_{\alpha>0} 
\frac{1-q^{\bil{\la+\rho}{\alpha}}}
{1-q^{\bil{\rho}{\alpha}}} ,\\
\dim_q^{\vee} V(\la)&=\prod_{\alpha>0} 
\frac{1-q^{\bil{\la+\rho}{\alpha^{\vee}}}}
{1-q^{\bil{\rho}{\alpha^{\vee}}}}.
\label{Eq_qdim2}
\end{align}
\end{subequations}
\end{lemma}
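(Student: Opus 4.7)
The plan is to apply the specializations $F$ and $F^{\vee}$ to both sides of the Weyl character formula \eqref{Eq_Weyl-char}, after multiplying by $\eup^{-\la}$ so that both sides live in $\Z[\eup^{-\alpha_1},\dots,\eup^{-\alpha_r}]$, the domain of $F$ and $F^{\vee}$. Using the denominator identity \eqref{Eq_denom} to rewrite $\eup^{-\rho}\sum_{w}\sgn(w)\eup^{w\rho}$ as $\prod_{\alpha>0}(1-\eup^{-\alpha})$, one obtains
\[
\eup^{-\la}\ch V(\la)=\frac{N(\la)}{D},\qquad
N(\la):=\sum_{w\in W}\sgn(w)\,\eup^{w(\la+\rho)-(\la+\rho)},\qquad
D:=\prod_{\alpha>0}(1-\eup^{-\alpha}).
\]
Since $\la+\rho$ is a regular dominant weight, every exponent $w(\la+\rho)-(\la+\rho)$ lies in $-Q_{+}$, so both $N(\la)$ and $D$ genuinely belong to the domain of our specializations.

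Applying $F$ to $D$ immediately yields $F(D)=\prod_{\alpha>0}(1-q^{\bil{\rho}{\alpha}})$, matching the desired denominator in \eqref{Eq_qdim1}. The real work is in evaluating $F(N(\la))$, which \emph{a priori} is a sum over $W$ rather than a product. The trick is to introduce the auxiliary specialization $F_{\la}$ determined by $F_\la(\eup^{-\alpha_i}):=q^{\bil{\la+\rho}{\alpha_i}}$ and to apply it to the denominator identity \eqref{Eq_denom} itself. Its right-hand side becomes $\prod_{\alpha>0}(1-q^{\bil{\la+\rho}{\alpha}})$, while its left-hand side, after factoring out $q^{\bil{\la+\rho}{\rho}}$ and invoking the $W$-invariance of $\bil{\cdot}{\cdot}$, rewrites as $q^{\bil{\la+\rho}{\rho}}\sum_w\sgn(w)q^{-\bil{w(\la+\rho)}{\rho}}$. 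Performing the parallel manipulation on $F(N(\la))$—factoring out $q^{\bil{\rho}{\la+\rho}}$, using $W$-invariance, and relabeling $w\mapsto w^{-1}$ (which preserves $\sgn$)—produces precisely the same expression. This identifies $F(N(\la))=\prod_{\alpha>0}(1-q^{\bil{\la+\rho}{\alpha}})$, so that the quotient with $F(D)$ gives \eqref{Eq_qdim1}.

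For \eqref{Eq_qdim2} the strategy is identical, with $F^{\vee}$ replacing $F$ and the auxiliary specialization $F^{\vee}_\la\colon \eup^{-\alpha_i}\mapsto q^{\bil{\la+\rho}{\alpha_i^{\vee}}}$ replacing $F_\la$. The main obstacle I anticipate is the bookkeeping that identifies $F(N(\la))$ with the $F_\la$-specialized left-hand side of \eqref{Eq_denom}: once $W$-invariance of $\bil{\cdot}{\cdot}$ and the sign-preserving substitution $w\mapsto w^{-1}$ are executed symmetrically on both sides, the two sums become identical term by term, and the product formulas \eqref{Eq_qdim} follow at once. Everything else is routine, and the argument makes no use of the particular Cartan type of $\mathfrak{g}$.
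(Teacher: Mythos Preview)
Your proof is correct and follows essentially the same route as the paper's: apply $F$ to $\eup^{-\la}\ch V(\la)$ via the Weyl character formula, use $W$-invariance of $\bil{\cdot}{\cdot}$ together with the reindexing $w\mapsto w^{-1}$ to swap the roles of $\rho$ and $\la+\rho$ in the exponent, and then recognise the resulting numerator as the denominator identity under the auxiliary specialisation $\eup^{-\alpha}\mapsto q^{\bil{\la+\rho}{\alpha}}$. The paper compresses your auxiliary map $F_\la$ into a single line, but the logic is identical.
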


In the $q\to 1$ limit, \eqref{Eq_qdim1} implies the Weyl dimension formula
\[
\dim V(\la)=
\prod_{\alpha>0} \frac{\bil{\la+\rho}{\alpha}}{\bil{\rho}{\alpha}}.
\]

\begin{proof}
Applying $F$ to 
$\eup^{-\la} \ch V(\la)\in \Z[\eup^{-\alpha_1},\dots,\eup^{-\alpha_r}]$
and using \eqref{Eq_Weyl-char}, we obtain 
\[
\dim_q V(\la)=
\frac{\sum_{w\in W}\sgn(w) q^{-\bil{\rho}{w(\la+\rho)-\la-\rho}}}
{\prod_{\alpha>0}(1-q^{\bil{\rho}{\alpha}})}.
\]
Since $\bil{\rho}{w(\la+\rho)}=\bil{w^{-1}(\rho)}{\la+\rho}$ and
$\sgn(w) =\sgn(w^{-1})$, a change of the summation index from 
$w$ to $w^{-1}$ results in
\[
\dim_q V(\la)=
\frac{\sum_{w\in W}\sgn(w) q^{-\bil{w(\rho)-\rho}{\la+\rho}}}
{\prod_{\alpha>0}(1-q^{\bil{\rho}{\alpha}})}.
\]
The first claim now follows from the denominator formula 
\eqref{Eq_denom} with $\eup^{-u}\mapsto q^{-\bil{u}{\la+\rho}}$.

The proof of \eqref{Eq_qdim2} is nearly identical and is left to the reader.
\end{proof}

In the next four subsections we restrict the Weyl character and dimension
formulas to the four classical types and give ``dual'' forms
for the $q$-dimension formulas needed in our proofs of the discrete
Macdonald--Mehta integrals.

\subsection{The Schur functions}

For $x=(x_1,\dots,x_n)$ and $\la$ a partition of length at most $n$,
the \emph{Schur function} $s_{\la}(x)$ is defined by
\begin{equation}\label{Eq_Schur}
s_{\la}(x):=
\frac{\det_{1\leq i,j\leq n}(x_i^{\la_j+n-j})}
{\det_{1\leq i,j\leq n}(x_i^{n-j})}.
\end{equation}
If $\La_n=\Z[x_1,\dots,x_n]^{\Symm_n}$ denotes the ring of symmetric 
functions in $n$ variables, then the Schur functions indexed by partitions
of length at most $n$ form a basis of $\La_n$.
The Schur functions have a simple interpretation in terms of 
the representation theory of the symmetric group $\Symm_n$ and the general
linear group $\mathrm{GL}_n(\Complex)$. More precisely, 
they are exactly the characters of the irreducible (polynomial) 
representations of $\mathrm{GL}_n(\Complex)$.
The representation theory of $\mathrm{SL}_n(\Complex)$ is almost identical
to that of $\mathrm{GL}_n(\Complex)$, the only notable difference being 
that in the former irreducible representations are indexed by partitions 
of length at most $n-1$, and to interpret such $s_{\la}(x)$ as a character
we should impose the restriction $x_1\cdots x_n=1$.
Since the Schur function $s_{\la}(x)$ is homogeneous of degree $\la$ and
satisfies
\[
s_{\la}(x)=(x_1\cdots x_n)^{\la_n}
s_{(\la_1-\la_n,\dots,\la_{n-1}-\la_n,0)}(x),
\]
these differences do not affect any of the underlying combinatorics.
In particular, if $\mathfrak{g}$ is the Lie algebra $\sL_n(\Complex)$ 
and $\phi$ the ring isomorphism
\begin{subequations}
\begin{gather}
\phi:\Z\big[\eup^{\la}:\la\in P\big]^W\to 
\Z\big[x_1,\dots,x_{n-1},x_1^{-1}\cdots x_{n-1}^{-1}\big]^{\Symm_n}=\La'_n \\
\label{Eq_omega-x}
\phi(\eup^{\omega_i})
=x_1\cdots x_i \quad\text{for $1\leq i\leq n-1$},
\end{gather}
\end{subequations}
then
\begin{equation}\label{Eq_phiVs}
\phi\big(\ch V(\la)\big)=s_{\la}(x)|_{x_n=x_1^{-1}\cdots x_{n-1}^{-1}},
\end{equation}
where on the left $\la$ is a dominant weight parametrised as
\begin{equation}\label{Eq_la-omega}
\la=(\la_1-\la_2)\omega_1+\cdots+(\la_{n-2}-\la_{n-1})\omega_{n-2}
+\la_{n-1}\omega_{n-1}
\end{equation}
and on the right $\la$ is the partition $(\la_1,\dots,\la_{n-1},0)$.

Instead of using the ratio of determinants given in \eqref{Eq_Schur},
we can compute the Schur function in a more combinatorial fashion
using semi-standard Young tab\-leaux. Namely,
\begin{equation}\label{Eq_Schur-tab}
s_{\la}(x)=\sum_T x^T,
\end{equation}
where the sum is over all semi-standard Young
tableaux $T$ of shape $\la$ on the alphabet
$1<2<\cdots<n$ and $x^T:=x_1^{m_1(T)}\cdots x_n^{m_n(T)}$.

From Lemma~\ref{Lem_qdim} and equation \eqref{Eq_phiVs},
it follows that for $l(\la)\leq n$ we have the principal specialisation
formula
\begin{equation}\label{Eq_Schur-PS}
s_{\la}(1,q,\dots,q^{n-1})=
q^{n(\la)} \prod_{1\leq i<j\leq n} 
\frac{1-q^{\la_i-\la_j+j-i}}{1-q^{j-i}}.
\end{equation}
Indeed, since the above only depends on differences between the parts of
$\la$, we may assume without loss of generality that $\la_n=0$.
Since the set of positive roots is given by 
\[
\{\alpha_i+\cdots+\alpha_j:~1\leq i\leq j\leq n-1\},
\]
it follows that for $\la\in P_{+}$ parametrised by \eqref{Eq_la-omega}
we have
\begin{equation}\label{Eq_dimqsl}
\dim_q V(\la)=\dim^{\vee}_q V(\la)=
\prod_{1\leq i\leq j\leq n-1}\frac{1-q^{\la_i-\la_{j+1}+j-i+1}}{1-q^{j-i+1}}.
\end{equation}
Since $F(\eup^{-\omega_i})=q^{i(n-i)/2}$, it follows from
\eqref{Eq_omega-x} that under the induced action of $F$ on $\La_n'$
we have
\[
F(x_i)=q^{i-(n+1)/2} \quad\text{for $1\leq i\leq n-1$}.
\]
We also have $F(\eup^{-\la})=q^{(n-1)\abs{\la}/2-n(\la)}$, where on the right 
$\la$ is the partition corresponding to $\la\in P_{+}$ on the left.
Hence,
\begin{align*}
s_{\la}(1,q,\dots,q^{n-1})&=
q^{(n-1)\abs{\la}/2} 
s_{\la}\big(q^{-(n-1)/2},q^{-(n-3)/2},\dots,q^{(n-1)/2}\big) \\
&=q^{(n-1)\abs{\la}/2} F\big(s_{\la}(x)\big) \\
&=q^{n(\la)} F(\eup^{-\la} \ch V(\la))
=q^{n(\la)} \dim_q V(\la),
\end{align*}
which by \eqref{Eq_dimqsl} implies \eqref{Eq_Schur-PS}.
All of the above is well-known, although rarely made explicit.
Since later we want to refer to analogous results for other groups without
spelling out the (less well-known) details, we have included
the full details of the Schur function case.
We also note that each of the principal specialisation formulas for 
the classical groups has a dual form obtained by using conjugate partitions. 
These dual forms will be crucial later.

\begin{lemma}[Principal specialisation --- dual form]\label{Lem_Schur_PSprime}
For $\la\subseteq (r^n)$, we have
\[
s_{\la}(1,q,\dots,q^{n-1})=q^{n(\la)}
\prod_{i=1}^r \qbin{n+r-1}{\la'_i+r-i}
\qbin{n+r-1}{r-i}^{-1} \prod_{1\leq i<j\leq r}
\frac{1-q^{\la_i'-\la_j'+j-i}}{1-q^{j-i}}.
\]
\end{lemma}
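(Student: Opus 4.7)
My plan is to reduce the dual form to the primal formula \eqref{Eq_Schur-PS} by combining two classical ingredients, after which the desired identity becomes an exercise in cancellation of $q$-shifted factorials. Let $a_i:=\la_i+n-i$ and $b_j:=\la'_j+r-j$ denote the beta-numbers of $\la$ and $\la'$ respectively. Then \eqref{Eq_Schur-PS} reads $s_{\la}(1,q,\dots,q^{n-1})=q^{n(\la)}\,A/D_n$ with
\[
A:=\prod_{1\leq i<j\leq n}(1-q^{a_i-a_j}),\qquad D_n:=\prod_{k=0}^{n-1}(q;q)_k=\prod_{1\leq i<j\leq n}(1-q^{j-i}),
\]
and, after cancelling $q^{n(\la)}$, the goal is to identify $A/D_n$ with the dual expression in the statement.

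The first ingredient is the classical $q$-hook identity (see \cite[Ex.~I.3.1]{Macdonald95})
\[
\prod_{i=1}^n(q;q)_{a_i}=A\cdot\prod_{(i,j)\in\la}\bigl(1-q^{h(i,j)}\bigr),
\]
where $h(i,j)=\la_i+\la'_j-i-j+1$. Since $h^{\la'}(j,i)=h^{\la}(i,j)$, the product $\prod_{(i,j)\in\la}(1-q^{h(i,j)})$ is conjugation-invariant; applying the hook identity to $\la'$ as well (which has length at most $r$) and dividing the two instances therefore gives
\[
\frac{A}{B'}=\frac{\prod_{i=1}^n(q;q)_{a_i}}{\prod_{j=1}^r(q;q)_{b_j}},\qquad B':=\prod_{1\leq i<j\leq r}(1-q^{b_i-b_j}),
\]
where $B'$ is precisely the ``column'' Vandermonde appearing on the right-hand side of the lemma.

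The second ingredient is the combinatorial observation that, for $\la\subseteq(r^n)$, the $n+r$ integers $\{a_i\}_{i=1}^n\cup\{c_j\}_{j=1}^r$ with $c_j:=n+j-1-\la'_j=n+r-1-b_j$ form a partition of $\{0,1,\dots,n+r-1\}$. Disjointness follows because $a_i=c_j$ forces $\la_i+\la'_j=i+j-1$, which contradicts both $\la_i+\la'_j\geq i+j$ for $(i,j)\in\la$ and $\la_i+\la'_j\leq i+j-2$ for $(i,j)\notin\la$ (the latter uses the fact that $\la_i<j$ implies $\la'_j<i$). Consequently
\[
\prod_{i=1}^n(q;q)_{a_i}\cdot\prod_{j=1}^r(q;q)_{n+r-1-b_j}=\prod_{k=0}^{n+r-1}(q;q)_k.
\]

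To finish, I would expand
\[
\frac{\qbin{n+r-1}{b_i}}{\qbin{n+r-1}{r-i}}=\frac{(q;q)_{r-i}(q;q)_{n+i-1}}{(q;q)_{b_i}(q;q)_{n+r-1-b_i}},
\]
use $\prod_{i=1}^r(q;q)_{r-i}=D_r$ together with $D_n\cdot\prod_{i=1}^r(q;q)_{n+i-1}=\prod_{k=0}^{n+r-1}(q;q)_k$, and substitute the two displayed identities above into the dual expression. The $q$-factorials cancel pairwise to leave $A/D_n$ on the nose. The main obstacle is not any conceptual step but the bookkeeping: one has to keep six families of $q$-shifted factorials straight and arrange them so that the final telescoping falls out.
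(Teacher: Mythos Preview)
Your argument is correct. The two ingredients you invoke --- the $q$-hook identity and the complementarity of the beta-sets $\{a_i\}$ and $\{n+r-1-b_j\}$ inside $\{0,1,\dots,n+r-1\}$ --- are both standard, and the final cancellation goes through exactly as you outline: after substituting $A/B'=\prod_i(q;q)_{a_i}\big/\prod_j(q;q)_{b_j}$ and $\prod_i(q;q)_{a_i}\cdot\prod_j(q;q)_{n+r-1-b_j}=\prod_{k=0}^{n+r-1}(q;q)_k$ into the dual expression, the identity $\prod_{k=0}^{n+r-1}(q;q)_k=D_n\cdot\prod_{i=1}^r(q;q)_{n+i-1}$ collapses everything to $A/D_n$.

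The paper takes a different, more systematic route. It rewrites \eqref{Eq_Schur-PS} as $q^{n(\la)}C^0_{\la}(q^n;q)/C^-_{\la}(q;q)$ in terms of the generalised $q$-shifted factorials \eqref{Eq_C-def}, then applies the conjugation rules \eqref{Eq_C-conjugation} to pass to $C^0_{\la'}$ and $C^-_{\la'}$, and finally re-expands via \eqref{Eq_C-explicit} with $n\mapsto r$ together with the simplification \eqref{Eq_simp}. Your hook identity is, in that language, the statement $\prod_i(q;q)_{a_i}=A\cdot C^-_{\la}(q;q)$ combined with the conjugation invariance $C^-_{\la'}=C^-_{\la}$; your complementarity step plays the role that the $C^0$-conjugation and \eqref{Eq_simp} play for the paper. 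So the two arguments are close cousins, but yours is phrased in terms of two self-contained classical facts about partitions, whereas the paper's version is deliberately cast in the $C^{\pm,0}$ framework because the same template is reused verbatim for the $\so_{2n+1}$, $\symp_{2n}$ and $\ortho_{2n}$ analogues (Lemmas~\ref{Lem_Bn-PS} and~\ref{Lemma_Cn} and equation~\eqref{Eq_ortho-PS-dual}), where no equally clean ``hook plus complementarity'' shortcut is available. Your approach is arguably the more elementary one for this particular lemma; the paper's approach is the one that scales.
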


\begin{proof}
Perhaps the most elegant proof is to use the \emph{dual Jacobi--Trudi identity} 
\cite[p.~41]{Macdonald95} and the principal specialisation formula for 
the \emph{elementary symmetric functions} \cite[p.~27]{Macdonald95}, combined 
with the the determinant evaluation \cite[Theorem~26]{Krattenthaler99}.

In view of the other types yet to be discussed, we will proceed in a slightly
different manner.
By \eqref{Eq_C-explicit}, we can write \eqref{Eq_Schur-PS} as
\[
s_{\la}(1,q,\dots,q^{n-1})=
q^{n(\la)} \frac{C_{\la}^{0}(q^n;q)}{C_{\la}^{-}(q;q)}.
\]
According to \eqref{Eq_C-conjugation}, the right-hand side also equals
\[
(-q^n)^{\abs{\la}}q^{n(\la')}\frac{C_{\la'}^{0}(q^{-n};q)}{C_{\la'}^{-}(q;q)},
\]
which, by \eqref{Eq_C-explicit} with $n\mapsto r$, is
\[
(-q^n)^{\abs{\la}} q^{n(\la')} 
\prod_{i=1}^r \frac{(q^{1-i-n};q)_{\la_i'}}{(q^{r-i+1};q)_{\la_i'}}
\prod_{1\leq i<j\leq r} \frac{1-q^{\la'_i-\la'_j+j-i}}{1-q^{j-i}}.
\]
By 
\begin{equation}\label{Eq_simp}
\prod_{i=1}^r \frac{(q^{1-i-n};q)_{\la'_i}}{(q^{r-i+1};q)_{\la'_i}}
=(-q^n)^{-\abs{\la}}
q^{n(\la)-n(\la')} \prod_{i=1}^r \frac{(q;q)_{n+i-1}(q;q)_{r-i}}
{(q;q)_{\la'_i+r-i} (q;q)_{n+i-\la'_i-1}},
\end{equation}
the lemma follows.
\end{proof}

\subsection{The odd-orthogonal Schur functions}

A sequence $(\la_1,\dots,\la_n)$ is called a half-partition if 
$\la_1\geq\la_2\geq\cdots\geq\la_n>0$ and $\la_i\in\Z+1/2$.

For $x=(x_1,\dots,x_n)$ and $\la=(\la_1,\dots,\la_n)$ a partition or 
half-partition, the \emph{odd-orthogonal Schur functions} are defined as
(cf.\ \cite{FH91,Littlewood50})
\begin{equation}\label{Eq_so2n1-def}
\so_{2n+1,\la}(x):=
\frac{\det_{1\leq i,j\leq n}
\big(x_i^{\la_j+n-j+1/2}-x_i^{-(\la_j+n-j+1/2)}\big)}
{\det_{1\leq i,j\leq n}\big(x_i^{n-j+1/2}-x_i^{-(n-j+1/2)}\big)}.
\end{equation}
The $\so_{2n+1,\la}(x)$ again arise from \eqref{Eq_Weyl-char}, this time 
for $\mathfrak{g}=\mathrm{so}_{2n+1}(\Complex)$. Defining $\phi$ by
\begin{gather*}
\phi:\Z\big[\eup^{\la}:\la\in P\big]^W\to 
\Z\big[x_1^{\pm 1/2},\dots,x_n^{\pm 1/2}\big]^{B_{n}} \\
\phi(\eup^{-\omega_i})
=\begin{cases}
x_1\cdots x_i, & \text{for $1\leq i\leq n-1$}, \\
(x_1\cdots x_n)^{1/2}, & \text{for $i=n$},
\end{cases}
\end{gather*}
where $\mathrm{B}_n$ is the \emph{hyperoctahedral group} acting on the $x_i$
by permuting them and by sending $x_i$ to $x_i^{-1}$ for some~$i$,
we have
\begin{equation}\label{Eq_phiVs-so-odd}
\phi\big(\ch V(\la)\big)=\so_{2n+1,\la}(x),
\end{equation}
where on the left $\la$ is a dominant weight parametrised as
\[
\la=(\la_1-\la_2)\omega_1+\cdots+(\la_{n-1}-\la_n)\omega_{n-1}
+2\la_n\omega_n,
\]
and on the right $\la$ is the partition or half-partition
$(\la_1,\dots,\la_n)$.

For later use, we will also define the companion
\begin{equation}\label{Eq_soplus-def}
\so^{+}_{2n+1,\la}(x):=
\frac{\det_{1\leq i,j\leq n}
\big(x_i^{\la_j+n-j+1/2}+x_i^{-(\la_j+n-j+1/2)}\big)}
{\det_{1\leq i,j\leq n}\big(x_i^{n-j+1/2}+x_i^{-(n-j+1/2)}\big)}.
\end{equation}
If $\la$ is a partition, it readily follows that
\begin{equation}\label{Eq_soplusso}
\so^{+}_{2n+1,\la}(x)=(-1)^{\abs{\la}} \so_{2n+1,\la}(-x).
\end{equation}
For half-partitions, however, 
$\so^{+}_{2n+1,\la}(x)$ is a rational function such that
\[
\so^{+}_{2n+1,\la}(x) D(x)\in \Z[x^{\pm}]^{B_{n}},\qquad
D(x):=\prod_{i=1}^n (x_i^{1/2}+x_i^{-1/2}).
\]
Since for half-partitions $\so_{2n+1,\la}(x)D(x)\in \Z[x^{\pm}]^{B_{n}}$, 
it follows that, regardless of the type of $\la$, we have
\[
\so_{2n+1,\la}(x)\so^{+}_{2n+1,\la}(x)\in \Z[x^{\pm}]^{B_{n}}.
\]

\medskip

In terms of the Sundaram tableaux introduced on 
page~\pageref{page_Sundaram-tableaux}, for $\la$ a partition we have
\[
\so_{2n+1,\la}(x)=\sum_T x^T,
\]
where the sum is over all Sundaram tableaux of shape $\la$ and
\begin{equation}\label{Eq_xT}
x^T:=\prod_{k=1}^n x_k^{m_k(T)-m_{\bar{k}}(T)}.
\end{equation}

\begin{lemma}[Principal specialisation --- dual form]\label{Lem_Bn-PS}
For $\la\subseteq (r^n)$ a partition, we have
\begin{subequations}
\begin{multline}\label{Eq_so-odd-PS-1}
\so_{2n+1,\la}(q,q^2,\dots,q^n)=
q^{n(\la)-n\abs{\la}}
\prod_{i=1}^r \qbin{2n+2r-1}{\la'_i+r-i}
\qbin{2n+2r-1}{r-i}^{-1} \\ \times
\prod_{1\leq i<j\leq r} \frac{1-q^{\la'_i-\la'_j+j-i}}{1-q^{j-i}}\cdot
\frac{1-q^{2n-\la'_i-\la'_j+i+j-1}}{1-q^{2n+i+j-1}}
\end{multline}
and
\begin{multline}\label{Eq_so-odd-PS-2}
\so_{2n+1,\la}(q^{1/2},q^{3/2},\dots,q^{n-1/2})=
q^{n(\la)-(n-1/2)\abs{\la}} \\ \times
\prod_{i=1}^r \frac{1+q^{n-\la'_i+i-1/2}}{1+q^{n+i-1/2}} \, 
\qbin{2n+2r-1}{\la'_i+r-i}
\qbin{2n+2r-1}{r-i}^{-1} \\ \times
\prod_{1\leq i<j\leq r} \frac{1-q^{\la'_i-\la'_j+j-i}}{1-q^{j-i}}\cdot
\frac{1-q^{2n-\la'_i-\la'_j+i+j-1}}{1-q^{2n+i+j-1}}.
\end{multline}
\end{subequations}
\end{lemma}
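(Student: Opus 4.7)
The plan is to mimic the proof of Lemma \ref{Lem_Schur_PSprime}, using the Weyl $q$-dimension formula (Lemma \ref{Lem_qdim}) together with the $C$-function conjugation identities \eqref{Eq_C-conjugation}. First, applying the $\mathfrak{so}_{2n+1}(\Complex)$-analogue of the Schur reasoning to \eqref{Eq_phiVs-so-odd}, and noting that $F^{\vee}(\eup^{-e_i})=q^{n-i+1}$, the $\B_n$-invariance of $\so_{2n+1,\la}(x)$ gives
\[
\so_{2n+1,\la}(q,q^2,\dots,q^n)=q^{n(\la)-n\abs{\la}}\,\dim_q^{\vee}V(\la).
\]
Substituting the positive roots $e_i\pm e_j$ and $e_i$ of $\B_n$ and their coroots into \eqref{Eq_qdim2}, then comparing with \eqref{Eq_C-explicit}, I verify that
\[
\dim_q^{\vee}V(\la)=\frac{\prod_{i=1}^n(q^{2n-2i+2};q)_{2\la_i}}{C_{\la}^{-}(q;q)\,C_{\la}^{+}(q^{2n-1};q)},
\]
with the two $C$-functions coming respectively from the long roots $e_i-e_j$ and from the combined contribution of the long roots $e_i+e_j$ and short roots $e_i$.

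Next, conjugation via \eqref{Eq_C-conjugation} turns $C_{\la}^{-}(q;q)$ into $C_{\la'}^{-}(q;q)$ and $C_{\la}^{+}(q^{2n-1};q)$ into $(-q^{2n})^{\abs{\la}}q^{3n(\la')-3n(\la)}C_{\la'}^{+}(q^{-2n-1};q)$, and \eqref{Eq_C-explicit} with $n\mapsto r$ (valid since $\la\subseteq(r^n)$ forces $l(\la')\le r$) expands both as products indexed by $i,j\in\{1,\dots,r\}$. Converting negative powers via $(1-q^{-t})=-q^{-t}(1-q^t)$ produces precisely the factors $1-q^{2n-\la'_i-\la'_j+i+j-1}$ and $1-q^{2n+i+j-1}$ appearing in \eqref{Eq_so-odd-PS-1}, and combining the numerator $\prod_i(q^{2n-2i+2};q)_{2\la_i}$ against the shifted factorials coming from $C_{\la'}^{\pm}$ assembles, in full analogy with \eqref{Eq_simp}, the $q$-binomial coefficients $\qbin{2n+2r-1}{\la'_i+r-i}$ and $\qbin{2n+2r-1}{r-i}$. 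The residual monomial powers of $q$ collapse to $q^{n(\la)-n\abs{\la}}$ once one absorbs $q^{3n(\la')-3n(\la)}$ and $(-q^{2n})^{\abs{\la}}$ from the conjugation.

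For \eqref{Eq_so-odd-PS-2}, I instead use the specialisation $F$, for which $F(\eup^{-e_i})=q^{n-i+1/2}$, giving
\[
\so_{2n+1,\la}(q^{1/2},q^{3/2},\dots,q^{n-1/2})=q^{n(\la)-(n-1/2)\abs{\la}}\dim_q V(\la).
\]
The ratio $\dim_q V(\la)/\dim_q^{\vee}V(\la)=\prod_{i=1}^n(1+q^{n-i+1/2})/(1+q^{\la_i+n-i+1/2})$ is immediate from $1-q^{2t}=(1-q^t)(1+q^t)$ applied to the short-root contributions, and the classical bijection identity
\[
\{\la_i+n-i\}_{i=1}^n\sqcup\{n-\la'_j+j-1\}_{j=1}^r=\{0,1,\dots,n+r-1\}
\]
converts this $\la$-product into $\prod_{j=1}^r(1+q^{n-\la'_j+j-1/2})/(1+q^{n+j-1/2})$, matching exactly the extra factor in \eqref{Eq_so-odd-PS-2}.

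The hard part is the bookkeeping in the second paragraph: one must carefully track the powers of $q$ and the signs through the $C^{+}$-conjugation and verify that the combined $q$-shifted factorials do indeed regroup into the claimed $q$-binomial coefficients. In spirit, however, the argument is a faithful $\B_n$-analogue of the Schur-function proof.
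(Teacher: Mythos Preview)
Your proposal is correct and follows essentially the same route as the paper: express the principal specialisation via the Weyl $q$-dimension formula, package it as $C^{0}_{\la}/(C^{-}_{\la}C^{+}_{\la})$, conjugate via \eqref{Eq_C-conjugation}, and then expand using \eqref{Eq_C-explicit} with $n\mapsto r$ together with the analogue of \eqref{Eq_simp}. One small point worth making explicit: the paper records the numerator of $\dim_q^{\vee}V(\la)$ as $C^{0}_{\la}(q^n,-q^n,q^{n+1/2},-q^{n+1/2};q)$, which is exactly your $\prod_{i=1}^n(q^{2n-2i+2};q)_{2\la_i}$, so that \eqref{Eq_C-conjugation} can be applied to it as well; in your write-up you only mention conjugating $C^{-}_{\la}$ and $C^{+}_{\la}$, but the numerator must also be passed to $\la'$ before the $q$-binomials can assemble, and this is precisely where the analogue of \eqref{Eq_simp} (with $n\mapsto 2n+r$) enters.

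For \eqref{Eq_so-odd-PS-2} you take a mild shortcut that the paper does not: rather than rerun the entire $C$-function manipulation with $\dim_q$ in place of $\dim_q^{\vee}$, you compute the ratio $\dim_q V(\la)/\dim_q^{\vee}V(\la)$ directly from the short-root contributions and then convert the resulting $\la$-indexed product into a $\la'$-indexed one via the complementary-sequence bijection $\{\la_i+n-i\}\sqcup\{n-\la'_j+j-1\}=\{0,\dots,n+r-1\}$. This is a clean way to isolate the extra factor $\prod_j(1+q^{n-\la'_j+j-1/2})/(1+q^{n+j-1/2})$ and avoids repeating the bookkeeping.
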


\begin{proof}
Let $\epsilon_1,\dots,\epsilon_n$ be the standard unit vectors in $\R^n$.
Assuming the realisation 
$\{\alpha_1,\dots,\alpha_n\}=
\{\epsilon_1-\epsilon_2,\dots,\epsilon_{n-1}-\epsilon_n,\epsilon_n\}$
for the simple roots of $\mathrm{so}_{2n+1}(\Complex)$
(see \cite{Humphreys78}), 
the fundamental weights and positive roots are given by
\begin{align*}
\{\omega_1,\dots,\omega_n\}&=\{\epsilon_1,\epsilon_1+\epsilon_2,\dots,
\epsilon_1+\cdots+\epsilon_{n-1},\tfrac{1}{2}(\epsilon_1+\cdots+\epsilon_n)\} ,\\
\{\alpha\in\Phi: \alpha>0\}&=
\{\epsilon_i: 1\leq i\leq n\}\cup\{\epsilon_i\pm \epsilon_j: 1\leq i<j\leq n\}.
\end{align*}
Hence, by \eqref{Eq_qdim2}, \eqref{Eq_phiVs-so-odd} and
$F(x_i)=q^{n-i+1}$, we have
\begin{align}\label{Eq_so-odd-PS-check}
\so_{2n+1,\la}(q,q^2,\dots,q^n)
&=q^{n(\la)-n\abs{\la}} \dim_q^{\vee} V(\Lambda) \\
&=q^{n(\la)-n\abs{\la}} \prod_{i=1}^n 
\frac{1-q^{2\la_i+2n-2i+1}}{1-q^{2n-2i+1}} \notag \\ 
& \qquad \times
\prod_{1\leq i<j\leq n}\frac{1-q^{\la_i-\la_j+j-i}}{1-q^{j-i}}\cdot
\frac{1-q^{\la_i+\la_j+2n-i-j+1}}{1-q^{2n-i-j+1}}. \notag
\end{align}
It follows from \eqref{Eq_C-explicit} that the right-hand side can be 
expressed in terms of the generalised $q$-shifted factorials as
\[
q^{n(\la)-n\abs{\la}} \,
\frac{C^0_{\la}(q^n,-q^n,q^{n+1/2},-q^{n+1/2};q)}
{C^{-}_{\la}(q;q)C^{+}_{\la}(q^{2n-1};q)},
\]
where $C^0_{\la}(a_1,\dots,a_k;q)=C^0_{\la}(a_1;q)\cdots C^0_{\la}(a_k;q)$.
By \eqref{Eq_C-conjugation}, this is also
\[
(-q^{n+1})^{\abs{\la}} q^{n(\la')} \,
\frac{C^0_{\la'}(q^{-n},-q^{-n},q^{-n-1/2},-q^{-n-1/2};q)}
{C^{-}_{\la'}(q;q)C^{+}_{\la'}(q^{-2n-1};q)}.
\]
Again using \eqref{Eq_C-explicit}, but now with $n$ replaced by $r$,
this is
\begin{equation}
(-q^{n+r})^{\abs{\la}} q^{n(\la')} \prod_{i=1}^r 
\frac{(q^{1-i-2n-r};q)_{\la'_i}}{(q^{r-i+1};q)_{\la'_i}} 
\prod_{1\leq i<j\leq r} \frac{1-q^{\la'_i-\la'_j+j-i}}{1-q^{j-i}} \cdot
\frac{1-q^{2n-\la_i-\la_j+i+j-1}}{1-q^{2n+i+j-1}}.
\end{equation}
By \eqref{Eq_simp} with $n\mapsto 2n+r$, the first claim follows.

The second specialisation \eqref{Eq_so-odd-PS-2} follows in much 
the same way by applying \eqref{Eq_C-explicit} and 
\eqref{Eq_C-conjugation} to
\begin{align}\label{Eq_so-odd-PS}
\so_{2n+1,\la}&(q^{1/2},q^{3/2},\dots,q^{n-1/2}) \\
&=q^{n(\la)-(n-1/2)\abs{\la}} \dim_q V(\Lambda) \notag \\
&=q^{n(\la)-(n-1/2)\abs{\la}} 
\prod_{i=1}^n \frac{1-q^{\la_i+n-i+1/2}}{1-q^{n-i+1/2}} \notag \\ 
& \qquad \times
\prod_{1\leq i<j\leq n}\frac{1-q^{\la_i-\la_j+j-i}}{1-q^{j-i}}\cdot
\frac{1-q^{\la_i+\la_j+2n-i-j+1}}{1-q^{2n-i-j+1}}. \notag \qedhere
\end{align}
\end{proof}

For later reference we also state the principal specialisation
of $\so_{2n+1,\la}^{+}(x)$.

\begin{lemma}
For $\la=(\la_1,\dots,\la_n)$ a partition or half-partition, we have
\begin{multline}\label{Eq_so-odd-min-PS}
\so_{2n+1,\la}^{+}(q^{1/2},\dots,q^{n-1/2})
=q^{n(\la)-(n-1/2)\abs{\la}} 
\prod_{i=1}^n \frac{1+q^{\la_i+n-i+1/2}}{1+q^{n-i+1/2}} \\ 
\times
\prod_{1\leq i<j\leq n}\frac{1-q^{\la_i-\la_j+j-i}}{1-q^{j-i}}\cdot
\frac{1-q^{\la_i+\la_j+2n-i-j+1}}{1-q^{2n-i-j+1}}.
\end{multline}
\end{lemma}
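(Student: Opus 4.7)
My plan is a direct determinant evaluation, parallel to (but distinct from) the $\dim_q$ argument used for Lemma~\ref{Lem_Bn-PS}. Using the definition \eqref{Eq_soplus-def} together with the identity $x_i^{\pm\mu_j}=(q^{\pm\mu_j})^{i-1/2}$ at $x_i=q^{i-1/2}$, I would first rewrite
\[
\so^+_{2n+1,\la}(q^{1/2},\dots,q^{n-1/2})=\frac{\det_{1\le i,j\le n}\bigl(u_j^{i-1/2}+u_j^{-(i-1/2)}\bigr)}{\det_{1\le i,j\le n}\bigl((u_j^0)^{i-1/2}+(u_j^0)^{-(i-1/2)}\bigr)},
\]
where $u_j:=q^{\la_j+n-j+1/2}$ and $u_j^0:=q^{n-j+1/2}$. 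The whole lemma then reduces to a single closed-form evaluation of a determinant of the form $\det_{i,j}(u_j^{i-1/2}+u_j^{-(i-1/2)})$.

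The key identity to establish is
\[
\det_{1\le i,j\le n}\bigl(u_j^{i-1/2}+u_j^{-(i-1/2)}\bigr)=\prod_{j=1}^n\bigl(u_j^{1/2}+u_j^{-1/2}\bigr)\prod_{1\le i<j\le n}\frac{(u_j-u_i)(u_iu_j-1)}{u_iu_j}.
\]
Writing $v_j:=u_j^{1/2}$ and $z_j:=v_j^2+v_j^{-2}$, a short induction delivers the Dickson/Chebyshev-type factorisation $v^{2i-1}+v^{-(2i-1)}=(v+v^{-1})\,Q_i(v^2+v^{-2})$, where $Q_i$ is the monic polynomial of degree $i-1$ defined by $Q_0=Q_1=1$ and the three-term recurrence $Q_{i+1}(z)=zQ_i(z)-Q_{i-1}(z)$. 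Pulling $u_j^{1/2}+u_j^{-1/2}$ out of column~$j$, and using row operations (valid because each $Q_i$ is monic of degree $i-1$) to replace $Q_i(z_j)$ by $z_j^{i-1}$, reduces the remaining matrix to a Vandermonde in $z_1,\dots,z_n$, which evaluates to $\prod_{i<j}(z_j-z_i)$. The elementary manipulation $z_j-z_i=(u_j-u_i)(u_iu_j-1)/(u_iu_j)$ then produces the displayed product formula.

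Forming the ratio with $u_j$ in the numerator and $u_j^0$ in the denominator is then routine. The factor $\prod_j(u_j^{1/2}+u_j^{-1/2})/((u_j^0)^{1/2}+(u_j^0)^{-1/2})$ yields the short-root contribution $\prod_i(1+q^{\la_i+n-i+1/2})/(1+q^{n-i+1/2})$ up to a power of~$q$, and the ratios of the $\prod_{i<j}(u_j-u_i)$ and $\prod_{i<j}(u_iu_j-1)$ terms give the two long-root products appearing in~\eqref{Eq_so-odd-min-PS}; tracking the extracted monomials in~$q$---from the short-root ratio, the $(u_j-u_i)/(u_j^0-u_i^0)$ ratios, and the $(u_i^0u_j^0)/(u_iu_j)$ corrections---delivers the prefactor $q^{n(\la)-(n-1/2)\abs{\la}}$. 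The main obstacle is the determinant identity itself; the Chebyshev-type factorisation is the ``$+$''-sibling of the analogous $v^{2i-1}-v^{-(2i-1)}=(v-v^{-1})\widetilde Q_i(v^2+v^{-2})$ implicit in the derivation of \eqref{Eq_so-odd-PS-2}, and the argument works uniformly for partitions and half-partitions~$\la$.
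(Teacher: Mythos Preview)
Your proposal is correct and follows essentially the same route as the paper: specialise $x_i=q^{n-i+1/2}$ in \eqref{Eq_soplus-def}, evaluate the numerator and denominator determinants separately via a product formula, and take the ratio. Your key determinant identity
\[
\det_{1\le i,j\le n}\bigl(u_j^{i-1/2}+u_j^{-(i-1/2)}\bigr)=\prod_{j=1}^n\bigl(u_j^{1/2}+u_j^{-1/2}\bigr)\prod_{1\le i<j\le n}\frac{(u_j-u_i)(u_iu_j-1)}{u_iu_j}
\]
is, after reversing the row index, precisely \eqref{Eq_Bn-denom-2}. The only genuine difference is in how this identity is established: the paper obtains it in one line by substituting $x_i\mapsto -x_i$ in the $\mathrm{B}_n$ Weyl denominator identity \eqref{Eq_Bn-denom}, whereas you give a self-contained elementary derivation via the Chebyshev-type factorisation $v^{2i-1}+v^{-(2i-1)}=(v+v^{-1})Q_i(v^2+v^{-2})$ and a Vandermonde reduction. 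Your argument is longer but has the merit of not relying on the Weyl denominator formula; the paper's is shorter but assumes \eqref{Eq_Bn-denom} as known input.
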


\begin{proof}
According to \eqref{Eq_denom}, the denominator identity for $\mathrm{B}_n$ 
(or $\so_{2n+1,\la}(\Complex)$) is given by 
(see also \cite[Equation~(2.4)]{Krattenthaler99})
\begin{multline}\label{Eq_Bn-denom}
\det_{1\leq i,j\leq n} \Big(x_i^{n-j+1/2}-x_i^{-(n-j+1/2)}\Big) \\
=(-1)^{\binom{n+1}{2}} \prod_{i=1}^n x_i^{1/2-n}(1-x_i)
\prod_{1\leq i<j\leq n} (x_i-x_j)(1-x_ix_j).
\end{multline}
Replacing $x_i$ by $-x_i$ (readers worried about a choice of 
branch-cut should first multiply both sides by $\prod_i x_i^{-1/2}$ and
later divide by this factor) and taking the transpose of the determinant,
we obtain (see also \cite[Equation~(2.6)]{Krattenthaler99})
\begin{equation}\label{Eq_Bn-denom-2}
\det_{1\leq i,j\leq n} \Big(x_j^{n-i+1/2}+x_j^{-(n-i+1/2)}\Big) 
=\prod_{i=1}^n x_i^{1/2-n}(1+x_i)
\prod_{1\leq i<j\leq n} (x_i-x_j)(1-x_ix_j).
\end{equation}

If we specialise $x_i=q^{n-i+1/2}$ ($1\leq i\leq n$) in
\eqref{Eq_soplus-def}, then we get
\begin{equation}
\so^{+}_{2n+1,\la}(q^{1/2},\dots,q^{n-1/2}) 
=\frac{\det_{1\leq i,j\leq n}
\big(q^{(\la_j+n-j+1/2)(n-i+1/2)}+q^{-(\la_j+n-j+1/2)(n-i+1/2)}\big)}
{\det_{1\leq i,j\leq n}
\big(q^{(n-j+1/2)(n-i+/2)}+q^{-(n-j+1/2)(n-i+1/2)}\big)}.
\end{equation}
By \eqref{Eq_Bn-denom-2} with $x_j=q^{\la_j+n-j+1/2}$ or $x_j=q^{n-j+1/2}$,
both determinants on the right-hand side 
can be expressed in product form, resulting
in \eqref{Eq_so-odd-min-PS}.
\end{proof}

\subsection{The symplectic Schur functions}

For $x=(x_1,\dots,x_n)$ and $\la$ a partition of length at most $n$,
the \emph{symplectic Schur functions} are defined as
\begin{equation}\label{Eq_symp}
\symp_{2n,\la}(x):=
\frac{\det_{1\leq i,j\leq n}
\big(x_i^{\la_j+n-j+1}-x_i^{-(\la_j+n-j+1)}\big)}
{\det_{1\leq i,j\leq n}\big(x_i^{n-j+1}-x_i^{-(n-j+1)}\big)}.
\end{equation}
If $\mathfrak{g}=\symp_{2n}(\Complex)$, then
\[
\phi\big(\ch V(\la)\big)=\symp_{2n,\la}(x),
\]
where $\phi(\eup^{-\omega_i})=x_1\cdots x_i$ ($1\leq i\leq n$) and
\[
P_{+}\ni
\la=(\la_1-\la_2)\omega_1+\cdots+(\la_{n-1}-\la_n)\omega_{n-1}+\la_n\omega_n.
\]

To express this combinatorially, we need the symplectic tableaux of King 
and El-Sharkaway \cite{King76,KES83}. These are semi-standard Young tableaux 
on $1<\bar{1}<2<\bar{2}<\cdots<n<\bar{n}$ such that all entries in row $k$ 
are at least $k$. For example, 
\[
\young(1\be23\bw,2\bt34,\bv\bv5)
\]
is a symplectic tableau for $n\geq 5$. The symplectic analogue of
\eqref{Eq_Schur-tab} then is
\[
\symp_{2n,\la}(x)=\sum_T x^T,
\]
where the sum is over all symplectic tableaux of shape~$\lambda$ 
and $x^T$ is again given by
\eqref{Eq_xT}.

\begin{lemma}[Principal specialisation --- dual form]\label{Lemma_Cn}
For $\la\in (r^n)$, we have
\begin{subequations}
\begin{multline}\label{Eq_SpecCnb}
\symp_{2n,\la}(q,q^2,\dots,q^n)=q^{n(\la)-n\abs{\la}}
\prod_{i=1}^r \frac{1-q^{n-\la_i'+i}}{1-q^{n+i}} 
\qbin{2n+2r}{\la'_i+r-i} \qbin{2n+2r}{r-i}^{-1} \\
\times \prod_{1\leq i<j\leq r}
\frac{1-q^{\la_i'-\la_j'+j-i}}{1-q^{j-i}}\cdot
\frac{1-q^{2n-\la_i'-\la_j'+i+j}}{1-q^{2n+i+j}}
\end{multline}
and
\begin{multline}
\symp_{2n,\la}(q^{1/2},q^{3/2},\dots,q^{n-1/2}) \\
=q^{n(\la)-(n-1/2)\abs{\la}}
\prod_{i=1}^r \frac{1-q^{2(n-\la_i'+i)}}{1-q^{2(n+i)}} 
\qbin{2n+2r}{\la'_i+r-i} \qbin{2n+2r}{r-i}^{-1} \\
\times \prod_{1\leq i<j\leq r}
\frac{1-q^{\la_i'-\la_j'+j-i}}{1-q^{j-i}}\cdot
\frac{1-q^{2n-\la_i'-\la_j'+i+j}}{1-q^{2n+i+j}}.
\end{multline}
\end{subequations}
\end{lemma}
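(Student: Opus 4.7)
The plan is to follow the template of the proof of Lemma~\ref{Lem_Bn-PS}, substituting the Cartan data of $\mathrm{C}_n=\symp_{2n}(\Complex)$ for that of $\mathrm{B}_n$. I begin by realising the simple roots of $\symp_{2n}(\Complex)$ as $\{\epsilon_1-\epsilon_2,\dots,\epsilon_{n-1}-\epsilon_n,2\epsilon_n\}$, so that the fundamental weights are $\omega_i=\epsilon_1+\cdots+\epsilon_i$ and the positive roots split into the $n$ long roots $\{2\epsilon_i\}$ and the $n(n-1)$ short roots $\{\epsilon_i\pm\epsilon_j:i<j\}$. Under $\phi(\eup^{-\omega_i})=x_1\cdots x_i$, a direct check gives $F(x_i)=q^{n-i+1}$ and $F^{\vee}(x_i)=q^{n-i+1/2}$, which produces the two specialisations appearing in the lemma.

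Arguing as in the derivation of \eqref{Eq_so-odd-PS-check} and \eqref{Eq_so-odd-PS}, I then obtain
\begin{equation*}
\symp_{2n,\la}(q,q^2,\dots,q^n)=q^{n(\la)-n\abs{\la}}\,\dim_q V(\la)
\end{equation*}
and
\begin{equation*}
\symp_{2n,\la}(q^{1/2},q^{3/2},\dots,q^{n-1/2})=q^{n(\la)-(n-1/2)\abs{\la}}\,\dim_q^{\vee}V(\la).
\end{equation*}
Expanding via Lemma~\ref{Lem_qdim} and separating long- from short-root contributions shows that the short roots contribute in both cases
$\prod_{i<j}(1-q^{\la_i-\la_j+j-i})(1-q^{\la_i+\la_j+2n-i-j+2})/((1-q^{j-i})(1-q^{2n-i-j+2}))$, while the long roots contribute $\prod_i(1-q^{2(\la_i+n-i+1)})/(1-q^{2(n-i+1)})$ in the first specialisation and $\prod_i(1-q^{\la_i+n-i+1})/(1-q^{n-i+1})$ in the second.

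To convert these length-$n$ products into the length-$r$ products in $\la'$ appearing in the lemma, I encode them through the $C$-functions of \eqref{Eq_C-def} via \eqref{Eq_C-explicit}, apply the conjugation identities \eqref{Eq_C-conjugation} to pass from $\la$ to $\la'$, re-expand through \eqref{Eq_C-explicit} with $n\mapsto r$, and simplify via \eqref{Eq_simp} (with $n\mapsto r$ for the ``$-$''-piece and $n\mapsto 2n+r$ for the ``$+$''-piece). This produces the $q$-binomial coefficients $\qbin{2n+2r}{\la_i'+r-i}\qbin{2n+2r}{r-i}^{-1}$ together with the residual $\la_i'$-dependent factors claimed in the statement.

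The main obstacle is purely the bookkeeping of the monomial prefactors introduced at each step: the conjugation of $C_\la^0$ and $C_\la^{+}$ respectively contribute factors $(-a)^{\abs{\la}}q^{n(\la)-n(\la')}$ and $(-aq)^{\abs{\la}}q^{3n(\la)-3n(\la')}$, while \eqref{Eq_simp} adds a further $(-q^n)^{-\abs{\la}}q^{n(\la)-n(\la')}$. These must combine with the initial $q^{n(\la)-n\abs{\la}}$ (respectively $q^{n(\la)-(n-1/2)\abs{\la}}$) to reproduce exactly the prefactor claimed in the lemma, with no stray signs or fractional powers of $q$. Verifying this cancellation is the only delicate step; it proceeds in direct analogy with the calculation carried out for $\mathrm{B}_n$, only with the shifts in the arguments of the $C$-functions displaced by one unit to reflect the difference between the Weyl vectors of $\mathrm{C}_n$ and $\mathrm{B}_n$.
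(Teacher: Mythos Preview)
Your proposal is correct and follows essentially the same approach as the paper: the paper writes out the $q$-dimension formulas \eqref{Eq_sp-PS} via Lemma~\ref{Lem_qdim} using exactly the Cartan data you list, and then simply says ``The rest of the proof is analogous to that of Lemma~\ref{Lem_Bn-PS}; we omit the details.'' Your outline in fact supplies more detail than the paper does, correctly identifying the $C$-function encoding, the conjugation step \eqref{Eq_C-conjugation}, and the use of \eqref{Eq_simp} as the mechanism for passing to the dual (length-$r$) form.
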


\begin{proof}
If we take the simple roots to be 
$\{\alpha_1,\dots,\alpha_n\}=
\{\epsilon_1-\epsilon_2,\dots,\epsilon_{n-1}-\epsilon_n,2\epsilon_n\}$
(see \cite{Humphreys78}), then
\begin{align*}
\{\omega_1,\dots,\omega_n\}&=\{\epsilon_1,\epsilon_1+\epsilon_2,\dots,
\epsilon_1+\cdots+\epsilon_n\}, \\
\{\alpha\in\Phi: \alpha>0\}&=
\{2\epsilon_i: 1\leq i\leq n\}\cup\{\epsilon_i\pm \epsilon_j: 
1\leq i<j\leq n\}.
\end{align*}
From Lemma~\ref{Lem_qdim}, it then follows that
\begin{subequations}\label{Eq_sp-PS}
\begin{align}
\symp_{2n,\la}(q,q^2,\dots,q^n)&=q^{n(\la)-n\abs{\la}} \dim_q V(\la) \\
&=q^{n(\la)-n\abs{\la}} 
\prod_{i=1}^n \frac{1-q^{2(\la_i+n-i+1)}}{1-q^{2(n-i+1)}} \notag  \\
& \quad 
\kern1cm
\times
\prod_{1\leq i<j\leq n}\frac{1-q^{\la_i-\la_j+j-i}}{1-q^{j-i}}\cdot
\frac{1-q^{\la_i+\la_j+2n-i-j+2}}{1-q^{2n-i-j+2}}  \notag
\end{align}
and
\begin{align}
\symp_{2n,\la}(q^{1/2},q^{3/2},\dots,q^{n-1/2})
&=q^{n(\la)-(n-1/2)\abs{\la}} \dim_q^{\vee} V(\Lambda) \\
&=q^{n(\la)-(n-1/2)\abs{\la}} 
\prod_{i=1}^n \frac{1-q^{\la_i+n-i+1}}{1-q^{n-i+1}} \notag \\
& \quad 
\kern1cm
\times
\prod_{1\leq i<j\leq n}\frac{1-q^{\la_i-\la_j+j-i}}{1-q^{j-i}}\cdot
\frac{1-q^{\la_i+\la_j+2n-i-j+2}}{1-q^{2n-i-j+2}}. \notag
\end{align}
\end{subequations}
The rest of the proof is analogous to that of Lemma~\ref{Lem_Bn-PS}; 
we omit the details.
\end{proof}

\subsection{The even-orthogonal Schur functions}

Let a $\mathrm{D}_n$ partition be a weakly decreasing sequence
$(\la_1,\dots,\la_n)$ 
such that each $\la_i\in\Z$ or each $\la_i\in\Z+1/2$, and such that 
$\la_{n-1}\geq \abs{\la_n}$.
If $\la$ is a $\D_n$ partition then so is 
$\bar{\la}:=(\la_1,\dots,\la_{n-1},-\la_n)$.

For $x=(x_1,\dots,x_n)$ and $\la$ a $\mathrm{D}_n$ partition, the
\emph{even-orthogonal Schur functions} are defined by
\begin{equation}\label{Eq_so2n}
\so_{2n,\la}(x):=\sum_{\sigma\in\{\pm 1\}} 
\frac{\det_{1\leq i,j\leq n}\big(\sigma x_i^{\la_j+n-j}
+x_i^{-(\la_j+n-j)}\big)}
{\det_{1\leq i,j\leq n}\big(x_i^{n-j}+x_i^{-(n-j)}\big)}.
\end{equation}
We note that $\so_{2n,\bar{\la}}(x)=\so_{2n,\la}(\bar{x})$,
where $\bar{x}:=(x_1,\dots,x_{n-1},x_n^{-1})$.
Assuming $\mathfrak{g}=\so_{2n}(\Complex)$, we have
\[
\phi\big(\ch V(\la)\big)=\so_{2n,\la}(x),
\]
where 
\[
\phi(\eup^{-\omega_i})=
\begin{cases} x_1\cdots x_i, & \text{for $1\leq i\leq n-2$}, \\
(x_1\cdots x_{n-1}x_n^{-1})^{1/2}, & \text{for $i=n-1$}, \\[1pt]
(x_1\cdots x_n)^{1/2}, & \text{for $i=n$},
\end{cases}
\]
and
\[
P_{+}\ni
\la=(\la_1-\la_2)\omega_1+\cdots+(\la_{n-1}-\la_n)\omega_{n-1}+
(\la_{n-1}+\la_n)\omega_n.
\]

For our purposes it is not enough to consider $\so_{2n,\la}(x)$;
we also need the closely related \emph{even-orthogonal characters}
(cf.\ \cite{KT87})
\begin{equation}\label{Eq_o2n}
\ortho_{2n,\la}(x)=u_{\la}\,
\frac{\det_{1\leq i,j\leq n}\big(x_i^{\la_j+n-j}
+x_i^{-(\la_j+n-j)}\big)}
{\det_{1\leq i,j\leq n}\big(x_i^{n-j}+x_i^{-(n-j)}\big)},
\end{equation}
where $\la$ is a partition or half-partition and 
$u_{\la}=1$ if $l(\la)<n$ and $u_{\la}=2$ if $l(\la)=n$.
Note that
\begin{equation}\label{Eq_o-so}
\ortho_{2n,\la}(x)=\begin{cases}
\so_{2n,\la}(x), & \text{if $l(\la)<n$}, \\
\so_{2n,\la}(x)+\so_{2n,\bar{\la}}(x), & \text{if $l(\la)=n$}.
\end{cases}
\end{equation}

Also the even-orthogonal characters can be expressed in terms of a tableau
sum, see, e.g., \cite{ProcAK,FK97}.
We will however not define these tableaux here and instead restrict our 
attention to the simpler ``even Sundaram tableaux'' of \cite{FK97}.
An \emph{even Sundaram tableau} is a semi-standard Young tableau on the 
alphabet $1<\bar{1}<2<\bar{2}<\cdots<n<\bar{n}<\infty$ such that all entries
in row $k$ are at least $\bar{k}$, with the exception that $\infty$ may 
occur multiple times in each column but at most once in each row.
Note that the only difference with the earlier definition of Sundaram 
tableaux is that entries in row $k$ have to be at least $\bar{k}$ instead
of $k$. 
This implies that $1$ cannot actually occur in an even Sundaram tableaux.
Due to the absence of the letter $1$, it is not known how to assign monomials
to even Sundaram tableaux so that they generate $\ortho_{2n,\la}(x)$.
It is however shown in \cite{FK97} that $\ortho_{2n,\la}(1^n)$ correctly
counts the number of even Sundaram tableaux of shape $\la$.

\begin{lemma}
For $\la$ a partition contained in $(r^n)$, we have
\begin{multline}\label{Eq_ortho-PS-dual}
\ortho_{2n,\la}(q^{1/2},q^{3/2},\dots,q^{n-1/2}) \\
=q^{n(\la)-(n-1/2)\abs{\la}} 
\prod_{i=1}^r \qbin{2n+2r-2}{\la'_i+r-i} \qbin{2n+2r-2}{r-i}^{-1} \\ 
\times
\prod_{1\leq i<j\leq r} \frac{1-q^{\la_i'-\la_j'+j-i}}{1-q^{j-i}}\cdot
\frac{1-q^{2n-\la_i'-\la_j'+i+j-2}}{1-q^{2n+i+j-2}}.
\end{multline}
\end{lemma}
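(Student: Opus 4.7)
My plan is to follow the template of the proofs of Lemmas~\ref{Lem_Bn-PS} and~\ref{Lemma_Cn}, in two stages: first establish a ``natural'' form of the principal specialisation indexed by $\la$, and then convert it to the displayed ``dual'' form indexed by $\la'$ via the conjugation identities \eqref{Eq_C-conjugation} together with \eqref{Eq_C-explicit}.

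A subtlety absent in the previous two lemmas is that, for $\mathrm{D}_n$, the canonical specialisation map $F$ (which coincides with $F^{\vee}$ since $\mathrm{D}_n$ is simply laced) yields $x_i = q^{n-i}$ rather than the $x_i = q^{n-i+1/2}$ that we need here. Thus the natural form cannot be read off Lemma~\ref{Lem_qdim} directly, and must instead be derived from the determinantal definition \eqref{Eq_o2n}. Writing $\ortho_{2n,\la}(x) = u_{\la}\,N(\la)/N(\emptyset)$ with
\[
N(\la) := \det_{1 \le i,j \le n}\bigl(x_i^{\la_j+n-j}+x_i^{-(\la_j+n-j)}\bigr),
\]
I exploit column multilinearity to write
\[
N(\la) = \sum_{\epsilon \in \{\pm 1\}^n}\det_{1 \le i,j \le n}\bigl(x_i^{\epsilon_j(\la_j+n-j)}\bigr).
\]
At $x_i = q^{n-i+1/2}$ each inner determinant is a generalised Vandermonde that factors explicitly (pulling out $\prod_j q^{\epsilon_j(\la_j+n-j)/2}$ and reducing to the classical $q$-Vandermonde), and summing over~$\epsilon$ recombines into a closed product formula. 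The factor $u_{\la}$ is precisely what restores symmetry between the two regimes $l(\la) < n$ and $l(\la) = n$: in the former, half the sign sectors yield zero (the column $j = n$ becomes trivial since $\la_n+0=0$), giving $u_{\la} = 1$; in the latter, all sectors contribute in pairs, giving $u_{\la} = 2$.

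Stage~2 rewrites the natural form using the $C$-functions of \eqref{Eq_C-def} and \eqref{Eq_C-explicit}, applies \eqref{Eq_C-conjugation} to replace $\la$ by $\la'$, and finally re-expands via \eqref{Eq_C-explicit} with $n \mapsto r$ and simplifies using \eqref{Eq_simp} with $n$ there set to $2n+r-2$. This is essentially identical to the corresponding manipulation in the proof of Lemma~\ref{Lem_Bn-PS}; the only simplification is that the $\epsilon_i$-root contributions (present for $\mathrm{B}_n$) are absent for $\mathrm{D}_n$, so the natural form carries no $\prod_i(1\pm q^{\la_i+n-i+1/2})$ factor and the resulting dual form has no analogue of the $\prod_i(1+q^{n-\la'_i+i-1/2})$-style correction seen in \eqref{Eq_so-odd-PS-2}.

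The main obstacle is Stage~1: the direct determinantal evaluation and the bookkeeping needed to combine the $2^n$ sign sectors into a product formula while correctly tracking the factor $u_{\la}$ in both cases $l(\la) < n$ and $l(\la) = n$. Once Stage~1 is in hand, Stage~2 is mechanical.
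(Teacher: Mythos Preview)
Your two-stage strategy matches the paper's, but your execution of Stage~1 and your prediction of its outcome are both off. The paper does not expand $N(\la)$ by multilinearity; instead it observes that after specialising $x_i=q^{n-i+1/2}$ in \eqref{Eq_o2n}, both numerator and denominator become determinants of the exact shape handled by the already-proved identity \eqref{Eq_Bn-denom-2} (with $x_j$ there set to $q^{\la_j+n-j}$ and $q^{n-j}$ respectively). This yields \eqref{Eq_ortho-PS} in one step, and that natural form \emph{does} carry the factor $u_\la\prod_{i=1}^n(1+q^{\la_i+n-i})/(1+q^{n-i})$, contrary to your claim. Your root-system heuristic (no $\epsilon_i$-roots in $\mathrm{D}_n$, hence no single-product factor) would be valid for the canonical $F$-specialisation $x_i=q^{n-i}$, but not for the $\mathrm{B}_n$-flavoured specialisation $x_i=q^{n-i+1/2}$ used here; the governing determinant identity is \eqref{Eq_Bn-denom-2}, whose $\prod_i(1+x_i)$ factor is unavoidable. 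Your account of how $u_\la$ emerges from the sign-sector expansion is also incorrect: none of the $2^n$ generalised Vandermonde determinants in your expansion vanish; when $\la_n=0$ the two choices $\epsilon_n=\pm1$ give identical (nonzero) columns, not zero.

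Stage~2 then proceeds exactly as in Lemma~\ref{Lem_Bn-PS}, and the reason the dual form ends up with no extra single product (which you correctly noticed) is that $u_\la$ and the $i=n$ term $(1+q^{\la_n})/2$ of the single product combine uniformly across the cases $\la_n=0$ and $\la_n>0$ before the $C$-function conjugation. So your plan is salvageable once the natural form is corrected, but as written it would lead you to the wrong intermediate expression and a spurious simplification in Stage~2.
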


There is a similar result for $\ortho_{2n,\la}(1,q,\dots,q^{n-1})$,
but this is not needed.

\begin{proof}
If we specialise $x_i=q^{n-i+1/2}$ in \eqref{Eq_o2n}, with $1\leq i\leq n$,
and then use the determinant evaluation \eqref{Eq_Bn-denom-2} with
$x_j=q^{\la_j+n-j}$ or $x_j=q^{n-j}$, we obtain
\begin{multline}\label{Eq_ortho-PS}
\ortho_{2n,\la}(q^{1/2},q^{3/2},\dots,q^{n-1/2}) 
=u_{\la} \, q^{n(\la)-(n-1/2)\abs{\la}} 
\prod_{i=1}^n \frac{1+q^{\la_i+n-i}}{1+q^{n-i}} \\ \times
\prod_{1\leq i<j\leq n}\frac{1-q^{\la_i-\la_j+j-i}}{1-q^{j-i}}\cdot
\frac{1-q^{\la_i+\la_j+2n-i-j}}{1-q^{2n-i-j}}. 
\end{multline}
The rest of the proof follows that of Lemma~\ref{Lem_Bn-PS}.
\end{proof}

For later reference we note that it follows in much the same way from
\eqref{Eq_Bn-denom} and \eqref{Eq_Bn-denom-2} that
\begin{multline}\label{Eq_so-even-PS-wrong}
\so_{2n,\la}(q^{1/2},q^{3/2},\dots,q^{n-1/2}) \\
=q^{n(\la)-(n-1/2)\abs{\la}} 
\bigg( \prod_{i=1}^n \frac{1+q^{\la_i+n-i}}{1+q^{n-i}} +
\prod_{i=1}^n \frac{1-q^{\la_i+n-i}}{1+q^{n-i}} \bigg) \\
\times
\prod_{1\leq i<j\leq n}\frac{1-q^{\la_i-\la_j+j-i}}{1-q^{j-i}}\cdot
\frac{1-q^{\la_i+\la_j+2n-i-j}}{1-q^{2n-i-j}}. \qedhere
\end{multline}

\section{Okada-type formulas}\label{sec:Okada}

With the exception of type $\mathrm{A}_{r-1}$, our proofs of the discrete 
analogues of Macdonald--Mehta integrals for $\gamma=1/2$ given in
the next section rely on
formulas for the multiplication of Schur functions of 
type~$\mathfrak{g}$ indexed by partitions of rectangular shape.
Such formulas have been given by Okada in \cite{Okada98}.
We use several of his formulas, but we also require additional
ones. In the subsection below, we list all these results,
and we present the (principal) specialisations of these
formulas that we actually need. Subsection~\ref{sec:OkadProof}
provides the proofs of the new results not contained in
\cite{Okada98}. These proofs heavily rely on ``preparatory results"
from \cite{Okada98}.

\subsection{Main results}
Our first result applies to $\mathfrak{g}=\so_{2n+1}(\Complex)$.
Let $\so_{2n+1,\la}^{-}(x):=\so_{2n+1,\la}(x)$.

\begin{theorem}\label{Thm_sososo}
Let $r$ be a non-negative integer, $\varepsilon\in\{-1,1\}$ and
$s:=\frac{1}{2}r$. Then
\begin{equation}\label{Eq_sososo}
\sum_{\la\subseteq (r^n)}  \varepsilon^{\abs{\la}} 
\so_{2n+1,\la}(\varepsilon x) 
=\so_{2n+1,(s^n)}(x)\so^{\sigma}_{2n+1,(s^n)}(x),
\end{equation}
where the sum on the left is over partitions, and $\sigma=-$ if 
$\varepsilon=1$ and $\sigma=+$ if $\varepsilon=-1$.
\end{theorem}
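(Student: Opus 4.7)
The plan is to follow the minor-summation approach pioneered by Okada \cite{Okada98}. Using the bialternant formulas \eqref{Eq_so2n1-def} and \eqref{Eq_soplus-def} for $\so_{2n+1,\la}$ and $\so^+_{2n+1,\la}$, together with the identity $(-1)^{\abs{\la}}\so_{2n+1,\la}(-x)=\so^+_{2n+1,\la}(x)$ from \eqref{Eq_soplusso} (valid for $\la$ a partition), the two cases $\varepsilon=\pm 1$ of \eqref{Eq_sososo} can be uniformly rewritten, after clearing a common Weyl denominator, as the single identity
\begin{equation*}
\sum_{\la\subseteq (r^n)}\det_{1\le i,j\le n}\bigl[\phi^\eta_{\la_j+n-j}(x_i)\bigr]
=\so_{2n+1,(s^n)}(x)\cdot\det_{1\le i,j\le n}\bigl[\phi^\eta_{s+n-j}(x_i)\bigr],
\end{equation*}
where $\phi^\eta_k(x):=x^{k+1/2}+\eta\, x^{-(k+1/2)}$, with $\eta=-$ for $\varepsilon=+1$ and $\eta=+$ for $\varepsilon=-1$.

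The substitution $\mu_j=\la_j+n-j$ then reindexes the left-hand sum as a sum over $n$-element subsets $I=\{\mu_1>\cdots>\mu_n\}$ of $\{0,1,\dots,r+n-1\}$, identifying it with the sum of all maximal minors of the $n\times(r+n)$ matrix $M^\eta:=(\phi^\eta_k(x_i))_{1\le i\le n,\,0\le k\le r+n-1}$. One then applies the Ishikawa--Wakayama minor-summation formula with skew-symmetric kernel $T$ of size $r+n$ defined by $T_{ab}=\sgn(b-a)$, whose principal Pfaffians of all sizes equal $1$; for $n$ even this gives directly
\begin{equation*}
\sum_{|I|=n}\det(M^\eta_I) = \Pf\bigl(M^\eta\, T\,(M^\eta)^t\bigr),
\end{equation*}
while for $n$ odd one uses the standard bordered variant (adjoining a row of ones to $M^\eta$ and extending $T$ accordingly).

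The final step is to evaluate this Pfaffian in closed form. Its entries are antisymmetric bilinear sums $\sum_{0\le b<a\le r+n-1}\bigl(\phi^\eta_a(x_i)\phi^\eta_b(x_j)-\phi^\eta_a(x_j)\phi^\eta_b(x_i)\bigr)$, which admit a product-to-sum telescoping bringing them into closed form. A Desnanot--Jacobi-style factorisation of the resulting Pfaffian then expresses it as a product of two alternants, which one identifies with $\det_{i,j}[\phi^-_{s+n-j}(x_i)]$ and $\det_{i,j}[\phi^\eta_{s+n-j}(x_i)]$ divided by the Weyl denominator $\det_{i,j}[\phi^-_{n-j}(x_i)]$; this matches the right-hand side of the target identity. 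The hard part is precisely this last Pfaffian factorisation with its careful sign bookkeeping: the $\eta=-$ case runs close to identities established in \cite{Okada98}, whereas the $\eta=+$ case (which encodes the $\varepsilon=-1$ instance of \eqref{Eq_sososo}) requires new arguments, to be carried out in Subsection~\ref{sec:OkadProof} using Okada's Pfaffian machinery adapted to the present sign pattern.
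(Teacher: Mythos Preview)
Your proposal follows essentially the same route as the paper: rewrite the sum via \eqref{Eq_soplusso} as $\sum_\la \so^\sigma_{2n+1,\la}(x)$, reindex as a sum of maximal minors of an $n\times(r+n)$ matrix, apply the Ishikawa--Wakayama minor summation formula, and then factor the resulting Pfaffian using Okada's machinery. Two points of difference are worth noting. First, for odd $n$ the paper does not use a bordered minor-summation variant; instead it observes (via \eqref{Eq_reduce}) that multiplying by $x_n^r$ and sending $x_n\to 0$ reduces the identity for $n$ to that for $n-1$, so it suffices to treat even $n$ only. This is cleaner than bordering, which would introduce extra entries to track. Second, the paper does not evaluate the Pfaffian by telescoping and a Desnanot--Jacobi-style argument; rather, it first computes the entries of $B=MAM^t$ in closed form (a geometric-series calculation) and recognises them as a scalar multiple of Okada's matrix $Q(x;a,b)$ of \eqref{Eq_Q-def}, so that Okada's Pfaffian evaluation \eqref{Eq_QWW} applies directly to give the product of two alternants over the $\mathrm{B}_n$ Vandermonde. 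Your description of this last step is a bit loose (Desnanot--Jacobi is a determinant identity, not a Pfaffian one), but since you conclude by invoking Okada's Pfaffian machinery, you are pointing at the right tool.
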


For $\varepsilon=1$ this is (a special case of) Okada's
\cite[Theorem~2.5(1)]{Okada98}.

Later we require \eqref{Eq_sososo} in principally specialised form
as follows from \eqref{Eq_so-odd-PS-check}, \eqref{Eq_so-odd-PS} and 
\eqref{Eq_so-odd-min-PS} for $\la=(s^n)$. 

\begin{corollary}\label{Cor_Bn-box}
For $r$ a non-negative integer and $\varepsilon \in\{-1,1\}$, we have
\begin{subequations}
\begin{equation}\label{Eq_B-sum-a}
\sum_{\la\subseteq (r^n)} \so_{2n+1,\la}(q,q^2,\dots,q^n)=
q^{-r\binom{n+1}{2}} \frac{(q^{r+1};q^2)_n}{(q;q^2)_n}
\prod_{i,j=1}^n \frac{1-q^{i+j+r-1}}{1-q^{i+j-1}}
\end{equation}
and
\begin{multline}\label{Eq_B-sum-b}
\sum_{\la\subseteq (r^n)} \varepsilon^{\abs{\la}}
\so_{2n+1,\la}(\varepsilon q^{1/2},\varepsilon
q^{3/2},\dots,\varepsilon q^{n-1/2}) \\
=q^{-rn^2/2}\, \frac{(q^{(r+1)/2};q)_n (\varepsilon
q^{(r+1)/2};q)_n}{(q^{1/2};q)_n(\varepsilon q^{1/2};q)_n}
\prod_{\substack{i,j=1 \\ i\neq j}}^n
\frac{1-q^{i+j+r-1}}{1-q^{i+j-1}},
\end{multline}
\end{subequations}
where $\la$ is summed over partitions.
\end{corollary}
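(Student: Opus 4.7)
The plan is to apply Theorem~\ref{Thm_sososo} at the principal specializations $x_i=q^i$ and $x_i=q^{i-1/2}$, and to evaluate the resulting right-hand sides via the dual principal-specialization formulas derived in Lemma~\ref{Lem_Bn-PS} and its proof.

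For \eqref{Eq_B-sum-a}, I would set $\varepsilon=1$ and $x_i=q^i$. Since $\sigma=-$, the right-hand side of \eqref{Eq_sososo} becomes $\bigl[\so_{2n+1,(s^n)}(q,q^2,\dots,q^n)\bigr]^{2}$ with $s=r/2$. I would evaluate the inner factor via \eqref{Eq_so-odd-PS-check}: although stated in the proof of Lemma~\ref{Lem_Bn-PS} for partitions, the identity comes straight from the Weyl $q$-dimension formula (Lemma~\ref{Lem_qdim}) and remains valid when $(s^n)$ is a half-partition (odd~$r$). Substituting $\la_1=\cdots=\la_n=s$ collapses the factor $\prod_{i<j}(1-q^{\la_i-\la_j+j-i})/(1-q^{j-i})$ to $1$, and what remains is a product of the form $\prod_i h(i)\cdot\prod_{i<j}g(i,j)$ with $g(i,j)=(1-q^{r+2n-i-j+1})/(1-q^{2n-i-j+1})$ symmetric in $i,j$ and $h(i)=g(i,i)$. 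Squaring and then applying the elementary identity
\[
\Bigl[\prod_{i=1}^n h(i)\Bigr]^{2}\Bigl[\prod_{1\le i<j\le n} g(i,j)\Bigr]^{2}=\prod_{i=1}^n h(i)\cdot\prod_{i,j=1}^n g(i,j),
\]
followed by the reindexing $(i,j)\mapsto(n-i+1,n-j+1)$ that sends $2n-i-j+1\mapsto i+j-1$, produces the closed form claimed in \eqref{Eq_B-sum-a} (with the prefactor $q^{-r\binom{n+1}{2}}$ arising by squaring the exponent $n(\la)-n\abs{\la}$ at $\la=(s^n)$).

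For \eqref{Eq_B-sum-b}, I would set $x_i=q^{i-1/2}$ and apply Theorem~\ref{Thm_sososo} for both $\varepsilon=\pm 1$. For $\varepsilon=1$ the right-hand side of \eqref{Eq_sososo} is $\bigl[\so_{2n+1,(s^n)}(q^{1/2},\dots,q^{n-1/2})\bigr]^{2}$, evaluated using the Weyl-dimension version \eqref{Eq_so-odd-PS} (again valid for half-partitions). For $\varepsilon=-1$ it is $\so_{2n+1,(s^n)}(q^{1/2},\dots,q^{n-1/2})\cdot\so^{+}_{2n+1,(s^n)}(q^{1/2},\dots,q^{n-1/2})$, evaluated by combining \eqref{Eq_so-odd-PS} with \eqref{Eq_so-odd-min-PS}. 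The single-index factor of $\so_{2n+1,(s^n)}$ telescopes to $(q^{(r+1)/2};q)_n/(q^{1/2};q)_n$, and that of $\so^{+}_{2n+1,(s^n)}$ to $(-q^{(r+1)/2};q)_n/(-q^{1/2};q)_n$, so in both sub-cases the single-index part is uniformly given by $(q^{(r+1)/2};q)_n(\varepsilon q^{(r+1)/2};q)_n/[(q^{1/2};q)_n(\varepsilon q^{1/2};q)_n]$. The double product is handled by the same symmetry trick as in part \eqref{Eq_B-sum-a} and yields $\prod_{i\ne j}(1-q^{i+j+r-1})/(1-q^{i+j-1})$; the prefactor $q^{-rn^{2}/2}$ arises from the exponent $n(\la)-(n-1/2)\abs{\la}$ at $\la=(s^n)$.

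The main obstacle is careful bookkeeping: tracking the $q^{n(\la)}$ and $q^{\abs{\la}}$ prefactors for $\la=(r/2)^n$, justifying the extension of \eqref{Eq_so-odd-PS-check} and \eqref{Eq_so-odd-PS} to half-partitions, and matching up the many $q$-Pochhammer factors. No new structural ingredient is needed beyond Theorem~\ref{Thm_sososo} and the principal-specialization formulas already established.
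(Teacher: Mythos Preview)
Your proposal is correct and follows essentially the same approach as the paper: the paper derives the corollary directly from Theorem~\ref{Thm_sososo} specialised via \eqref{Eq_so-odd-PS-check}, \eqref{Eq_so-odd-PS} and \eqref{Eq_so-odd-min-PS} at $\la=(s^n)$, which is exactly what you outline. Your write-up supplies the product manipulations (the squaring/symmetry trick and the reindexing $(i,j)\mapsto(n{-}i{+}1,n{-}j{+}1)$) that the paper leaves implicit, and your observation that \eqref{Eq_so-odd-PS-check} and \eqref{Eq_so-odd-PS} extend to half-partitions via the Weyl $q$-dimension formula is correct and matches the paper's tacit usage.
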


Letting $q$ tend to $1$ in \eqref{Eq_B-sum-a} (or the $\varepsilon=1$
case of \eqref{Eq_B-sum-b}) yields the unweighted enumeration of Sundaram
tableaux given in Theorem~\ref{Thm_Sundaram}.
Taking $\varepsilon=-1$ in \eqref{Eq_B-sum-b}, then using
\[
\frac{(q^{(r+1)/2};q)_n
(-q^{(r+1)/2};q)_n}{(q^{1/2};q)_n(-q^{1/2};q)_n}=
\frac{(q^{r+1};q^2)_n}{(q;q^2)_n},
\]
and finally letting $q^{1/2}$ tend to $\pm 1$ gives
\[
\sum_{T \subseteq (r^n)} (-1)^{\abs{T}} 
(\mp 1)^{\sum_{k=1}^n (m_k(T)+m_{\bar{k}}(T))}
=(\pm 1)^{rn} \prod_{i,j=1}^n \frac{i+j+r-1}{i+j-1}.
\]
Since 
\[
\abs{T}=m_{\infty}(T)+\sum_{k=1}^n \big(m_k(T)+m_{\bar{k}}(T)\big),
\]
this results in the two weighted enumerations of that theorem.

\medskip
Next we consider $\mathfrak{g}=\symp_{2n}(\Complex)$.

\begin{theorem}\label{Thm_spspso}
Let $r$ be a non-negative integer and 
$s:=\floor{\frac{1}{2}r}$, $t:=\ceiling{\frac{1}{2}r}$.
Then
\begin{equation}\label{Eq_spspso}
\sum_{\la\subseteq (r^n)}  \symp_{2n,\la}(x) 
=\symp_{2n,(s^n)}(x) \so_{2n+1,(t^n)}(x).
\end{equation}
\end{theorem}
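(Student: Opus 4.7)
The plan is to adapt Okada's approach for the $\varepsilon=1$ case of Theorem~\ref{Thm_sososo} to the symplectic setting. The first step is to clear denominators via the Weyl character formulas: multiplying both sides of \eqref{Eq_spspso} by the symplectic Weyl denominator $D_\C(x) := \det_{1\leq i,j\leq n}(x_i^{n-j+1}-x_i^{-(n-j+1)})$ and reparametrising via $\mu_j=\la_j+n-j+1$, the left-hand side becomes a sum of alternants
\[
\sum_{r+n\geq \mu_1 > \cdots > \mu_n \geq 1}
\det_{1\leq i,j\leq n}\big(x_i^{\mu_j}-x_i^{-\mu_j}\big).
\]
After multiplying by the additional odd-orthogonal Weyl denominator
$\det_{1\leq i,j\leq n}(x_i^{n-j+1/2}-x_i^{-(n-j+1/2)})$, the right-hand side becomes a product of two rectangular alternants, one of type $\C$ with integer exponent shift $s$ and one of type $\B$ with half-integer exponent shift $t$. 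The identity to be proved thus reduces to a single identity between a sum of $n\times n$ alternants and the product of two such alternants.

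Next I would invoke a minor-summation formula of Ishikawa--Wakayama/Okada type, as developed in \cite{Okada98}, to rewrite the sum of alternants on the left as either a single Pfaffian or as a product of two smaller alternants. The strictly decreasing sequences $\mu\in[1,r+n]^n$ are in bijection with $n$-subsets of $\{1,2,\dots,r+n\}$, or equivalently with $r$-subsets by complementation. Partitioning these $r$-subsets according to an interlacing/parity rule with respect to an auxiliary fixed sequence of length $s+t=r$ should yield the factorisation into $\symp_{2n,(s^n)}(x)\cdot\so_{2n+1,(t^n)}(x)$. The distinction between the even-$r$ case ($s=t$) and the odd-$r$ case ($s+1=t$) will emerge naturally as a parity choice in this splitting.

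The main obstacle will be locating and verifying the correct Pfaffian/determinantal factorisation that separates the integer-exponent alternant (giving $\symp_{2n}$) from the half-integer-exponent alternant (giving $\so_{2n+1}$). Okada's $\varepsilon=1$ case of Theorem~\ref{Thm_sososo} has the feature that both factors on the right are of $\B$-type, so only one Weyl denominator structure needs to be tracked, whereas here we must simultaneously disentangle the $\B$-type and $\C$-type exponent patterns, with their differing index shifts. I expect the proof to rely on the preparatory determinant identities in \cite[\S3]{Okada98} combined with one additional determinant manipulation that isolates the half-integer alternant corresponding to $\so_{2n+1,(t^n)}(x)$; the bookkeeping of signs and index shifts in the odd-$r$ case is the most delicate point.
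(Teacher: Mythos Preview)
Your direct approach via the minor-summation/Pfaffian machinery is viable --- it is exactly how the paper handles the companion Theorems~\ref{Thm_sososo} and~\ref{Thm_sososo-2}. After reducing to even $n$ via \eqref{Eq_reduce}, one applies Corollary~\ref{Cor_key} with $r\mapsto r+n$, $\varepsilon=1$ and $a=0$; the output is automatically a product of a $\C$-type alternant (coming from the factor $\det W(x;\varepsilon x^{r-2a+1})$) and a $\B$-type alternant (the second determinant in Corollary~\ref{Cor_key} always carries the half-integer shift). So the obstacle you anticipate --- disentangling the $\B$- and $\C$-type exponent patterns via complementation to $r$-subsets and an interlacing/parity rule --- does not arise: the Pfaffian evaluation of Theorem~\ref{Thm_Okada} performs the separation algebraically in one stroke, with no additional determinant manipulation and no combinatorial partition of subsets. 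Your proposed mechanism for the factorisation is not the one that actually operates.

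The paper's own proof of this particular theorem, however, takes a different and much shorter route. It invokes the spin-character factorisation
\[
\so_{2n+1,\lambda+1/2}(x)=\symp_{2n,\lambda}(x)\prod_{i=1}^n\big(x_i^{1/2}+x_i^{-1/2}\big),
\]
which converts the symplectic sum into a $\B$-type sum over half-partitions and thereby reduces Theorem~\ref{Thm_spspso} directly to Okada's \cite[Theorem~2.5(1)]{Okada98}. Your approach amounts to reproving Okada's result from scratch in the symplectic setting; the paper instead recycles it via this one-line identity. Both work, but the spin factorisation saves all of the Pfaffian bookkeeping.
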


This identity follows from \cite[Theorem~2.5(1)]{Okada98} by observing that
(see e.g.\ \cite[Proposition~A2.1(c)]{ProcAK})
$$
     \so_{2n+1,\lambda+1/2}(x)
     =
     \symp_{2n,\lambda}(x)\,
      \prod_{i=1}^n (x_i^{1/2} + x_i^{-1/2}),
$$
where $\lambda+1/2$ stands for $(\lambda_1+1/2, \dots, \lambda_n+1/2)$.
It is interesting to note that Proctor 
\cite[Lemma~4, equation for $A_{2n}(m\omega_r)$, case~$r=n$]{ProcAE} 
obtained this same sum from a specialised Schur function. 
(In representation-theoretic terms:
the restriction of an $\SL_{2n+1}(\Complex)$-character indexed by a 
rectangular shape to $\Sp_{2n}(\Complex)$ 
decomposes into the sum of symplectic characters indexed by all
shapes contained in that rectangle; see also
\cite[Equation~(3.4)]{Krattenthaler98}.) He used his result to prove the
(at the time conjectured) formula for the number of symmetric
self-complementary plane partitions contained in a given box.

Once again, use of \eqref{Eq_so-odd-PS-check} as well as \eqref{Eq_sp-PS}
yields our second corollary.

\begin{corollary}\label{Cor_Cn-box}
For $r$ a non-negative integer, we have
\begin{subequations}
\begin{equation}\label{Eq_Sp-heel}
\sum_{\la\subseteq (r^n)} \symp_{2n,\la}(q,q^2,\dots,q^n)=
q^{-r\binom{n+1}{2}} 
\prod_{i=1}^{n+1} \prod_{j=1}^n \frac{1-q^{i+j+r-1}}{1-q^{i+j-1}}
\end{equation}
and
\begin{equation}\label{Eq_Sp-half}
\sum_{\la\subseteq (r^n)}
\symp_{2n,\la}(q^{1/2},q^{3/2},\dots,q^{n-1/2}) 
=q^{-rn^2/2} \prod_{i=1}^{2n} \frac{1-q^{(i+r)/2}}{1-q^{i/2}}
\prod_{i=1}^n \prod_{j=1}^{n-1} \frac{1-q^{i+j+r}}{1-q^{i+j}}.
\end{equation}
\end{subequations}
\end{corollary}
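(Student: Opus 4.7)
The plan is to specialise Theorem~\ref{Thm_spspso} at $x=(q,q^2,\dots,q^n)$ for \eqref{Eq_Sp-heel} and at $x=(q^{1/2},q^{3/2},\dots,q^{n-1/2})$ for \eqref{Eq_Sp-half}, and then compute each resulting factor using the principal specialisation formulas from Section~\ref{Sec_Weyl}. Writing $s=\floor{r/2}$ and $t=\ceiling{r/2}$ (so that $s+t=r$), Theorem~\ref{Thm_spspso} reduces the left-hand side to $\symp_{2n,(s^n)}(x)\cdot\so_{2n+1,(t^n)}(x)$.

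Next, I would evaluate each factor via the relevant equation of \eqref{Eq_sp-PS} for $\symp_{2n,(s^n)}(x)$ and either \eqref{Eq_so-odd-PS-check} or \eqref{Eq_so-odd-PS} for $\so_{2n+1,(t^n)}(x)$. The decisive simplification is that both $(s^n)$ and $(t^n)$ are rectangular partitions, so the Vandermonde-type factors $\prod_{1\leq i<j\leq n}(1-q^{\la_i-\la_j+j-i})/(1-q^{j-i})$ collapse to $1$. What survives is, for each character, a prefactor, a single-index product, and a double product indexed by pairs $i<j$.

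The two prefactors combine as $q^{-(s+t)\binom{n+1}{2}}=q^{-r\binom{n+1}{2}}$ in the first case and $q^{-(s+t)n^2/2}=q^{-rn^2/2}$ in the second. For the single-index products, the key observation is a parity interleaving: for $x=(q,q^2,\dots,q^n)$ the symplectic side contributes numerator exponents in $\{2s+2,2s+4,\dots,2s+2n\}$ while the orthogonal side contributes $\{2t+1,2t+3,\dots,2t+2n-1\}$. Whether $r$ is even (so $2s=2t=r$) or odd (so $\{2s,2t\}=\{r-1,r+1\}$), these two sets are disjoint and their union equals exactly $\{r+1,r+2,\dots,r+2n\}$. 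Thus the single products interleave into $\prod_{k=1}^{2n}(1-q^{r+k})/(1-q^k)$. The half-integer case \eqref{Eq_Sp-half} works identically, producing $\prod_{k=1}^{2n}(1-q^{(r+k)/2})/(1-q^{k/2})$, which is precisely the explicit factor appearing there.

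A parallel parity argument shows the two double products combine into
\[
\prod_{1\leq i<j\leq n}\frac{(1-q^{r+2n-i-j+1})(1-q^{r+2n-i-j+2})}{(1-q^{2n-i-j+1})(1-q^{2n-i-j+2})}.
\]
The final step is a routine reindexing: I would compare the multiplicity of each exponent $\ell$ in the total combined product (one copy from the single product plus a count $c_n(2n-\ell+1)+c_n(2n-\ell+2)$ from the double product, where $c_n(k)$ is the number of pairs $1\leq i<j\leq n$ with $i+j=k$) against the multiplicities in the target product $\prod_{i=1}^{n+1}\prod_{j=1}^{n}(1-q^{i+j+r-1})/(1-q^{i+j-1})$ and check that they agree. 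The main (and only real) obstacle is the parity bookkeeping that welds together the even-exponent contributions from $\symp_{2n}$ with the odd-exponent contributions from $\so_{2n+1}$; once this is carried out for both parities of $r$, the remaining manipulations are purely combinatorial and elementary.
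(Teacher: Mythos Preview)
Your approach is correct and is exactly the one the paper uses: specialise Theorem~\ref{Thm_spspso} and evaluate the two rectangular-shape factors via \eqref{Eq_sp-PS} together with \eqref{Eq_so-odd-PS-check} or \eqref{Eq_so-odd-PS}. The paper merely records this in one line without spelling out the parity interleaving and reindexing you describe, but those details are accurate and complete the argument as intended.
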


Letting $q^{1/2}$ tend to $\pm 1$ in \eqref{Eq_Sp-half} implies
two counting formulas for symplectic tableaux.

\begin{theorem}
The number of symplectic tableaux of height at most $n$ and width at most
$r$ is given by
\begin{equation}\label{Eq_Cn-in-box}
\prod_{i=1}^{n+1} \prod_{j=1}^n \frac{i+j+r-1}{i+j-1}
\end{equation}
and the number of such tableaux weighted by $(-1)^{\abs{T}}$ is
\[
(-1)^{rn} \prod_{i=1}^n \frac{i+\floor{r/2}}{i}
\prod_{i=1}^n \prod_{j=1}^{n-1} \frac{i+j+r}{i+j}.
\]
\end{theorem}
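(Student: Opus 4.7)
Since by \eqref{Eq_xT} the symplectic character satisfies $\symp_{2n,\lambda}(x)=\sum_T x^T$ with $x^T=\prod_{k=1}^n x_k^{m_k(T)-m_{\bar k}(T)}$, setting $x=(1,\dots,1)$ gives the number of symplectic tableaux of shape~$\lambda$, while setting $x=(-1,\dots,-1)$ gives $\sum_T(-1)^{|T|}$. Indeed, every entry of a symplectic tableau is some $k$ or $\bar k$, so $|T|=\sum_k(m_k(T)+m_{\bar k}(T))$ has the same parity as $\sum_k(m_k(T)-m_{\bar k}(T))$. Hence both enumerations in the theorem are precisely the values of $\sum_{\lambda\subseteq(r^n)}\symp_{2n,\lambda}(x)$ at $x=(\pm1,\dots,\pm1)$, and both can be read off from Corollary~\ref{Cor_Cn-box}.

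For the unweighted count I would take $q\to1$ in \eqref{Eq_Sp-heel}: the prefactor $q^{-r\binom{n+1}{2}}$ becomes~$1$ and each ratio $(1-q^{i+j+r-1})/(1-q^{i+j-1})$ tends to $(i+j+r-1)/(i+j-1)$, producing \eqref{Eq_Cn-in-box} at once.

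For the signed count I would take $q^{1/2}\to-1$ in \eqref{Eq_Sp-half}, so that every $x_i=q^{i-1/2}$ tends to $-1$. The prefactor $q^{-rn^2/2}=(q^{1/2})^{-rn^2}$ becomes $(-1)^{rn^2}=(-1)^{rn}$, and the double product $\prod_{i=1}^n\prod_{j=1}^{n-1}(1-q^{i+j+r})/(1-q^{i+j})$ is regular at $q=1$ and contributes $\prod_{i=1}^n\prod_{j=1}^{n-1}(i+j+r)/(i+j)$. Setting $p=q^{1/2}$, the remaining product telescopes into a Gaussian binomial,
\[
\prod_{i=1}^{2n}\frac{1-p^{i+r}}{1-p^i}=\qbin{2n+r}{r}_p,
\]
which by the classical evaluation of $q$-binomials at $q=-1$ equals $\binom{n+\lfloor r/2\rfloor}{\lfloor r/2\rfloor}=\prod_{i=1}^n(i+\lfloor r/2\rfloor)/i$; the potentially vanishing case (top parameter even, bottom odd) does not arise since $2n+r$ and $r$ have the same parity. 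Multiplying the three contributions produces the claimed formula.

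The only delicate step is this final limit: the individual factors of the first product are of the form $0/0$ at $p=-1$, but recognising the telescoping product as a Gaussian binomial reduces the evaluation to a single classical formula and avoids any need for higher-order expansion.
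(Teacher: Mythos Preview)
Your proof is correct and follows essentially the same route as the paper, which simply says to let $q^{1/2}\to\pm1$ in \eqref{Eq_Sp-half}. Your choice of \eqref{Eq_Sp-heel} rather than \eqref{Eq_Sp-half} for the unweighted count is a harmless variation that in fact lands directly on the product form stated in the theorem, and your handling of the $q^{1/2}\to-1$ limit via the Gaussian binomial $\qbin{2n+r}{r}_p$ and its evaluation at $p=-1$ makes explicit a step the paper leaves to the reader.
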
 

For example, when $r=n=2$ there are
$(3\cdot 4^2\cdot5^2\cdot 6)/(1\cdot 2^2 \cdot 3^2\cdot 4)= 50$
symplectic tableaux, with the following break-down according to shape

\medskip

\begin{center}
\begin{tikzpicture}[scale=0.3]
\draw[fill] (0,-0.5) circle (0.08cm);
\draw (1.5,0) rectangle (2.5,-1);
\draw (4,0) rectangle (5,-2);
\draw (4,-1) -- (5,-1);
\draw (6.5,0) rectangle (8.5,-1);
\draw (7.5,0) -- (7.5,-1);
\draw (10,0) -- (12,0) -- (12,-1) -- (11,-1) -- (11,-2) -- (10,-2) -- cycle;
\draw (10,-1) -- (11,-1) -- (11,0);
\draw (13.5,0) rectangle (15.5,-2);
\draw (14.5,0) -- (14.5,-2);
\draw (13.5,-1) -- (15.5,-1);
\draw (0,-3) node {$1$};
\draw (2,-3) node {$4$};
\draw (4.5,-3) node {$5$};
\draw (7.5,-3) node {$10$};
\draw (11,-3) node {$16$};
\draw (14.5,-3) node {$14$};
\end{tikzpicture}
\end{center}

\noindent
so that the signed enumeration is 
$1-4+5+10-16+14=10=(2\cdot 3\cdot 4\cdot 5)/
(1\cdot 2^2\cdot 3)$.

We remark that \eqref{Eq_Cn-in-box} is not actually new,
and it is implicit in \cite{ProcAE} that
the number of symplectic tableaux contained in $(r^m)$ 
($0\leq m\leq n$) is given by
\[
\prod_{i=1}^{2n-m+1} \prod_{j=1}^m \frac{i+j+r-1}{i+j-1}.
\]
See also \cite[Theorem~7]{KrGVAA} for an equivalent statement in terms of
vicious walkers (non-intersecting lattice paths).

\medskip

Our final Okada-type formula involves the even-orthogonal as well as
orthogonal characters.

\begin{theorem}\label{Thm_sososo-2}
Let $r$ be a positive integer. Then
\begin{subequations}
\begin{equation}\label{Eq_sososo-2}
\sum_{\la\subseteq (r^n)}  \so_{2n,\la}(x) 
=\so_{2n,(s^n)}(x) \so_{2n+1,(s^n)}(x),
\end{equation}
where $s:=\frac{1}{2}r$, and
\begin{equation}\label{Eq_ooso}
\sum_{\substack{\la\subseteq (r^n) \\[1pt] l(\la)=n}}  
\ortho_{2n,\la}(x) 
=\ortho_{2n,(s^n)}(x) \so_{2n+1,(t^n)}(x),
\end{equation}
\end{subequations}
where $s:=\frac{1}{2}(r+1)$ and $t:=\frac{1}{2}(r-1)$.
\end{theorem}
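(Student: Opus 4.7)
The approach for both parts will mirror the strategy behind Theorems \ref{Thm_sososo} and \ref{Thm_spspso}: express the characters as ratios of determinants via the Weyl character formula (in the form \eqref{Eq_so2n} and \eqref{Eq_o2n}), push the sum over $\la$ inside the determinants via the standard bijection $\mu_j = \la_j + n - j$ between partitions $\la \subseteq (r^n)$ and strict sequences $r+n-1 \geq \mu_1 > \cdots > \mu_n \geq 0$, and then evaluate the resulting determinantal sums by a Pfaffian-type minor summation formula of the type used by Okada \cite{Okada98}.

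For Part (a), the starting identity is
\[
\so_{2n,\la}(x) = \sum_{\si \in \{\pm 1\}} \frac{\det_{1\leq i,j\leq n}\!\big(\si x_i^{\la_j + n - j} + x_i^{-(\la_j + n - j)}\big)}{\det_{1\leq i,j\leq n}\!\big(x_i^{n-j} + x_i^{-(n-j)}\big)}.
\]
After the $\mu_j$-substitution and interchange of sum and determinant, the two resulting sums of determinants (one for each $\si$) are evaluated by an Okada-type Pfaffian minor summation. The resulting Pfaffian factors explicitly into a product which, divided by the denominator determinant, matches $\so_{2n,(s^n)}(x)\,\so_{2n+1,(s^n)}(x)$, the latter being verified by applying the Weyl character/denominator formulas in the form used in Section~\ref{Sec_Weyl}.

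For Part (b), the restriction $l(\la) = n$ forces $u_\la = 2$, so that $\ortho_{2n,\la}(x)$ is given by the \emph{single} determinantal ratio from \eqref{Eq_o2n}. The same substitution $\mu_j = \la_j + n - j$ now produces a sum over strict sequences with the stricter lower bound $\mu_n \geq 1$. Applying the Pfaffian minor summation adapted to this shifted range yields a product that is identified with $\ortho_{2n,(s^n)}(x)\,\so_{2n+1,(t^n)}(x)$; the relation $s - t = 1$ between the two rectangle heights on the right-hand side is precisely the combinatorial echo of the shift from $\mu_n \geq 0$ to $\mu_n \geq 1$.

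The main obstacle is the explicit Pfaffian evaluation and its factorization into the desired product, particularly for Part (b), where the $\mu_n \geq 1$ constraint requires a genuine modification of the minor summation formula used in Okada's original treatment of $\so_{2n+1}$ and $\symp_{2n}$. Constructing the correct skew-symmetric matrix, evaluating its Pfaffian in closed form, and recognising the answer as the product $\ortho_{2n,(s^n)}(x)\,\so_{2n+1,(t^n)}(x)$ (rather than, say, $\so_{2n,((s-1)^n)}$ times something) is the key technical step; we expect to borrow the ``preparatory results'' of \cite{Okada98} for the Pfaffian evaluation itself, and to handle the shifted range by a straightforward modification of those lemmas.
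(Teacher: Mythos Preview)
Your plan is essentially the paper's own proof: reduce to determinantal sums via \eqref{Eq_so2n} and \eqref{Eq_o2n}, reparametrise by strict sequences, apply the Ishikawa--Wakayama minor summation together with Okada's Pfaffian evaluation (packaged in the paper as Corollary~\ref{Cor_key}), and identify the resulting product. Two remarks are in order. First, you omit the reduction to even $n$: the minor summation formula (Theorem~\ref{Thm_IsWa}) and the Pfaffian evaluation (Theorem~\ref{Thm_Okada}) both require $n$ even, so the paper first invokes \eqref{Eq_reduce} (which holds equally for $\so_{2n,\la}$ and $\ortho_{2n,\la}$) to pass from $n$ to $n-1$, and then proves only the even case. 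Second, the shift from $\mu_n\geq 0$ to $\mu_n\geq 1$ in part (b) is simpler than you anticipate: in the paper's uniform framework (Corollary~\ref{Cor_key} with matrix entries $M_{ij}=x_i^{j-a}-\varepsilon x_i^{-(j-a)}$) it amounts merely to taking $a=0$ instead of $a=1$ (and upper bound $r+n-1$ instead of $r+n$), with no modification of the underlying Pfaffian lemma needed.
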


We remark that \eqref{Eq_ooso} also holds when the orthogonal characters
are replaced by even-orthogonal Schur functions, but in some sense this is 
a weakening of the result. 
In the other direction, the analogous result does not hold for
\eqref{Eq_sososo-2} in that we cannot replace the 
even-orthogonal Schur functions by orthogonal characters. 

By \eqref{Eq_so-odd-PS}, \eqref{Eq_ortho-PS} and \eqref{Eq_so-even-PS-wrong},
the above two identities result in the final corollary of this section.

\begin{corollary}\label{Cor_Dn-box-New}
For $r$ a positive integer, we have
\begin{subequations}
\begin{align}\label{Eq_ortho-Okada-spec-1}
\sum_{\la\subseteq (r^n)} &
\so_{2n,\la}(q^{1/2},q^{3/2},\dots,q^{n-1/2}) \\
&=q^{-rn^2/2} \frac{(q^{r/2+1/2};q)_n}{(q^{1/2};q)_n} 
\bigg( \frac{(-q^{r/2};q)_n}{(-1;q)_n}+\frac{(q^{r/2};q)_n}{(-1;q)_n}\bigg)
\prod_{i=1}^n \prod_{j=1}^{n-1} \frac{1-q^{i+j+r-1}}{1-q^{i+j-1}} \notag 
\intertext{and}
\label{Eq_ortho-Okada-spec-2} 
\sum_{\substack{\la\subseteq (r^n) \\[1pt] l(\la)=n}} &
\ortho_{2n,\la}(q^{1/2},q^{3/2},\dots,q^{n-1/2}) \\
&=2 q^{-rn^2/2} \frac{(q^{r/2};q)_n}{(q^{1/2};q)_n}
\cdot \frac{(-q^{r/2+1/2};q)_n}{(-1;q)_n} 
\prod_{i=1}^n \prod_{j=1}^{n-1} \frac{1-q^{i+j+r-1}}{1-q^{i+j-1}}.  \notag 
\end{align}
\end{subequations}
\end{corollary}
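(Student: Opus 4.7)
The plan is to specialize $x_i = q^{i-1/2}$ for $1 \le i \le n$ in the two identities of Theorem~\ref{Thm_sososo-2} and then invoke the principal specialization formulas \eqref{Eq_so-odd-PS}, \eqref{Eq_ortho-PS} and \eqref{Eq_so-even-PS-wrong} to evaluate each of the two factors on the right-hand sides. The sums on the left of \eqref{Eq_sososo-2} and \eqref{Eq_ooso}, once specialized, become precisely the sums on the left of \eqref{Eq_ortho-Okada-spec-1} and \eqref{Eq_ortho-Okada-spec-2}, so the task reduces to the evaluation of two products of rectangular-shape characters.

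For \eqref{Eq_ortho-Okada-spec-1}, I would insert $\la = (s^n)$ with $s = r/2$ into \eqref{Eq_so-even-PS-wrong} and \eqref{Eq_so-odd-PS}. Because all parts of a rectangular partition are equal, each ratio $(1-q^{\la_i-\la_j+j-i})/(1-q^{j-i})$ collapses to $1$, while $\abs{\la} = sn$ and $n(\la) = s\binom{n}{2}$. The combined prefactor from the two specializations is $q^{2(n(\la)-(n-1/2)\abs{\la})} = q^{-sn^2} = q^{-rn^2/2}$, matching the target. The single-index products telescope into the $q$-shifted factorials
\[
\prod_{i=1}^n \frac{1+q^{s+n-i}}{1+q^{n-i}} = \frac{(-q^{r/2};q)_n}{(-1;q)_n}, \qquad
\prod_{i=1}^n \frac{1-q^{s+n-i}}{1+q^{n-i}} = \frac{(q^{r/2};q)_n}{(-1;q)_n},
\]
and $\prod_{i=1}^n (1-q^{s+n-i+1/2})/(1-q^{n-i+1/2}) = (q^{r/2+1/2};q)_n/(q^{1/2};q)_n$, which produce all of the one-variable Pochhammer ratios visible in \eqref{Eq_ortho-Okada-spec-1}.

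The only real bookkeeping step is to identify the combined double products
\[
\prod_{1\le i<j\le n}\frac{1-q^{r+2n-i-j+1}}{1-q^{2n-i-j+1}}\cdot
\prod_{1\le i<j\le n}\frac{1-q^{r+2n-i-j}}{1-q^{2n-i-j}}
\]
with the single rectangular double product $\prod_{i=1}^n \prod_{j=1}^{n-1}(1-q^{i+j+r-1})/(1-q^{i+j-1})$. This is done by the reindexing $(a,b) = (n-j, n-i)$, under which the exponents $2n-i-j+1$ and $2n-i-j$ become $a+b+1$ and $a+b$ for $0 \le a < b \le n-1$; collecting factors of each type $1-q^{r+m-1}$ for $m = 2,\dots,2n-1$ and comparing with the multiplicities contributed by the $n\times(n-1)$ rectangle yields the asserted form.

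The derivation of \eqref{Eq_ortho-Okada-spec-2} is entirely parallel. Substituting $\la = (s^n)$ with $s = (r+1)/2$ into \eqref{Eq_ortho-PS} produces the factor $u_\la = 2$ (since $l(\la) = n$), accounting for the explicit prefactor $2$ in \eqref{Eq_ortho-Okada-spec-2}, together with the one-variable product $(-q^{r/2+1/2};q)_n/(-1;q)_n$. Substituting $\la = (t^n)$ with $t = (r-1)/2$ into \eqref{Eq_so-odd-PS} contributes $(q^{r/2};q)_n/(q^{1/2};q)_n$, and the identity $s+t = r$ again yields the $q$-prefactor $q^{-rn^2/2}$. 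The remaining rectangular double product is identified exactly as above. I anticipate no real obstacles here: every ingredient is already in place, and the main (mild) pitfall is only the careful tracking of indices when consolidating the two $\prod_{i<j}$ products into one $n\times(n-1)$ double product.
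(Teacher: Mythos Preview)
Your proposal is correct and follows exactly the approach taken in the paper: specialize the two identities of Theorem~\ref{Thm_sososo-2} at $x_i=q^{i-1/2}$ and evaluate the rectangular-shape characters on the right-hand sides via \eqref{Eq_so-odd-PS}, \eqref{Eq_ortho-PS} and \eqref{Eq_so-even-PS-wrong}. The paper states this in one sentence without detailing the bookkeeping, whereas you have spelled out the simplifications; the only nontrivial step you flag---consolidating the two $\prod_{i<j}$ products into the single $n\times(n-1)$ double product---is routine and handled correctly.
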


If we let $q\to 1$ in \eqref{Eq_ortho-Okada-spec-2}, we obtain a closed-form
expression for the number of even Sundaram tableaux of height exactly
$n$ and width at most $r$.
From \eqref{Eq_o-so} and $\so_{2n,\bar{\la}}(x)=\so_{2n,\la}(\bar{x})$,
it follows that
\begin{equation}\label{Eq_ortho-so-even}
\ortho_{2n,\la}(1^n)=u_{\la} \so_{2n,\la}(1^n).
\end{equation}
Hence we can combine \eqref{Eq_ortho-Okada-spec-1} and
\eqref{Eq_ortho-Okada-spec-2} to also obtain the enumeration
of such tableaux contained in $(r^n)$.

\begin{theorem}
The number of even Sundaram tableaux of height at most $n$ and width at most
$r$ is given by
\[
2^{2n-1}\frac{(\tfrac{1}{2}r+\frac{1}{2})_n+(\tfrac{1}{2}r)_n}{n!}
\prod_{i=1}^n \prod_{j=1}^{n-1}  \frac{i+j+r-1}{i+j},
\]
and the number of such tableaux of height exactly $n$ is
\[
2^{2n}\frac{(\tfrac{1}{2}r)_n}{n!}
\prod_{i=1}^n \prod_{j=1}^{n-1} \frac{i+j+r-1}{i+j}.
\]
\end{theorem}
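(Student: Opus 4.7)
The plan is to combine the tableau interpretation of the orthogonal characters $\ortho_{2n,\la}(1^n)$ with the two specialisations in Corollary~\ref{Cor_Dn-box-New}, and then take the limit $q\to 1$. Recall that by the result of Fulmek--Krattenthaler cited just before Lemma~\ref{Lem_Bn-PS}, $\ortho_{2n,\la}(1^n)$ equals the number of even Sundaram tableaux of shape $\la$. Hence the number of such tableaux contained in $(r^n)$ is $\sum_{\la\subseteq (r^n)}\ortho_{2n,\la}(1^n)$, and the number of such tableaux of height exactly $n$ is $\sum_{\la\subseteq (r^n),\,l(\la)=n}\ortho_{2n,\la}(1^n)$.

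For the ``height exactly $n$'' formula, the latter sum is already the $q\to 1$ specialisation of the left-hand side of \eqref{Eq_ortho-Okada-spec-2}. It thus suffices to take $q\to 1$ in the right-hand side. In this limit the ratio $(-q^{r/2+1/2};q)_n/(-1;q)_n\to 1$, the ratio $(q^{r/2};q)_n/(q^{1/2};q)_n\to (r/2)_n/(1/2)_n$, and the last double product becomes $\prod_{i=1}^n\prod_{j=1}^{n-1}(i+j+r-1)/(i+j-1)$. The resulting expression is then reshaped to match the stated answer by the elementary Pochhammer identity
\[
\frac{1}{(1/2)_n\,\prod_{i=1}^n\prod_{j=1}^{n-1}(i+j-1)}
=\frac{2^{2n-1}}{n!\,\prod_{i=1}^n\prod_{j=1}^{n-1}(i+j)},
\]
which itself follows from $(1/2)_n=(2n)!/(4^n n!)$ together with the telescoping evaluation $\prod_{i=1}^n\prod_{j=1}^{n-1}(i+j-1)/(i+j)=n!(n-1)!/(2n-1)!$.

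For the ``height at most $n$'' formula, one uses the relation \eqref{Eq_ortho-so-even}, i.e.\ $\ortho_{2n,\la}(1^n)=u_\la\,\so_{2n,\la}(1^n)$, to split according to whether $l(\la)<n$ or $l(\la)=n$:
\[
\sum_{\la\subseteq (r^n)}\ortho_{2n,\la}(1^n)
=\sum_{\la\subseteq (r^n)}\so_{2n,\la}(1^n)
+\sum_{\substack{\la\subseteq(r^n)\\ l(\la)=n}}\so_{2n,\la}(1^n).
\]
The first sum is the $q\to 1$ limit of the left-hand side of \eqref{Eq_ortho-Okada-spec-1}, while the second is half the $q\to 1$ limit of the left-hand side of \eqref{Eq_ortho-Okada-spec-2}. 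Taking $q\to 1$ in the right-hand sides of both identities and applying the Pochhammer identity above (used once with the $(1/2)_n$ coming from \eqref{Eq_ortho-Okada-spec-1}), the two resulting expressions admit the common factor $2^{2n-1}\prod_{i=1}^n\prod_{j=1}^{n-1}\frac{i+j+r-1}{i+j}/n!$, with respective coefficients $(r/2+1/2)_n$ and $(r/2)_n$; summing these coefficients yields the claimed formula.

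The only genuinely delicate step is keeping track of the behaviour of the factors $(q^{r/2};q)_n/(-1;q)_n$ and $(-q^{r/2};q)_n/(-1;q)_n$ in the limit $q\to 1$: the first tends to $0$ (so the second summand inside the parentheses of \eqref{Eq_ortho-Okada-spec-1} disappears) while the second tends to $1$. Once this is observed, the rest of the argument is a routine reshuffling of Pochhammer symbols of the type carried out above.
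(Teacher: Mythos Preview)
Your argument is correct and follows essentially the same route as the paper: let $q\to 1$ in \eqref{Eq_ortho-Okada-spec-2} for the ``height exactly $n$'' count, and for the ``height at most $n$'' count use the relation $\ortho_{2n,\la}(1^n)=u_\la\,\so_{2n,\la}(1^n)$ to combine the $q\to 1$ limits of \eqref{Eq_ortho-Okada-spec-1} and \eqref{Eq_ortho-Okada-spec-2}. Your only slip is the cross-reference: the Fulmek--Krattenthaler fact that $\ortho_{2n,\la}(1^n)$ counts even Sundaram tableaux of shape $\la$ is recalled in the even-orthogonal subsection (just before the lemma containing \eqref{Eq_ortho-PS-dual}), not before Lemma~\ref{Lem_Bn-PS}.
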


For example, when $r=n=2$ there are $46$
even Sundaram tableaux, with the following break-down according to shape

\medskip

\begin{center}
\begin{tikzpicture}[scale=0.3]
\draw[fill] (0,-0.5) circle (0.08cm);
\draw (1.5,0) rectangle (2.5,-1);
\draw (4,0) rectangle (5,-2);
\draw (4,-1) -- (5,-1);
\draw (6.5,0) rectangle (8.5,-1);
\draw (7.5,0) -- (7.5,-1);
\draw (10,0) -- (12,0) -- (12,-1) -- (11,-1) -- (11,-2) -- (10,-2) -- cycle;
\draw (10,-1) -- (11,-1) -- (11,0);
\draw (13.5,0) rectangle (15.5,-2);
\draw (14.5,0) -- (14.5,-2);
\draw (13.5,-1) -- (15.5,-1);
\draw (0,-3) node {$1$};
\draw (2,-3) node {$4$};
\draw (4.5,-3) node {$6$};
\draw (7.5,-3) node {$9$};
\draw (11,-3) node {$16$};
\draw (14.5,-3) node {$10$};
\end{tikzpicture}
\end{center}
so that exactly $32$ of these have height $2$.

\subsection{Proof of Theorems~\ref{Thm_sososo} and \ref{Thm_sososo-2}}
\label{sec:OkadProof}

Our proofs closely follow Okada's Pfaffian-based approach,
which relies on two key results: the Ishikawa--Wakayama minor summation 
formula \cite[Theorem~2]{IW95} (see also \cite[Theorem~3]{Okada89}) 
and Okada's Pfaffian evaluation \cite[Theorem~4.4]{Okada98}.

Recall that the Pfaffian of a $2m\times 2m$ skew-symmetric matrix $M$ 
is defined as \[
\Pf(M)=\sum_{\pi} (-1)^{c(\pi)} \prod_{(i,j)\in\pi} M_{ij},
\]
where the sum is over perfect matchings $\pi$ (or $1$-factorisations)
of the complete graph on $2m$ vertices (labelled $1,2,\dots,2m$), and 
the product is over all edges $(i,j)$ in the matching, $1\le i<j\le 2m$.
The crossing number $c(\pi)$ of a perfect matching $\pi$ is the number 
of pairs of edges $(i,j)$ and $(k,l)$ of $\pi$ such that $i<k<j<l$.

\begin{theorem}[Minor summation formula]\label{Thm_IsWa}
Let $n$ and $r$ be positive integers such that $n$ is even and $n\leq r$, 
and let $M$ be an arbitrary $n\times r$ matrix. Then
\begin{equation}\label{Eq_minor}
\sum_{\substack{J\subseteq \{1,\dots,r\} \\[1pt] \abs{J}=n}}
\det_{\substack{1\leq i\leq n \\[1pt] j\in J}}\big(M_{ij}\big)=\Pf(B),
\end{equation}
where 
$B$ is the $n\times n$ skew-symmetric matrix
\begin{equation}\label{Eq_B}
B=MAM^t,
\end{equation}
with $A$ the $r\times r$ skew-symmetric matrix with entries $A_{ij}=1$ 
for $j>i$. 
\end{theorem}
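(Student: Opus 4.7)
The plan is to deduce the identity from the more general \emph{Cauchy--Binet-type formula for Pfaffians}: for an arbitrary $n\times r$ matrix $M$ (with $n$ even) and an arbitrary $r\times r$ skew-symmetric matrix $A$, one has
\[
\Pf(MAM^t)=\sum_{\substack{J\subseteq\{1,\dots,r\} \\ \abs{J}=n}} \Pf(A_J)\det(M_J),
\]
where $A_J$ denotes the principal submatrix of $A$ with rows and columns indexed by $J$. Once this is established, I would specialise to the $A$ in the statement and verify $\Pf(A_J)=1$ for every such $J$; the right-hand side then collapses to $\sum_J\det(M_J)$, which is the left-hand side of \eqref{Eq_minor}.

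To prove the general formula, I would argue by multilinearity. Both sides are alternating multilinear functions of the $n$ rows of $M$: on the right this is visible from the determinants, and on the left it follows from the fact that $B_{ij}=(MAM^t)_{ij}$ is linear in rows $i$ and $j$ of $M$ while in the Pfaffian expansion $\Pf(B)=\sum_{\pi}(-1)^{c(\pi)}\prod_{(i,j)\in\pi}B_{ij}$ each row of $M$ appears in exactly one factor, and a row swap on $M$ translates into simultaneous row/column swaps of $B$, multiplying $\Pf(B)$ by $-1$. Hence $\Pf(MAM^t)=\sum_J c_J\det(M_J)$ for certain coefficients $c_J$. To pin $c_J$ down for a given $J=\{j_1<\cdots<j_n\}$, evaluate both sides at the ``indicator'' matrix $M$ with $M_{ik}=\delta_{k,j_i}$: then $\det(M_{J'})=\delta_{J',J}$, while a direct computation gives $(MAM^t)_{ij}=A_{j_i,j_j}=(A_J)_{ij}$, so $\Pf(MAM^t)=\Pf(A_J)$, whence $c_J=\Pf(A_J)$.

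It remains to compute $\Pf(A_J)$ for the specific $A$ with $A_{ij}=\sgn(j-i)$. For $J=\{j_1<\cdots<j_n\}$ the principal submatrix is again of the same form, i.e., $(A_J)_{ik}=\sgn(k-i)$, so the task reduces to showing that the $n\times n$ skew-symmetric matrix whose every superdiagonal entry equals $1$ has Pfaffian $1$ for every even $n$. I would prove this by induction on $n$ using the Pfaffian row-expansion
\[
\Pf(A_J)=\sum_{k=2}^{n}(-1)^{k}(A_J)_{1k}\,\Pf\big(A_J^{\hat 1\hat k}\big),
\]
where $A_J^{\hat 1\hat k}$ is obtained by deleting rows and columns $1$ and $k$. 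Since $(A_J)_{1k}=1$ for every $k\geq 2$ and each $A_J^{\hat 1\hat k}$ is of the same form with $n-2$ indices, the inductive hypothesis reduces the expansion to $\sum_{k=2}^{n}(-1)^{k}$, which equals $1$ for even $n$.

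The main obstacle, modest as it is, lies in the multilinearity step: one must track carefully how every perfect matching in the Pfaffian expansion reacts to a row swap or rescaling of $M$. An alternative, more hands-on route is to expand $\Pf(MAM^t)$ directly as a double sum over perfect matchings of $\{1,\dots,n\}$ together with choices of a pair of column indices for each edge, group the terms by the multiset of columns used, and invoke a sign-reversing involution to kill the contributions in which a column is repeated; the surviving terms then reassemble into $\sum_J\det(M_J)$.
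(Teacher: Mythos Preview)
Your argument is correct. The Cauchy--Binet-type identity $\Pf(MAM^t)=\sum_{J}\Pf(A_J)\det(M_J)$ does follow from the observation that $\Pf(MAM^t)$ is alternating multilinear in the rows of $M$ (each row index occurs in exactly one edge of any perfect matching, giving linearity; a row swap induces a simultaneous row/column swap of $B$, giving the sign), together with the evaluation at the indicator matrix, which cleanly picks out $\Pf(A_J)$. The computation $\Pf(A_J)=1$ via the first-row expansion and the telescoping alternating sum is also fine; the base case $n=0$ (empty Pfaffian equal to $1$) or $n=2$ anchors the induction.

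As for comparison with the paper: there is nothing to compare against. The paper does not prove this theorem; it is quoted as the Ishikawa--Wakayama minor summation formula (with a reference also to an earlier result of Okada) and used as a black box. The original proofs in those references proceed somewhat differently --- Ishikawa and Wakayama work with a more general weighted version and argue via an explicit Pfaffian expansion and sign analysis rather than via the exterior-algebra multilinearity shortcut you use --- but your approach is a perfectly valid and arguably more conceptual derivation of the special case needed here. In particular, your route makes transparent why the coefficients must be $\Pf(A_J)$, at the cost of invoking the (standard) fact that the maximal minors span the alternating multilinear forms. No gaps.
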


Here it should be understood that $J$ is viewed as an ordered $n$-subset of
$\{1,\dots,r\}$, i.e., $J=\{j_1<j_2<\dots<j_n\}$.

\begin{theorem}[Okada's Pfaffian evaluation]\label{Thm_Okada}
Let $x=(x_1,\dots,x_n)$ where $n$ is even. Let 
$Q(x;a,b)$ be the $n\times n$ skew-symmetric matrix with entries
\begin{equation}\label{Eq_Q-def}
Q_{ij}(x;a,b)=\frac{q(x_i,x_j,a_i,a_j)q(x_i,x_j,b_i,b_j)} 
{(x_i-x_j)(1-x_ix_j)},
\end{equation}
where $q(\alpha,\beta,\gamma,\delta):=
(\alpha-\beta)(1-\gamma\delta)-(1-\alpha\beta)(\gamma-\delta)$,
and let $W(x;a)$ be the $n\times n$ matrix with entries
\begin{equation}\label{Eq_W}
W_{ij}(x;a)=a_i x_i^{n-j}-x_i^{j-1}.
\end{equation}
Then
\begin{equation}\label{Eq_QWW}
\Pf\big(Q(x;a,b)\big)=\frac{\det \big(W(x;a)\big) \det\big(W(x;b)\big)}
{\prod_{1\leq i<j\leq n}(x_i-x_j)(1-x_ix_j)}.
\end{equation}
\end{theorem}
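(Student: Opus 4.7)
I would follow the strategy of Okada by applying the Ishikawa--Wakayama minor summation formula (Theorem~\ref{Thm_IsWa}). The starting observation is the $2 \times 2$ determinantal representation
\[
q(\alpha,\beta,\gamma,\delta) = \det\begin{pmatrix} \alpha - \beta & \gamma - \delta \\ 1 - \alpha\beta & 1 - \gamma\delta \end{pmatrix},
\]
which exhibits the numerator of $Q_{ij}(x;a,b)$ as a product of two $2 \times 2$ determinants, one involving only $(x,a)$ and the other only $(x,b)$. A small-$n$ sanity check supports this viewpoint: for $n=2$ a direct computation yields $\det(W(x;a)) = -q(x_1,x_2,a_1,a_2)$, and since $\Pf(Q) = Q_{12}$ in that case, the identity reduces to an immediate verification.

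The main step is to construct a rectangular matrix $M = M(x;a,b)$ of size $n \times 2n$ so that $Q = M A M^t$, where $A$ is the skew-symmetric $2n \times 2n$ matrix with $A_{kl} = 1$ for $k < l$. Because $(MAM^t)_{ij} = \sum_{k<l}(M_{ik}M_{jl} - M_{il}M_{jk})$ is a sum of $2 \times 2$ minors of $M$, the bi-determinantal structure of $Q_{ij}$ suggests that $M$ can be built with rows mixing $a$-weighted and $b$-weighted monomials in $x_i$, together with a factor of the form $1/\prod_{k \ne i}(x_i - x_k)(1 - x_ix_k)$ to reproduce the shared denominator of the $Q_{ij}$. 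Once $M$ is pinned down, Theorem~\ref{Thm_IsWa} immediately yields
\[
\Pf(Q) = \sum_{\substack{J \subseteq \{1,\dots,2n\} \\ \abs{J} = n}} \det\big(M_{\cdot,J}\big).
\]

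To match the right-hand side of Theorem~\ref{Thm_Okada}, I would expand $\det(W(x;a))$ and $\det(W(x;b))$ by Cauchy--Binet, viewing each $W(x;a)$ as the product of an $n \times 2n$ power-sum matrix with a sparse selection matrix encoding the decomposition $W_{ij}(x;a) = a_i x_i^{n-j} - x_i^{j-1}$. Multiplying the two resulting sums of minors, dividing by $\prod_{i<j}(x_i-x_j)(1-x_ix_j)$, and identifying the outcome with the minor sum produced by $\Pf(Q)$ (possibly after a regrouping of summation indices) completes the proof. The main obstacle is identifying the correct matrix $M$ so that the identity $Q = MAM^t$ holds exactly: the appearance of the denominator $(x_i - x_j)(1 - x_i x_j)$, which does not factor as a product of a function of $i$ times a function of $j$, forces $M$ to have rational rather than polynomial rows, and verifying $Q = MAM^t$ then reduces to a partial-fraction identity in the $x_i$. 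An alternative route, if constructing $M$ proves too delicate, is induction on $n$: expanding $\Pf(Q)$ by the Pfaffian cofactor rule along the first row and $\det(W(x;a))\det(W(x;b))$ by Laplace along the respective first rows, one reduces the claim to lower-order instances plus correction terms that can be matched by direct algebraic manipulation.
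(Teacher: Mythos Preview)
The paper does not actually prove this theorem: it is quoted verbatim from Okada \cite[Theorem~4.4]{Okada98} and used, together with the minor summation formula (Theorem~\ref{Thm_IsWa}), as a black box to derive Corollary~\ref{Cor_key}. So there is no ``paper's own proof'' to compare against; in the paper's architecture, Theorems~\ref{Thm_IsWa} and~\ref{Thm_Okada} are two \emph{independent} inputs, not one deriving the other.

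Your proposal, by contrast, tries to prove Theorem~\ref{Thm_Okada} \emph{via} Theorem~\ref{Thm_IsWa}, and the core step is missing. You need an explicit $n\times 2n$ matrix $M$ with $Q=MAM^{t}$, and you say yourself that ``the main obstacle is identifying the correct matrix $M$''. That is not a technical nicety: the denominator $(x_i-x_j)(1-x_ix_j)$ of $Q_{ij}$ is genuinely not of the form $f(i)g(j)$, so a factorisation $Q=MAM^{t}$ with finitely many columns is not available by a simple row-scaling trick. Your suggestion of rows carrying $1/\prod_{k\ne i}(x_i-x_k)(1-x_ix_k)$ introduces the factor $(x_i-x_j)^2(1-x_ix_j)^2$ into $M_{i\cdot}M_{j\cdot}^{t}$-type products, not the single power you need, and you would then have to manufacture a compensating numerator via the column structure---none of which is carried out. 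The Cauchy--Binet step is similarly schematic: $W_{ij}(x;a)=a_ix_i^{n-j}-x_i^{j-1}$ has the $a_i$ attached to the row index, so $W(x;a)$ does not factor as (row-independent $n\times 2n$)$\times$(column selector), and your expansion of $\det W(x;a)\det W(x;b)$ into minors is left unspecified.

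In short, what you have written is a plausible-sounding outline with the decisive construction absent and the matching of the two minor sums unaddressed; the fallback ``induction via Pfaffian cofactor expansion'' is likewise only named, not executed. Okada's own proof (in \cite{Okada98}) proceeds by a direct algebraic argument rather than by bootstrapping from the minor summation formula, and the paper simply imports that result.
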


Combining these two theorems we readily obtain the following result.

\begin{corollary}\label{Cor_key}
Let $n,r$ be positive integers such that $n$ is even, 
$\varepsilon\in\{\pm 1\}$, and $M=M(a,\varepsilon)$ is the $n\times r$ matrix 
with entries
\[
M_{ij}=x_i^{j-a}-\varepsilon x_i^{-j+a}.
\]
Then
\begin{multline*}
\sum_{\substack{J\subseteq \{1,\dots,r\} \\[1pt] \abs{J}=n}}
\det_{\substack{1\leq i\leq n \\[1pt] j\in J}}\big(M_{ij}\big)
=(-1)^{n/2} \det_{1\leq i,j\leq n} 
\Big(x_i^{r/2+n/2-j-a+1}-\varepsilon x_i^{-(r/2+n/2-a-j+1)}\Big) \\
\times
\frac{\det_{1\leq i,j\leq n}
\big(x_i^{r/2+n/2-j+1/2}-x_i^{-(r/2+n/2-j+1/2)}\big)}
{\det_{1\leq i,j\leq n} \big(x_i^{n-j+1/2}-x_i^{-(n-j+1/2)}\big)}.
\end{multline*}
\end{corollary}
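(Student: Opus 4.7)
The plan is to chain Theorems~\ref{Thm_IsWa} and~\ref{Thm_Okada} via an explicit evaluation of $MAM^t$. Applying the Ishikawa--Wakayama formula to the $n\times r$ matrix $M$ immediately gives
\[
\sum_{\substack{J\subseteq\{1,\dots,r\}\\ \abs{J}=n}}
\det_{\substack{1\leq i\leq n\\ j\in J}}(M_{ij})=\Pf(B),\qquad B:=MAM^t,
\]
so the real task is to evaluate this Pfaffian by bringing $B$ into the form required by Theorem~\ref{Thm_Okada}.

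For this, expand
\[
B_{ij}=\sum_{1\leq k<l\leq r}\bigl(M_{ik}M_{jl}-M_{il}M_{jk}\bigr)
\]
using $M_{ik}=x_i^{k-a}-\varepsilon x_i^{a-k}$. This produces four bilinear double sums of the shape $\sum_{k<l}(y^k z^l-y^l z^k)$ with $(y,z)\in\bigl\{(x_i,x_j),(x_i^{-1},x_j^{-1}),(x_i,x_j^{-1}),(x_i^{-1},x_j)\bigr\}$, each of which is evaluated by the elementary identity
\[
\sum_{1\leq k<l\leq r}(y^k z^l-y^l z^k)=\frac{yz\,q(y,z,y^r,z^r)}{(y-1)(z-1)(yz-1)},
\]
with $q$ as in~\eqref{Eq_Q-def}. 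Recombining the four expressions with the signs $1,-\varepsilon,-\varepsilon,1$ arising from the expansion of $M_{ik}M_{jl}-M_{il}M_{jk}$, the four $q$-factors telescope into
\[
B_{ij}=\psi_i\psi_j\,Q_{ij}(x;\alpha,\beta),\quad
\psi_i=\frac{x_i^{a-r}}{x_i-1},\;
\alpha_i=\varepsilon x_i^{r-2a+1},\;
\beta_i=x_i^{r},
\]
with $Q$ the matrix from Theorem~\ref{Thm_Okada}. This identification is the main technical step; it boils down to the algebraic identity
$(y-z)\bigl(1-(yz)^s\bigr)-\varepsilon(1-yz)(y^s-z^s)=-q(y,z,\varepsilon y^s,\varepsilon z^s)$ with $s=r-2a+1$.

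Since $B=D_\psi\,Q(x;\alpha,\beta)\,D_\psi$ with $D_\psi=\operatorname{diag}(\psi_1,\dots,\psi_n)$, we have $\Pf(B)=\prod_i\psi_i\cdot\Pf(Q(x;\alpha,\beta))$, and Theorem~\ref{Thm_Okada} then gives
\[
\Pf(B)=\prod_{i=1}^n\psi_i\cdot
\frac{\det W(x;\alpha)\det W(x;\beta)}{\prod_{1\leq i<j\leq n}(x_i-x_j)(1-x_ix_j)}.
\]
From each $W$-determinant one extracts a monomial row factor (and in the case of $W(x;\alpha)$ multiplies every row by $\varepsilon$, contributing $\varepsilon^n=1$ since $n$ is even), turning the numerator into the product of the two determinants on the right-hand side of the corollary. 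The denominator product is converted into $\det(x_i^{n-j+1/2}-x_i^{-(n-j+1/2)})$ via the $\mathrm{B}_n$ denominator identity~\eqref{Eq_Bn-denom}; the monomial factors $x_i^{n-1/2}$ and the factors $(1-x_i)/(x_i-1)$ introduced in this conversion cancel exactly against those already present from $\psi_i$ and the row extractions. The residual sign $(-1)^{\binom{n+1}{2}}\cdot(-1)^n$ collapses to $(-1)^{n/2}$ (using that $n$ is even), yielding the claimed prefactor.
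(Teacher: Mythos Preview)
Your proof is correct and follows essentially the same route as the paper: apply the Ishikawa--Wakayama minor summation formula, identify $B_{ij}$ with $\psi_i\psi_j\,Q_{ij}(x;\alpha,\beta)$ via geometric-series summation, evaluate the Pfaffian by Okada's formula, and clean up using the $\mathrm{B}_n$ denominator identity together with the parity of~$n$. Your write-up is somewhat more explicit about the intermediate bookkeeping (the four bilinear sums and the sign tracking) than the paper's ``routine calculation''; note only that in your displayed identity for $q(y,z,\varepsilon y^s,\varepsilon z^s)$ the minus sign on the right should be a plus, and that your $\psi_i$ and the paper's differ by a sign (harmless since $n$ is even).
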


\begin{proof}
A routine calculation using the summation of geometric series shows that 
for the above choice of $M$, the matrix $B$ in \eqref{Eq_B} is given by
\[
B_{ij}=\sum_{1\leq k<l\leq r}
\big(M_{ik}M_{jl}-M_{il}M_{jk}\big)
=\frac{(x_ix_j)^{a-r}}{(1-x_i)(1-x_j)}\,
Q_{ij}\big(x;x^r,\varepsilon x^{r-2a+1}\big),
\]
where $x^a$ is shorthand for $(x_1^a,\dots,x_n^a)$.
Since $\Pf(u_i u_j v_{ij})=\big(\prod_i u_i\big) \Pf(v_{ij})$, we obtain
\begin{align*}
\Pf(B)&=\bigg(\prod_{i=1}^n \frac{x_i^{a-r}}{1-x_i}\bigg)
\Pf\Big(Q\big(x;x^r,\varepsilon x^{r-2a+1}\big)\Big) \\
&=\frac{\det\big(W(x;x^r)\big)\det\big(W(x;\varepsilon x^{r-2a+1})\big)}
{\prod_{i=1}^n x_i^{r-a}(1-x_i)\prod_{1\leq i<j\leq n}(x_i-x_j)(1-x_ix_j)}.
\end{align*}
Use of \eqref{Eq_W}, the $\mathrm{B}_n$ Vandermonde determinant
\eqref{Eq_Bn-denom}, and the fact that $n$ is even and $\varepsilon^2=1$ 
completes the proof.
\end{proof}

\begin{proof}[Proof of Theorem~\ref{Thm_sososo}]
{From} (see, e.g., \cite[Lemma~5.3(2)]{Okada98})
\begin{equation}\label{Eq_reduce}
\lim_{x_n\to 0} x_n^r \so^{\sigma}_{2n+1,\la}(x_1,\dots,x_n)
=\begin{cases}
\so^{\sigma}_{2n-1,\mu}(x_1,\dots,x_{n-1}),
& \text{if $r=\la_1$}, \\ 0, & \text{if $r>\la_1$},
\end{cases}
\end{equation}
for $\mu:=(\la_2,\dots,\la_{n-1})$,
it follows that, if we multiply both sides of \eqref{Eq_sososo} 
by $x_n^r$ and let $x_n$ tend to zero, 
we obtain \eqref{Eq_sososo} with $n$ replaced by $n-1$.
Hence it suffices to prove the claim for even values of $n$.

\medskip

Let $S_r$ denote the left-hand side of \eqref{Eq_sososo}.
{From} \eqref{Eq_soplusso}, it follows that 
\[
S_r=\sum_{\la\subseteq (r^n)} \so^{\sigma}_{2n+1,\la}(x).
\]
By \eqref{Eq_so2n1-def} and \eqref{Eq_soplus-def}, this can also
be written as
\[
S_r=\frac{\sum_{\la\subseteq (r^n)} \det_{1\leq i,j\leq n}
\big(x_i^{\la_j+n-j+1/2}-\varepsilon x_i^{-(\la_j+n-j+1/2)}\big)}
{\det_{1\leq i,j\leq n}\big(x_i^{n-j+1/2}-\varepsilon x_i^{-(n-j+1/2)}\big)}.
\]
If we replace the sum over $\la$ by a sum over $k_1,\dots,k_n$ via
the substitution 
\[
\la_j=k_{n-j+1}-\rho_j-1/2, \quad\text{for $1\leq j\leq n$},
\]
and reverse the order of the columns in the determinant, this leads to
\[
S_r=(-1)^{\binom{n}{2}}
\sum_{1\leq k_1<k_2<\dots<k_n\leq r+n}\,
\frac{\det_{1\leq i,j\leq n}\big(x_i^{k_j-1/2}-\varepsilon 
x_i^{-(k_j-1/2)}\big)}
{\det_{1\leq i,j\leq n}\big(x_i^{n-j+1/2}-\varepsilon x_i^{-(n-j+1/2)}\big)}.
\]
Now assume that $n$ is even. We can then apply Corollary~\ref{Cor_key}
with $r\mapsto r+n$ and $a=1/2$ to find
\begin{multline*}
S_r=\frac{\det_{1\leq i,j\leq n} 
\big(x_i^{r/2+n-j+1/2}-\varepsilon x_i^{-(r/2+n-j+1/2)}\big)} 
{\det_{1\leq i,j\leq n}\big(x_i^{n-j+1/2}-\varepsilon x_i^{-(n-j+1/2)}\big)} \\
\times
\frac{\det_{1\leq i,j\leq n}
\big(x_i^{r/2+n-j+1/2}-x_i^{-(r/2+n-j+1/2)}\big)}
{\det_{1\leq i,j\leq n} \big(x_i^{n-j+1/2}-x_i^{-(n-j+1/2)}\big)}.
\end{multline*}
Finally, recalling \eqref{Eq_so2n1-def} and \eqref{Eq_soplus-def},
we obtain
\[
S_r=\so^{\sigma}_{2n+1,(s^n)}(x) \so_{2n+1,(s^n)}(x),
\]
with $s=\frac{1}{2}r$.
\end{proof}

\begin{proof}[Proof of Theorem~\ref{Thm_sososo-2}]
Equation \ref{Eq_reduce} once again holds when $\so^{\sigma}_{2n+1,\la}$ is 
replaced by $\so_{2n,\la}$ or $\ortho_{2n,\la}$, so that we may again take
$n$ to be even.

Let $S_r$ and $S'_r$ denote the left-hand sides of \eqref{Eq_sososo-2} 
and \eqref{Eq_ooso}, respectively.
Using \eqref{Eq_so2n} and \eqref{Eq_o2n} and making the substitutions
\[
\begin{cases}
S_r:\quad \la_j=k_{n-j+1}-n+j-1, \\
S'_r:\quad \la_j=k_{n-j+1}-n+j ,
\end{cases}
\qquad
\text{for $1\leq j\leq n$},
\]
we get
\[
S_r=(-1)^{\binom{n}{2}}
\sum_{\sigma\in\{\pm 1\}} \sum_{1\leq k_1<k_2<\dots<k_n\leq r+n}\,
\frac{\det_{1\le i,j\le n}\big(\sigma x_i^{k_j-1}+x_i^{-(k_j-1)}\big)}
{\det_{1\le i,j\le n}\big(x_i^{n-j}+x_i^{-(n-j)}\big)}
\]
and
\[
S'_r=2(-1)^{\binom{n}{2}}
\sum_{1\leq k_1<k_2<\dots<k_n\leq r+n-1}\,
\frac{\det_{1\le i,j\le n}\big(x_i^{k_j}+x_i^{-k_j}\big)}
{\det_{1\le i,j\le n}\big(x_i^{n-j}+x_i^{-(n-j)}\big)}.
\]
By Corollary~\ref{Cor_key} with $r\mapsto r+n$, $a=1$, $\varepsilon=-\sigma$
and $r\mapsto r+n-1$, $a=0$, $\varepsilon=-1$, respectively, this yields
\begin{multline*}
S_r=\frac{\sum_{\sigma\in\{\pm 1\}} \det_{1\leq i,j\leq n} 
\big(\sigma x_i^{r/2+n-j}+x_i^{-(r/2+n-j)}\big)}
{\det_{1\le i,j\le n}\big(x_i^{n-j}+x_i^{-(n-j)}\big)} \\
\times
\frac{\det_{1\leq i,j\leq n}
\big(x_i^{r/2+n/2-j+1/2}-x_i^{-(r/2+n/2-j+1/2)}\big)}
{\det_{1\leq i,j\leq n} \big(x_i^{n-j+1/2}-x_i^{-(n-j+1/2)}\big)}
\end{multline*}
and
\begin{multline*}
S'_r=2\,\frac{\det_{1\leq i,j\leq n} 
\big(x_i^{r/2+n-j+1/2}+x_i^{-(r/2+n-j+1/2)}\big)} 
{\det_{1\le i,j\le n}\big(x_i^{n-j}+x_i^{-(n-j)}\big)} \\
\times \frac{\det_{1\leq i,j\leq n}
\big(x_i^{r/2+n-j}-x_i^{-(r/2+n-j)}\big)}
{\det_{1\leq i,j\leq n} \big(x_i^{n-j+1/2}-x_i^{-(n-j+1/2)}\big)},
\end{multline*}
where we have used that $n$ is even.
{From} \eqref{Eq_so2n1-def} and \eqref{Eq_so2n}, we see that the 
expression for $S_r$ is exactly
\[
\so_{2n,(s^n)}(x) \so_{2n+1,(s^n)}(x),\qquad s:=\tfrac{1}{2}r,
\]
and that for $S'_r$
\[
\ortho_{2n,(s^n)}(x) \so_{2n+1,(t^n)}(x),\qquad
s:=\tfrac{1}{2}(r+1),~t:=\tfrac{1}{2}(r-1). \qedhere
\]
\end{proof}

\section{Discrete Macdonald--Mehta integrals for $\gamma=1/2$}
\label{sec:gamma=1/2}

We will slightly extend our earlier definition \eqref{Eq_Srdef} by
considering 
$\mathcal{S}_{r,n}(\alpha,\gamma,\delta)$ for $n$ a non-negative integer
or half-integer.
In the latter case, the sum over $k_1,\dots,k_r$ is assumed to range
over half-integers, so that in both cases the $k_i$ are summed over 
$\{-n,-n+1,\dots,n\}$.

\subsection{The evaluation of $\mathcal{S}_{r,n}(1,\frac{1}{2},0)$} 
Instead of computing this sum directly, we first consider a $q$-analogue.

\begin{proposition}[$\mathrm{A}_{r-1}$ summation]\label{Prop_A-sum}
Let $0<q<1$, $r$ a positive integer and $n$ an integer or half-integer
such that $n\geq (r-1)/2$. Then
\begin{multline}\label{Eq_An-sum}
\sum_{k_1,\dots,k_r=-n}^n \,
\prod_{1\leq i<j\leq r} \Abs{[k_i-k_j]_q} \,
\prod_{i=1}^r q^{(k_i+n-r+i)^2/2} \qbin{2n}{n+k_i} \\
=\frac{r!}{[r]_{q^{1/2}}!} 
\prod_{i=1}^r (-q^{1/2};q^{1/2})_i (-q^{i/2+1};q)_{2n-r} \\
\times \prod_{i=1}^r 
\frac{\Gamma_q(1+\frac{1}{2}i)}{\Gamma_q(\tfrac{3}{2})}
\cdot \frac{\Gamma_q(2n+1)\,\Gamma_q(2n-i+\frac{5}{2})}
{\Gamma_q(2n-i+2)\,\Gamma_q(2n-\frac{1}{2}i+2)}. 
\end{multline}
\end{proposition}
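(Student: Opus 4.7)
My plan is to follow the Pfaffian-based minor-summation approach of Section~\ref{sec:OkadProof}, adapted to the non-symmetric $q$-Gaussian factor $\prod_i q^{(k_i+n-r+i)^2/2}$ appearing in the summand. The starting point is a type-$\mathrm{A}_{r-1}$ analogue of the Weyl denominator identity: by factoring $q^{(n-r+i)^2/2}$ out of row $i$ and $q^{k_j^2/2}$ out of column $j$, the $r\times r$ determinant reduces to a Vandermonde in the variables $y_j:=q^{k_j}$, giving the polynomial identity
\[
\det\!\big(q^{(k_j+n-r+i)^2/2}\big)_{i,j=1}^r = (1-q)^{\binom{r}{2}}\prod_{i=1}^r q^{(k_i+n-r+i)^2/2}\prod_{1\leq i<j\leq r}[k_i-k_j]_q.
\]
For ordered tuples $k_1>k_2>\cdots>k_r$ the signed and absolute Vandermondes coincide.

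Since the Gaussian factor is non-symmetric, splitting the sum over $\{-n,\ldots,n\}^r$ into $S_r$-orbits turns the $S_r$-average of the Gaussian factor into the \emph{permanent}, not the determinant, of the matrix $\big(q^{(k_j+n-r+i)^2/2}\big)_{i,j}$. The crucial combinatorial step converts this permanent-weighted Vandermonde expression into a sum of $r\times r$ minors of the rectangular matrix $M$ with entries $M_{i,l}=q^{(l+n-r+i)^2/2}\qbin{2n}{n+l}$ for $i\in\{1,\ldots,r\}$ and $l\in\{-n,\ldots,n\}$. The key identity here is $[m]_q(1+q^m)=[2m]_q$ and its $r$-variable extension, most transparently derived via the $2r$-variable type-$\mathrm{A}$ Weyl denominator formula. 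The Ishikawa--Wakayama minor summation formula (Theorem~\ref{Thm_IsWa}) then yields
\[
\mathcal{S}_{r,n}(1,\tfrac{1}{2},0) = C(q,r,n)\,\Pf(MAM^T),
\]
where $A$ is the skew $(2n+1)\times(2n+1)$ matrix of that theorem and $C(q,r,n)$ an explicit $q$-power prefactor.

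The entries $B_{ik}:=(MAM^T)_{ik}$ are bilinear $q$-Gaussian sums of the form $\sum_{l<l'}\qbin{2n}{n+l}\qbin{2n}{n+l'}\big(q^{(l+n-r+i)^2/2+(l'+n-r+k)^2/2}-q^{(l'+n-r+i)^2/2+(l+n-r+k)^2/2}\big)$. Each is evaluable in closed form via the $q$-binomial theorem, yielding products of the shape $(-q^{\bullet};q)_{2n}$, precisely the factors appearing in the proposition's right-hand side. One then verifies that $B$ conforms to the template $Q(x;a,b)$ of Okada's Pfaffian evaluation (Theorem~\ref{Thm_Okada}) with $x_i=q^i$ and $a_i,b_i=\pm q^{\bullet}$ determined by these Gaussian sum evaluations; Okada's identity factorises $\Pf(B)$ as a ratio of two Vandermonde-like determinants, each evaluable via the $\B_r$-denominator identity~\eqref{Eq_Bn-denom}. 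Combining all factors and simplifying using the duplication and reflection identities for $\Gamma_q$ produces the product on the right-hand side. The main obstacle is the permanent-to-minor conversion: while $r=2$ is transparent via $(1+q^{k_1-k_2})[k_1-k_2]_q=[2(k_1-k_2)]_q$, the general-$r$ identity requires a delicate argument exploiting the $2r$-variable Vandermonde structure; once this conversion is in place, the remaining computations are intricate but routine applications of known $q$-series machinery.
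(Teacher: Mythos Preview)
The paper's proof is entirely different from, and much simpler than, your proposal. The key observation you missed is that the summand is actually \emph{symmetric} in $k_1,\ldots,k_r$: although the Gaussian factor $\prod_i q^{(k_i+n-r+i)^2/2}$ is not symmetric on its own, the paper shows via a one-line computation that
\[
q^{\sum_i (k_i+n-r+i)^2/2}\prod_{i<j}\bigl|1-q^{k_i-k_j}\bigr|
=q^{\text{const}+\sum_i (k_i+n-r+1)^2/2}\prod_{i<j}\bigl|q^{k_i}-q^{k_j}\bigr|,
\]
which is visibly symmetric. One may therefore restrict to $k_1>\cdots>k_r$ at the cost of a factor $r!$, substitute $k_i=\lambda'_i-n+r-i$ to sum over partitions $\lambda\subseteq(r^{2n-r+1})$, and recognise the result (via Lemma~\ref{Lem_Schur_PSprime}) as the Schur-function box sum $\sum_{\lambda\subseteq(r^N)}s_\lambda(q^{1/2},q^{3/2},\ldots,q^{N-1/2})$, whose closed form is the Macdonald--MacMahon symmetric-plane-partition formula. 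No Pfaffians enter.

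Your Pfaffian route has genuine gaps. The ``permanent-to-minor conversion'' you flag as the main obstacle is never carried out for general~$r$; your $r=2$ identity $(1+q^{k_1-k_2})[k_1-k_2]_q=[2(k_1-k_2)]_q$ is in fact just the symmetry statement above in disguise (the permanent factor $1+q^{k_1-k_2}$ records that the two $S_2$-images of the summand coincide), and no ``$2r$-variable Vandermonde'' identity producing a single Ishikawa--Wakayama minor sum is exhibited. Even granting that step, the further assertions that the bilinear Gaussian sums $B_{ik}$ evaluate to pure products via the $q$-binomial theorem and that $B$ matches Okada's template $Q(x;a,b)$ of Theorem~\ref{Thm_Okada} are unsupported: Okada's $Q_{ij}$ has the rigid rational form $q(x_i,x_j,a_i,a_j)q(x_i,x_j,b_i,b_j)/\big((x_i-x_j)(1-x_ix_j)\big)$, and there is no evident reason the type-$\mathrm{A}$ data should conform to it. The paper itself reserves the Pfaffian machinery of Section~\ref{sec:OkadProof} for types $\mathrm{B}$, $\mathrm{C}$, $\mathrm{D}$ and explicitly treats type~$\mathrm{A}$ by the Schur-function method instead.
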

Taking the
$q\to 1$ limit, we arrive at (cf.\ \eqref{eq:1any0})
\begin{align} \label{eq:S110}
\mathcal{S}_{r,n}(1,\tfrac{1}{2},0)&=
\sum_{k_1,\dots,k_r=-n}^n \, \prod_{1\leq i<j\leq r} \abs{k_i-k_j}\,
\prod_{i=1}^r \binom{2n}{n+k_i} \\ \notag
&=2^{2rn-\binom{r}{2}}\, \prod_{i=1}^r
\frac{\Gamma(1+\frac{1}{2}i)}{\Gamma(\tfrac{3}{2})}\cdot
\frac{\Gamma(2n+1)\,\Gamma(2n-i+\frac{5}{2})}
{\Gamma(2n-i+2)\,\Gamma(2n-\frac{1}{2}i+2)}.
\end{align}
The evaluation of $S_{1,1}(n)$ in \cite[Equation~(5.6)]{BOOPAA} is the
special case $r=2$ of this identity.

\begin{proof}[Proof of Proposition~\ref{Prop_A-sum}]
Denote the sum on the left of \eqref{Eq_An-sum} by $f_{n,r}$.
Since
\begin{multline*}
q^{\sum_{i=1}^r (k_i+n-r+i)^2/2} 
\prod_{1\leq i<j\leq r} \Abs{1-q^{k_i-k_j}} \\
=q^{(n-1/2)\binom{r}{2}-2\binom{r}{3}+\sum_{i=1}^r (k_i+n-r+1)^2/2}
\prod_{1\leq i<j\leq r} \Abs{q^{k_i}-q^{k_j}},
\end{multline*}
the summand of $f_{n,r}$ is a symmetric function which 
vanishes unless all $k_i$ are pairwise distinct.
Anti-symmetrisation thus yields
\[
f_{n,r}=\frac{r!}{(1-q)^{\binom r2}}\sum_{n\geq k_1>\cdots>k_r\geq -n}\,
\prod_{1\leq i<j\leq r} \big(1-q^{k_i-k_j}\big)
\prod_{i=1}^r q^{(k_i+n-r+i)^2/2} \qbin{2n}{n+k_i}.
\]
We write this as a sum over partitions $\la\subseteq (r^{2n-r+1})$ via
\[
k_i=\lambda'_i-n+r-i,\qquad 1\leq i\leq r.
\]
Then
\[
f_{n,r}=\frac{r!}{(1-q)^{\binom r2}}
\sum_{\la\subseteq (r^{2n-r+1})}q^{n(\la)+\abs{\la}/2}
\prod_{1\leq i<j\leq r} \big(1-q^{\la_i'-\la'_j+j-i}\big)
\prod_{i=1}^r\qbin{2n}{\la_i'+r-i}.
\]
By Lemma~\ref{Lem_Schur_PSprime} and the fact that $s_{\la}$ is homogeneous 
of degree $\abs{\la}$, this can be written as a sum over
principally specialised Schur functions. Performing in addition the 
replacement $n\mapsto (n+r-1)/2$, we arrive at
\begin{multline*}
f_{(n+r-1)/2,r}=\frac{r!}{(1-q)^{\binom r2}}
\prod_{1\leq i<j\leq r} \big(1-q^{j-i}\big)
\prod_{i=1}^r\qbin{n+r-1}{r-i}\\
\times
\sum_{\la\subseteq (r^n)}
s_{\la}\big(q^{1/2},q^{3/2},\dots,q^{n-1/2}\big),
\end{multline*}
for $n$ a non-negative integer. (When $n=0$, the sum on the right should
be interpreted as $1$.)
The sum can be computed by \cite[p.~85]{Macdonald95}\footnote{This is 
equivalent to MacMahon's formula \cite{MacMahon98} for the generating
function of symmetric plane partitions that fit in a box of size 
$n\times n\times r$, proved by Andrews \cite{Andrews78} and 
Macdonald \cite{Macdonald95}.}
\[
\sum_{\la\subseteq
(r^n)}s_{\la}\big(q^{1/2},q^{3/2},\dots,q^{n-1/2}\big)
=\prod_{i=1}^n \frac{1-q^{i+(r-1)/2}}{1-q^{i-1/2}}
\prod_{1\leq i<j\leq n} \frac{1-q^{r+i+j-1}}{1-q^{i+j-1}}.
\]
Some elementary simplifications of the $q$-products and
the subsequent replacement $n\mapsto 2n-r+1$ result in
\[
f_{n,r}=\frac {r!} {(1-q)^{\binom r2}}\,
\frac{(q^{(r+1)/2};q)_{2n-r+1}}{(q^{1/2};q)_{2n-r+1}}
\prod_{i=1}^r \frac{(q;q)_{2n}(q;q)_{i-1}(q^i;q^2)_{2n-r+1}}{(q;q)_{2n-i+1}^2}.
\]
To transform this into the claimed product over $q$-gamma functions is 
somewhat delicate. First we use $(a^2;q^2)_n=(a;q)_n(-a;q)_n$ to write
\[
\frac{(q^{(r+1)/2};q)_{2n-r+1}}{(q^{1/2};q)_{2n-r+1}}
\prod_{i=1}^r \frac{(q^i;q^2)_{2n-r+1}}{(q;q)_{2n-i+1}}=
\prod_{i=1}^r \frac{(-q^{i/2};q)_{2n-r+1} 
(q^{(i+1)/2};q)_{2n-r+1}}{(q;q)_{2n-i+1}}.
\]
The first term in the numerator is wanted, but we further need to
transform the
other two terms as follows:
\[
\prod_{i=1}^r \frac{(q^{(i+1)/2};q)_{2n-r+1}}{(q;q)_{2n-i+1}}
=\prod_{i=1}^r \frac{(-q^{1/2};q^{1/2})_{i-1} (q^{3/2};q)_{2n-i+1}}
{(q;q)_{i-1}(q^{i/2+1};q)_{2n-i+1}}\cdot \frac{1-q^{1/2}}{1-q^{i/2}}.
\]
Putting all this together, we get
\[
f_{n,r}=\frac{r!}{(1-q)^{\binom r2}\,[r]_{q^{1/2}}!} \,
\prod_{i=1}^r (-q^{1/2};q^{1/2})_i (-q^{i/2+1};q)_{2n-r} \cdot
\frac{(q;q)_{2n}(q^{3/2};q)_{2n-i+1}}{(q;q)_{2n-i+1}(q^{i/2+1};q)_{2n-i+1}}.
\]
By the definition of the $q$-gamma function, the result now follows.
\end{proof}

\subsection{The evaluation of $\mathcal{S}_{r,n}(2,\frac{1}{2},1)$} 
Again we first consider a $q$-analogue.

\begin{proposition}[$\mathrm{B}_r$ summation]\label{Prop_Br-sum}
Let $0<q<1$, $r$ a positive integer and $n$ an integer or 
half-integer such that $n\geq r-1/2$. Then
\begin{multline}
\sum_{k_1,\dots,k_r=-n}^n \, \prod_{1\leq i<j\leq r} 
\Abs{[k_i-k_j]_q\,[k_i+k_j]_q} \, \prod_{i=1}^r\,
\Abs{[k_i]_q} \, 
q^{\binom{k_i-r+i}{2}-\binom{\ceiling{n}-n}{2}} \qbin{2n}{n+k_i} \\
=2^r r!  \prod_{i=1}^r (-q;q^{1/2})_{2n-2i}\,
\frac{\Gamma_q(i)}{\Gamma_q(\frac{3}{2})}\cdot
\frac{\Gamma_q(2n+1)\,\Gamma_q(\ceiling{n}-i+\frac{3}{2})}
{\Gamma_q(2n-i+2)\,\Gamma_q(\ceiling{n}-i+1)}. 
\end{multline}
\end{proposition}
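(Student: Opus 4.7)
The plan is to mimic the structure of the proof of Proposition~\ref{Prop_A-sum}, replacing the symmetric group $\Symm_r$ by the hyperoctahedral group $\mathrm B_r=(\Z/2\Z)^r\rtimes\Symm_r$ of order $2^rr!$ (which permutes the $k_i$ and independently flips their signs), and replacing the ordinary Schur function by the odd-orthogonal character $\so_{2n+1,\la}$; the resulting partition sum will then be evaluated in closed form by Corollary~\ref{Cor_Bn-box}.

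First I would antisymmetrise. The summand on the left-hand side is $\mathrm B_r$-invariant because of the absolute value signs, and vanishes unless $\abs{k_1},\dots,\abs{k_r}$ are pairwise distinct and all non-zero (this is forced by the factors $\abs{[k_i]_q}$ and $\abs{[k_i\pm k_j]_q}$). The sum therefore reduces to $2^rr!$ times its restriction to the fundamental domain $n\geq k_1>k_2>\cdots>k_r>0$, on which all absolute values may be dropped. I would then parameterise the strictly decreasing tuple by a partition via a substitution $k_i=\la_i'+r-i+\epsilon$, with $\epsilon=1$ when $n$ is an integer and $\epsilon=1/2$ when $n$ is a half-integer; this puts the strict chains in bijection with partitions $\la\subseteq(r^m)$ for an appropriate $m=m(n,r)$. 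The $q$-power $\sum_i\binom{k_i-r+i}{2}=\sum_i\binom{\la_i'+\epsilon}{2}$ expands into $n(\la)+(\epsilon-\tfrac12)\abs{\la}$ plus $\la$-independent constants, and the $-r\binom{\ceiling{n}-n}{2}$ correction built into the summand is engineered precisely to absorb these constants and to leave the prefactor $q^{n(\la)-n\abs{\la}}$ or $q^{n(\la)-(n-1/2)\abs{\la}}$ of the dual principal specialisations \eqref{Eq_so-odd-PS-1} and \eqref{Eq_so-odd-PS-2} of Lemma~\ref{Lem_Bn-PS}.

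Next I would recognise the remaining $\la$-dependent factor as a principally specialised odd-orthogonal character. The product
\[
\prod_{1\leq i<j\leq r}(1-q^{k_i-k_j})(1-q^{k_i+k_j})\prod_{i=1}^r(1-q^{k_i})
\]
is --- via the $\mathrm B_n$-Vandermonde identity \eqref{Eq_Bn-denom} at $x_i=q^{k_i}$ --- the numerator of a type-$\mathrm B$ determinant; together with the $q$-binomials $\qbin{2n}{n+k_i}$ and the corresponding Vandermonde denominator at $x_i=q^{\rho_i}$, it assembles (up to a $\la$-independent prefactor) into $\so_{2n+1,\la}(q,q^2,\dots,q^{\ceiling{n}})$ when $n$ is an integer and $\so_{2n+1,\la}(q^{1/2},q^{3/2},\dots,q^{\ceiling{n}-1/2})$ when $n$ is a half-integer. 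Applying the $\varepsilon=+1$ case of Corollary~\ref{Cor_Bn-box} then produces the closed-form partition sum, and a routine rewriting --- using identities like $(a^2;q^2)_n=(a;q)_n(-a;q)_n$ to extract the factors $(-q;q^{1/2})_{2n-2i}$, in the style of the end of the proof of Proposition~\ref{Prop_A-sum} --- converts the result into the stated $q$-gamma-function form.

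The main obstacle is the careful bookkeeping of $q$-power prefactors and $q$-factorials through these steps, so as to merge the integer and half-integer cases of $n$ into a single uniform identity. The correction $\binom{\ceiling{n}-n}{2}$ in the summand and the $\ceiling{n}$ on the right-hand side are exactly what is needed to make this merging transparent, but verifying that both principal specialisations of Corollary~\ref{Cor_Bn-box} produce compatible closed forms after the necessary factorial manipulations is the delicate part.
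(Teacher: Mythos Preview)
Your overall architecture (hyperoctahedral antisymmetrisation, reparametrisation by a partition, recognition of a principally specialised classical character, then a boxed character sum) is exactly the paper's, but you have misidentified \emph{which} character appears. After restricting to $n\geq k_1>\cdots>k_r>0$ the summand carries the linear factors $\prod_i(1-q^{k_i})$ coming from $\abs{[k_i]_q}$. Compare the dual principal specialisations: equation~\eqref{Eq_so-odd-PS-1} for $\so_{2n+1,\la}(q,\dots,q^n)$ has \emph{no} such linear factor in the $\la'_i$, whereas equation~\eqref{Eq_SpecCnb} for $\symp_{2n,\la}(q,\dots,q^n)$ contains precisely $\prod_i\frac{1-q^{n-\la'_i+i}}{1-q^{n+i}}$. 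So in the integer-$n$ case the summand assembles into a \emph{symplectic} character, not an odd-orthogonal one, and the relevant boxed sum is \eqref{Eq_Sp-heel} from Corollary~\ref{Cor_Cn-box}, not Corollary~\ref{Cor_Bn-box}.

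For half-integer $n$ the situation is also not as you describe. The dual form \eqref{Eq_so-odd-PS-2} carries $\prod_i\frac{1+q^{n-\la'_i+i-1/2}}{1+q^{n+i-1/2}}$, with a plus sign; to match $(1-q^{k_i})$ one must first send $q^{1/2}\mapsto -q^{1/2}$, which introduces the sign $(-1)^{\abs{\la}}$ into the partition sum. The correct identity is therefore the $\varepsilon=-1$ case of \eqref{Eq_B-sum-b}, not the $\varepsilon=+1$ case you propose. In short: the factor $\abs{[k_i]_q}$ is the fingerprint of a $\mathrm{C}_n$-type (symplectic) numerator, not a $\mathrm{B}_n$-type one, and your invocation of the $\mathrm B_n$ Vandermonde \eqref{Eq_Bn-denom} conflates the role of $r$ (the number of summation variables) with the rank of the character's group. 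Once you switch to $\symp_{2m,\la}$ for integer $n$ and to the signed $\so_{2m+1,\la}$ for half-integer $n$, the rest of your outline goes through.
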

Taking the $q\to 1$ limit 
and using $r!\prod_{i=1}^r\Gamma(i)=\prod_{i=1}^r \Gamma(i+1)$,
we obtain (cf.\ \eqref{eq:2anyanygp4}) 
\begin{align}
\mathcal{S}_{r,n}(2,\tfrac {1} {2},1)&:=\sum_{k_1,\dots,k_r=-n}^n\,
\prod_{1\leq i<j\leq r} \Abs{k_i^2-k_j^2} \, 
\prod_{i=1}^r \, \abs{k_i} \binom{2n}{n+k_i} \\
&\hphantom{:}=2^{(2n+1)r-r(r+1)} \prod_{i=1}^r
\frac{\Gamma(1+i)}{\Gamma(\frac {3} {2})}\cdot
\frac{\Gamma(2n+1)\,\Gamma(n-i+\frac {3} {2})}
{\Gamma(2n-i+2)\,\Gamma(n-i+1)}.
\notag
\end{align}
Equation~(5.12) in \cite{BOOPAA} is the
special case $r=2$ of this identity.

\begin{proof}
Once again, the sum will be denoted by $f_{n,r}$.
This time the summand is symmetric under signed permutations 
of the $k_i$. Exploiting this hyperoctahedral symmetry, we obtain
\begin{multline*}
f_{n,r} = \frac {2^r r!} {(1-q)^{r^2}} \sum_{n\geq k_1>\cdots>k_r>0}\,
\prod_{1\leq i<j\leq r} (1-q^{k_i-k_j})(1-q^{k_i+k_j}) \\ \times
\prod_{i=1}^r (1-q^{k_i}) \, 
q^{\binom{k_i-r+i}{2}-\binom{\ceiling{n}-n}{2}} \qbin{2n}{n+k_i}.
\end{multline*}
We now set
\begin{equation}\label{Eq_k-lambda}
k_i=n-i-\la'_{r-i+1}+1,\qquad 1\leq i\leq r,
\end{equation}
where $\la$ is a partition contained in $(r^{\ceiling{n}-r})$. 
If we then replace $n\mapsto \floor{n}+r$ and use the dual $\mathrm{C}_n$
specialisation formula \eqref{Eq_SpecCnb} in the integer-$n$ case or the
dual $\mathrm{B}_n$ specialisation formula \eqref{Eq_so-odd-PS-2} with 
$q^{1/2}\mapsto -q^{1/2}$ in the half-integer case, we get
\[
f_{n+r,r}=\frac {2^r r! \, q^{r\binom{n+1}{2}}} {(1-q)^{r^2}}\, 
\frac{(q;q)_{n+r}}{(q;q)_n}
\prod_{i=1}^r \frac{(q;q)_{2n+2r}}{(q;q)_{2n+2r-2i+2}} 
\sum_{\la\subseteq (r^n)} \symp_{2n,\la}(q,q^2,\dots,q^n)
\]
and
\begin{multline*}
f_{n+r-1/2,r} = \frac {2^r r! \, q^{rn^2/2}} {(1-q)^{r^2}} \,
\frac{(q^{1/2};q)_{n+r}}{(q^{1/2};q)_n}
\prod_{i=1}^r \frac{(q;q)_{2n+2r-1}}{(q;q)_{2n+2r-2i+1}} \\
\times \sum_{\la\subseteq (r^n)} (-1)^{\abs{\la}} 
\so_{2n+1,\la}\big({-}q^{1/2},-q^{3/2},\dots,-q^{n-1/2}\big),
\end{multline*}
where $n$ is a non-negative integer. (The two sums on the right
are again to be interpreted as $1$ when $n=0$.)
By Corollaries~\ref{Cor_Bn-box} and \ref{Cor_Cn-box}, we can carry
out the summations, resulting in
\[
f_{n+r,r}=\frac {2^r r!} {(1-q)^{r^2}} \, \prod_{i=1}^r
\frac{(q;q)_{2n+2r}(q;q)_{2n+i}(q;q)_{i-1}}{(q;q)_{2n+2r-2i+2}(q;q)_{n+r-i}^2}
\]
and
\[
f_{n+r-1/2,r} = \frac {2^r r!} {(1-q)^{r^2}} \, 
\frac{(q^{1/2};q)_{n+r}}{(q^{1/2};q)_n}
\prod_{i=1}^r \frac{(q;q)_{2n+2r-1}(q;q)_{2n+i-1}(q;q)_{i-1}}
{(q;q)_{2n+2r-2i+1}(q;q)_{n+r-i}^2},
 \]
respectively. The replacement $n\mapsto n-r$ or $n\mapsto n-r+1/2$
and some elementary manipulations lead to
\[
f_{n,r}=\frac {2^r r!} {(1-q)^{r^2}} \, \prod_{i=1}^r
\frac{(-q^{1/2};q^{1/2})_{2n-2i+1}
(q^{1/2};q)_{\ceiling{n}-i+1}(q;q)_{2n}(q;q)_{i-1}}
{(q;q)_{2n-i+1}(q;q)_{\ceiling{n}-i}}.
\]
The proof is completed by writing this in terms of $q$-gamma functions.
\end{proof}

\subsection{The evaluation of $\mathcal{S}_{r,n}(2,\frac{1}{2},0)$} 

We first restate Proposition~\ref{Prop_Bn}, now including the
half-integral case (cf.\ \eqref{eq:2anyanygp4}).
\begin{proposition}[$\mathrm{D}_r$ summation]
Let $r$ be a positive integer and $n$ an integer or half-integer such 
that $n\geq r-1$. Then
\begin{align}\label{Eq_Dr-sum}
\mathcal{S}_{r,n}(2,\tfrac{1}{2},0)&=
\sum_{k_1,\dots,k_r=-n}^n\,
\prod_{1\leq i<j\leq r} \Abs{k_i^2-k_j^2} \,
\prod_{i=1}^r\binom{2n}{n+k_i} \\
&=2^{2rn-r(r-1)}\, 
\frac{\Gamma(1+\frac{1}{2}r)}{\Gamma(\frac{3}{2})}\cdot
\frac{\Gamma(\floor{n}-\frac{1}{2}r+\frac{3}{2})}{\Gamma(\floor{n}+1)} 
\notag \\
&\qquad\qquad\quad\times 
\prod_{i=1}^{r-1} \frac{\Gamma(i+1)}{\Gamma(\frac{3}{2})} \cdot
\frac{\Gamma(2n+1)\,\Gamma(\floor{n}-i+\frac{3}{2})}
{\Gamma(2n-i+1)\,\Gamma(\floor{n}-i+1)}. \notag
\end{align}
\end{proposition}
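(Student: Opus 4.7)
The proof I would give follows the strategy of Propositions~\ref{Prop_A-sum} and \ref{Prop_Br-sum}: hyperoctahedral anti-symmetrisation, a substitution converting the multi-sum into a partition sum, identification with a principally specialised character sum, and evaluation via one of the Okada-type identities of Section~\ref{sec:Okada}. The novelty compared with the $\mathrm{B}_r$ case is that for integer $n$ the factor $\prod_i\abs{k_i}^{\delta}$ is absent, so $k_r=0$ is permitted and the hyperoctahedral orbits of the summand come in two different sizes.

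Since the summand is invariant under signed permutations of $(k_1,\ldots,k_r)$ and vanishes unless the $\abs{k_i}$ are pairwise distinct, at most one $k_i$ equals zero. Orbits with $k_r>0$ have cardinality $2^rr!$ and those with $k_r=0$ have cardinality $2^{r-1}r!$, so for integer $n$
\[
\mathcal{S}_{r,n}(2,\tfrac{1}{2},0)=2^{r-1}r!\sum_{n\geq a_1>\cdots>a_r\geq 0}u'_a\prod_{1\leq i<j\leq r}(a_i^2-a_j^2)\prod_{i=1}^r\binom{2n}{n+a_i},
\]
with $u'_a=2$ when $a_r>0$ and $u'_a=1$ when $a_r=0$; for half-integer $n$ the condition $a_r=0$ is impossible and the uniform prefactor $2^rr!$ applies. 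Next I apply the substitution $a_i=n-i+1-\lambda'_{r-i+1}$ used in the proof of Proposition~\ref{Prop_Br-sum}. In the integer case this indexes the sum by partitions $\lambda\subseteq(r^{n'})$ where $n':=n-r+1$, with $a_r=0$ corresponding to $l(\lambda)=n'$; in the half-integer case one has $\lambda\subseteq(r^{n'})$ with $n':=n-r+\tfrac{1}{2}$. A direct calculation yields
\[
\prod_{i<j}(a_i^2-a_j^2)=\prod_{i<j}(\lambda'_i-\lambda'_j+j-i)(2n-2r+i+j-\lambda'_i-\lambda'_j),\quad\prod_i\binom{2n}{n+a_i}=\prod_i\binom{2n}{\lambda'_i+r-i}.
\]
Since $2n'+2r-2=2n$ in the integer case and $2n'+2r-1=2n$ in the half-integer case, this product matches, up to a $\lambda$-independent factor, the dual principal specialisation at $q=1$ of $\ortho_{2n',\lambda}(1^{n'})$ from \eqref{Eq_ortho-PS-dual} or of $\so_{2n'+1,\lambda}(1^{n'})$ from \eqref{Eq_so-odd-PS-1} respectively.

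The crucial observation for the integer case is that the weight $u'_\lambda$ is complementary to the multiplicity factor $u_\lambda\in\{1,2\}$ of \eqref{Eq_o2n}, which via \eqref{Eq_ortho-so-even} is built into the dual formula \eqref{Eq_ortho-PS-dual}: $u'_\lambda=2,\,u_\lambda=1$ when $l(\lambda)<n'$ and $u'_\lambda=1,\,u_\lambda=2$ when $l(\lambda)=n'$, so $u'_\lambda\ortho_{2n',\lambda}(1^{n'})=2\so_{2n',\lambda}(1^{n'})$ uniformly in $\lambda$. The sum therefore reduces to $2\sum_{\lambda\subseteq(r^{n'})}\so_{2n',\lambda}(1^{n'})$, which is evaluated by specialising Corollary~\ref{Cor_Dn-box-New}\eqref{Eq_ortho-Okada-spec-1} at $q=1$, itself a consequence of Okada's identity \eqref{Eq_sososo-2}. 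In the half-integer case no such correction is needed and the sum reduces directly to $\sum_{\lambda\subseteq(r^{n'})}\so_{2n'+1,\lambda}(1^{n'})$, evaluated by Corollary~\ref{Cor_Bn-box}\eqref{Eq_B-sum-a} at $q=1$.

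The main obstacle I anticipate is the final bookkeeping: combining the $\lambda$-independent prefactors $\prod_i\binom{2n}{r-i}\prod_{i<j}(j-i)(2n-2r+i+j)$ with the Pochhammer/$\Gamma$-function output of Okada's evaluation to recover the compact expression on the right-hand side of \eqref{Eq_Dr-sum}, and verifying that the integer and half-integer cases unify into the single formula containing $\floor{n}$. The explicit vanishing of both sides for $n\leq r-2$ (via the zero of $1/\Gamma(\floor{n}-i+1)$ at $i=r-1$ noted after the statement) provides a useful boundary consistency check.
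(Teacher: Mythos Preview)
Your proposal is correct and follows essentially the same approach as the paper: hyperoctahedral anti-symmetrisation with the orbit-size correction $(1-\tfrac12\delta_{k_r,0})=u_\lambda^{-1}$ (your $u'_\lambda=2u_\lambda^{-1}$), the substitution \eqref{Eq_k-lambda}, identification via \eqref{Eq_ortho-PS-dual} and \eqref{Eq_ortho-so-even} (integer~$n$) or \eqref{Eq_so-odd-PS-1} (half-integer~$n$), and evaluation by the $q=1$ cases of Corollaries~\ref{Cor_Dn-box-New} and~\ref{Cor_Bn-box}. The only cosmetic difference is that the paper cites \eqref{Eq_B-sum-b} with $\varepsilon=1$ rather than \eqref{Eq_B-sum-a} in the half-integer case, which of course coincide at $q=1$.
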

As already pointed out in the introduction, the special cases
$r=2,3,4$ cover \cite[Theorem 1]{BO}, and
Theorem~4.1 and Conjecture~4.1 in \cite{BOOPAA}, respectively.

When $n$ is a half-integer, the identity \eqref{Eq_Dr-sum} admits a 
$q$-analogue:
\begin{multline}
\sum_{k_1,\dots,k_r=-n}^n \,
\prod_{1\leq i<j\leq r} \Abs{[k_i-k_j]_q\,[k_i+k_j]_q} \, 
\prod_{i=1}^r q^{\binom{k_i-r+i+1/2}{2}} \qbin{2n}{n+k_i} \\
=2^r\, [2]_q^n \, 
\frac{r!}{[r]_q!} \cdot \frac{1}{(-q;q)_{n-r}}
\prod_{i=1}^r (-q;q^{1/2})_{2n-2i}  
\frac{\Gamma_{q^2}(1+\frac{1}{2}r)}{\Gamma_q(\frac{3}{2})}
\prod_{i=1}^{r-1} \frac{\Gamma_q(i+1)}{\Gamma_q(\frac{3}{2})} \\
\times \frac{\Gamma_{q^2}(n-\frac{1}{2}r+1)}
{\Gamma_q(n+\frac{1}{2})}
\prod_{i=1}^{r-1} \frac{\Gamma_q(2n+1)\,\Gamma_q(n-i+1)}
{\Gamma_q(2n-i+1)\,\Gamma_q(n-i+\frac{1}{2})}.
\end{multline}

\begin{proof}
As usual, we denote the sum on the left by $f_{n,r}$. Due to the
hyperoctahedral symmetry of the summand, we have
\[
f_{n,r}=
r!\cdot 2^r \sum_{n\geq k_1>\cdots>k_r\geq 0}(1-\tfrac{1}{2}\delta_{k_r,0})
\prod_{1\leq i<j\leq r} (k_i^2-k_j^2) \prod_{i=1}^r \binom{2n}{n+k_i}.
\]
Since $n+k_r$ must be an integer, the effective lower
bound is $1/2$ when $n$ is a half-integer. In this case
$(1-\frac{1}{2}\delta_{k_r,0})=1$.

Again we make the variable change \eqref{Eq_k-lambda}.
Due to the different lower bound compared to the $\mathrm{B}_r$ summation 
in Proposition~\ref{Prop_Br-sum}, this means that we will now be summing over 
partitions $\la$ contained in $r^{\floor{n}-r+1}$.
We also note that in the integer-$n$ case 
$(1-\frac{1}{2}\delta_{k_r,0})$ transforms into
\begin{equation}\label{Eq_delta-factor}
(1-\tfrac{1}{2}\delta_{\la'_1,n-r+1})=
(1-\tfrac{1}{2}\delta_{l(\la),n-r+1}).
\end{equation}
Next we replace $n\mapsto \ceiling{n}+r-1$. Note that this turns
\eqref{Eq_delta-factor} into
\[
(1-\tfrac{1}{2}\delta_{l(\la),n})=u_{\la}^{-1},
\]
with $u_{\la}$ as in \eqref{Eq_o2n}.
In the integer-$n$ case, we can then use
\eqref{Eq_ortho-PS-dual} for $q=1$ combined with
\eqref{Eq_ortho-so-even} to find
\[
f_{n+r-1,r}=2^r r! \,  
\prod_{i=1}^r\frac{(2n+2r-2)!}{(2n+2r-2i)!} \, \sum_{\la\subseteq (r^n)}
\so_{2n,\la}(1^n).
\]
In the half-integer case, we can use the $q=1$ instance of
\eqref{Eq_so-odd-PS-1}. This results in
\begin{equation}\label{Eq_Dr-halfq1-a}
f_{n+r-1/2,r}=2^r r! \,
\prod_{i=1}^r \frac{(2n+2r-1)!}{(2n+2r-2i+1)!} \, 
\sum_{\la\subseteq (r^n)} \so_{2n+1,\la}(1^n).
\end{equation}
As before, the sums on the right are $1$ when $n=0$.
Evaluation of these sums for general $n$ by the $q=1$ cases of 
\eqref{Eq_ortho-Okada-spec-1} and \eqref{Eq_B-sum-b} 
(with $\varepsilon=1$), respectively, gives
\[
f_{n+r-1,r}=2^{2n+r-1} \,
\frac{(\frac{1}{2}r+\frac{1}{2})_n}{n!} 
\prod_{i=1}^r\frac{(2n+2r-2)!}{(2n+2r-2i)!}
\prod_{i=1}^{r-1}\frac{(2n+i-1)!\,(i+1)!}{(n+r-i)!\,(n+r-i-1)!}
\]
and
\begin{equation}\label{Eq_Dr-halfq1-b}
f_{n+r-1/2,r}(q)=2^r \, 
\frac{(\frac{1}{2}r+\frac{1}{2})_n}{(\frac{1}{2})_n} 
\prod_{i=1}^r \frac{(2n+2r-1)!\,(2n+i-1)!\,i!}{(2n+2r-2i+1)!\,(n+r-i)!^2}.
\end{equation}
Replacing $n\mapsto n-r+1$ or $n\mapsto n-r+1/2$ and then expressing
$f_{n,r}$ in terms of gamma functions, we arrive at the right-hand side of 
\eqref{Eq_Dr-sum}.

The proof of the $q$-case for half-integer $n$ proceeds along exactly
the same lines, with \eqref{Eq_Dr-halfq1-a} replaced by
\[
f_{n+r-1/2,r}(q)=\frac {2^r r!} {(1-q)^{r^2-r}} \, q^{r\binom{n+1}{2}} 
\prod_{i=1}^r \frac{(q;q)_{2n+2r-1}}{(q;q)_{2n+2r-2i+1}} \,
\sum_{\la\subseteq r^n} \so_{2n+1,\la}(q,q^2,\dots,q^n).
\]
and \eqref{Eq_Dr-halfq1-b} by
\[
f_{n+r-1/2,r}(q)=\frac {2^r r!} {(1-q)^{r^2-r}} \, 
\frac{(q^{r+1};q^2)_n}{(q;q^2)_n}
\prod_{i=1}^r \frac{(q;q)_{2n+2r-1}(q;q)_{2n+i-1}(q;q)_{i-1}}
{(q;q)_{2n+2r-2i+1}(q;q)_{n+r-i}^2}.  \qedhere
\]
\end{proof}

\subsection{The evaluation of $\mathcal{S}_{r,n}(2,\frac{1}{2},2)$} 

This is the $(\alpha,\gamma)=(2,1/2)$ case in Table~\ref{Table_ten}.
It has no interpretation in terms of finite reflection groups. 
It is also the only case that apparently 
does not admit a simple closed-form product formula for half-integer $n$.

\begin{proposition}
Let $0<q<1$, $r$ a positive integer and $n$ an integer such that $n\geq r$. 
Then
\begin{multline}
\sum_{k_1,\dots,k_r=-n}^n  \, \prod_{1\leq i<j\leq r} 
\Abs{[k_i-k_j]_q\,[k_i+k_j]_q} 
\prod_{i=1}^r \, [k_i]_q^2 \, q^{(k_i-r+i-1)^2/2} 
\qbin{2n}{n+k_i} \\
=2^r  \, \frac{r!}{[r]!_q}  \cdot 
\frac{(-1;q^{1/2})_{r+1} (-q^{r/2+1};q)_{n-r}}{(-1;q)_{n-r}} 
\prod_{i=1}^{r+1} \frac{(-q^{1/2};q^{1/2})_{i-1}(-q^{1/2};q^{1/2})_{2n-i-r}}
{(-q^{1/2};q^{1/2})_{2n-2i+2}} \\ 
\times
\prod_{i=1}^r
\frac{\Gamma_q^2(1+\tfrac{1}{2}i)}{\Gamma_q(\tfrac{1}{2})}\cdot
\frac{\Gamma_q(2n+1)\,\Gamma_q(n-i+\frac{3}{2})}
{\Gamma_q(n-i+1)\,\Gamma_q^2(n-\frac{1}{2}i+1)}.
\end{multline}
\end{proposition}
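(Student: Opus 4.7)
I would follow the same overall template as in the proofs of Propositions~\ref{Prop_A-sum} and~\ref{Prop_Br-sum}, adapted to handle the new factor $[k_i]_q^2$. Writing the sum on the left as $f_{n,r}$, the first step is to exploit the hyperoctahedral symmetry of the summand: once the Gaussian exponent is rewritten to separate a $\B_r$-invariant quadratic part in the $k_i$ from a linear correction that can be pulled out as an overall prefactor, the summand becomes invariant under permutations and sign changes of the $k_i$. Since $[k_i]_q^2$ vanishes at $k_i=0$, this reduces $f_{n,r}$ to $2^r\,r!$ times a sum over strict chains $n\geq k_1>k_2>\cdots>k_r\geq 1$. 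I would then apply the substitution $k_i=n-i-\lambda'_{r-i+1}+1$ to put the $k_i$'s into bijection with partitions $\lambda\subseteq(r^{n-r})$, and subsequently normalise via $n\mapsto n+r$, after which the summand splits into a $\lambda$-independent prefactor and a $\lambda$-dependent expression indexed by $\lambda\subseteq(r^n)$.

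The crucial new step is identifying that $\lambda$-dependent expression. The factor $\prod_i[k_i]_q^2$, under the change of variables, contributes two copies of $(1-q^{n-\lambda'_i+i})$, whereas a single principal specialisation $\symp_{2n,\lambda}(q,q^2,\dots,q^n)$ supplies only one such copy via \eqref{Eq_SpecCnb}. I therefore expect the $\lambda$-sum to be identifiable as a product of two principal specialisations of characters of the same $\lambda$---for example $\symp_{2n,\lambda}(q,q^2,\dots,q^n)\cdot\symp_{2n,\lambda}(q^{1/2},q^{3/2},\dots,q^{n-1/2})$, whose combined $\lambda$-dependent numerators produce $\prod_i(1-q^{n-\lambda'_i+i})^2$ together with a surplus $\prod_i(1+q^{n-\lambda'_i+i})$ that should be matched against the $(-q^{1/2};q^{1/2})$-type Pochhammer pieces appearing on the right-hand side. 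The $\Gamma_q^2(1+i/2)$ and $\Gamma_q^2(n-i/2+1)$ factors in the claimed closed form strongly support this product-of-two-characters interpretation.

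With this identification in place, the resulting $\lambda$-sum should be evaluable using a suitable Okada-type result, possibly requiring the combined use of Theorems~\ref{Thm_sososo} and~\ref{Thm_spspso} (or a mild refinement of them supplying a bilinear analogue for the product of two $\symp$-characters); alternatively, the sum might be recognisable as a specialisation of a $\BC_r$-type character identity. Once the $\lambda$-sum is evaluated, the remainder of the argument is the usual lengthy-but-routine rearrangement of $q$-shifted factorials into the claimed product of $q$-gamma functions and $q$-Pochhammer symbols, along the lines of the final paragraphs of the proofs of Propositions~\ref{Prop_A-sum} and~\ref{Prop_Br-sum}. The main obstacle is pinning down the right character-theoretic identification in the middle step: $[k_i]_q^2$ is not the square of a single Weyl-denominator-type expression, and the delicate matching of the surplus $(1+q^{\cdot})$ factors with the $(-q^{1/2};q^{1/2})$ products on the right-hand side will require careful bookkeeping.
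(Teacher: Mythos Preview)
Your proposal has a genuine gap in the crucial identification step. You suggest that after the variable change the $\lambda$-dependent part should be a \emph{product} of two principally specialised characters, say $\symp_{2n,\lambda}(q,\dots,q^n)\cdot\symp_{2n,\lambda}(q^{1/2},\dots,q^{n-1/2})$. But each such character, in its dual form (Lemma~\ref{Lemma_Cn}), contains the full double product $\prod_{i<j}(1-q^{\lambda'_i-\lambda'_j+j-i})(1-q^{2n-\lambda'_i-\lambda'_j+i+j})$. The summand of $f_{n,r}$ contains that double product only \emph{once}: the factor $\prod_{i<j}\abs{[k_i-k_j]_q[k_i+k_j]_q}$ is not squared. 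Hence the summand cannot factor as a product of two characters of the same shape, and no Okada-type or Cauchy-type bilinear identity will rescue this. The extra factor $\prod_i[k_i]_q^2$ does not behave like a second Weyl denominator; it is a rank-one extension of the existing one.

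The paper's approach exploits exactly this observation. One introduces a phantom variable $k_{r+1}:=0$ and notes that
\[
\prod_{i=1}^r[k_i]_q^2=\prod_{i=1}^r\Abs{[k_i-k_{r+1}]_q\,[k_i+k_{r+1}]_q},
\]
so the entire summand becomes, up to a factor of $\qbin{2n}{n}^{-1}$, the $\mathrm{D}_{r+1}$-type summand
\[
\prod_{1\le i<j\le r+1}\Abs{[k_i-k_j]_q\,[k_i+k_j]_q}\,\prod_{i=1}^{r+1}q^{(k_i-r+i-1)^2/2}\qbin{2n}{n+k_i}
\]
with the last coordinate frozen at zero. After anti-symmetrisation and the substitution $k_i=n-i-\lambda'_{r-i+2}+1$ for $1\le i\le r+1$, the constraint $k_{r+1}=0$ translates into $\lambda'_1=n-r$, i.e.\ $l(\lambda)=n$ after the shift $n\mapsto n+r-1$. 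Using the dual specialisation~\eqref{Eq_ortho-PS-dual}, the $\lambda$-sum is then identified as
\[
\sum_{\substack{\lambda\subseteq(r^n)\\ l(\lambda)=n}}\ortho_{2n,\lambda}\big(q^{1/2},q^{3/2},\dots,q^{n-1/2}\big),
\]
a sum over a \emph{single} family of even-orthogonal characters with a length restriction, which is evaluated in closed form by~\eqref{Eq_ortho-Okada-spec-2}. The remaining manipulations are of the routine kind you describe. The key idea you are missing, then, is not a bilinear character identity but the $k_{r+1}=0$ trick that promotes $r$ to $r+1$ and converts the $[k_i]_q^2$ factor into part of an enlarged Vandermonde product.
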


In the $q\to 1$ limit, this becomes (cf.\ \eqref{eq:2anyanygp4})
\begin{align}
\mathcal{S}_{r,n}(2,\tfrac{1}{2},2)&=
\sum_{k_1,\dots,k_r=-n}^n\,
\prod_{1\leq i<j\leq r} \Abs{k_i^2-k_j^2} \, 
\prod_{i=1}^r k_i^2 \binom{2n}{n+k_i} \\
&=2^r\, \prod_{i=1}^r
\frac{\Gamma^2(1+\tfrac{1}{2}i)}{\Gamma(\tfrac{1}{2})}\cdot
\frac{\Gamma(2n+1)\,\Gamma(n-i+\frac{3}{2})}
{\Gamma(n-i+1)\,\Gamma^2(n-\frac{1}{2}i+1)}.
\end{align}
Equation~(5.13) in \cite{BOOPAA} is the
special case $r=2$ of this identity.

\begin{proof}
If we denote the sum on the left by $f_{n,r}$ and define $k_{r+1}:=0$, then 
the summand of $f_{n,r}$ can be rewritten as
\[
\qbin{2n}{n}^{-1} \prod_{1\leq i<j\leq r+1} 
\Abs{[k_i-k_j]_q\,[k_i+k_j]_q} \,
\prod_{i=1}^{r+1} q^{(k_i-r+i-1)^2/2} \qbin{2n}{n+k_i}.
\]
Hence, after anti-symmetrisation and the variable change
\[
k_i=n-i-\la'_{r-i+2}+1,\qquad 1\leq i\leq r+1
\]
(so that $\la'_1:=n-r$), we obtain
\begin{multline*}
f_{n+r-1,r-1}=\frac{2^{r-1} (r-1)! \, q^{rn^2/2}} {(1-q)^{r^2+r}}
\, \qbin{2n+2r-2}{n+r-1}^{-1} 
\prod_{i=1}^r \frac{(q;q)_{2n+2r-2}}{(q;q)_{2n+2i-2}} \\ \times
\sum_{\substack{\la\subseteq (r^n) \\[1pt] l(\la)=n}}
\ortho_{2n,\la}(q^{1/2},q^{3/2},\dots,q^{n-1/2}),
\end{multline*}
where we have also used \eqref{Eq_ortho-PS-dual}.
Next we apply \eqref{Eq_ortho-Okada-spec-2} so that
\begin{multline*}
f_{n+r-1,r-1}=2^r (r-1)! \,
\frac{(-q^{1/2};q^{1/2})_{2n-1} (-q^{(r+1)/2};q)_n}{(-1;q)_n}\\
\times \frac{(q^{r/2};q)_n}{(q;q)_n}
\qbin{2n+2r-2}{n+r-1}^{-1} 
\prod_{i=1}^{r-1} \frac{(q;q)_{2n+2r-2}(q;q)_{2n+i-1}(q;q)_i}
{(q;q)_{2n+2r-2i-2}(q;q)_{n+r-i}(q;q)_{n+r-i-1}}.
\end{multline*}
The rest follows as in earlier cases.
\end{proof}

\section{Discrete Macdonald--Mehta integrals for $\gamma=1$ and $\alpha=2$}
\label{sec:gamma=1-2}

In this section, we present our results concerning evaluations of 
$S_{r,n}(\alpha,\gamma,\delta)$ for $\gamma=1$ and $\alpha=2$. 
In contrast to the previous section, where identities for classical group
characters played a key role, here our starting point is a transformation 
formula for elliptic hypergeometric series.
Along the lines of Section~\ref{sec:gamma=1/2}, in each
case we shall start with a $q$-analogue, from which the evaluations of 
$S_{r,n}(2,1,\delta)$ for $\delta=0,1,2,3$ follow by a straightforward 
$q\to 1$ limit.
An additional feature is that the identities in this section typically
contain an additional parameter.

{\allowdisplaybreaks
\medskip
We start with the $p=0$, $x=q$ special case of a transformation
formula originally conjectured by the third author 
\cite[Conjecture~6.1]{WarnAG} and proven independently by 
Rains~\cite[Theorem~4.9]{RainAA} and
by Coskun and Gustafson~\cite{CoGuAA}.

\begin{theorem} \label{thm:Rains}
Let $a,b,c,d,e,f$ be indeterminates, $m$ a non-negative integer,
and $r\ge 1$. Then 
\begin{multline} \label{eq:Rains}
\sum_{0\leq k_1<k_2<\dots<k_r\leq m}
q^{\sum_{i=1}^r(2i-1)k_i}\prod_{1\leq i<j\leq r}
(1-q^{k_i-k_j})^2\,(1-aq^{k_i+k_j})^2\\\times
\prod_{i=1}^r\frac{(1-aq^{2k_i})\,(a,b,c,d,e,f,
\la aq^{2-r+m}/ef,q^{-m};q)_{k_i}}
{(1-a)\,(q,aq/b,aq/c,aq/d,aq/e,aq/f,efq^{r-1-m}/\la,aq^{1+m};q)_{k_i}}\\
=\prod_{i=1}^r\frac{(b,c,d,ef/a;q)_{i-1}}
{(\la b/a,\la c/a,\la d/a,ef/\la;q)_{i-1}}
\kern6cm
\\\times
\prod_{i=1}^r\frac{(aq;q)_m\,(aq/ef;q)_{m-r+1}\,
(\la q/e,\la q/f;q)_{m-i+1}}
{(\la q;q)_m\,(\la q/ef;q)_{m-r+1}\,
(aq/e,aq/f;q)_{m-i+1}}\\\times
\sum_{0\leq k_1<k_2<\dots<k_r\leq m}
q^{\sum_{i=1}^r(2i-1)k_i}\prod_{1\leq i<j\leq r}
(1-q^{k_i-k_j})^2\,(1-\la q^{k_i+k_j})^2\\\times
\prod_{i=1}^r\frac{(1-\la q^{2k_i})\,(\la,\la b/a,\la c/a,\la d/a,e,f,
\la aq^{2-r+m}/ef,q^{-m};q)_{k_i}}
{(1-\la)\,(q,aq/b,aq/c,aq/d,\la q/e,\la q/f,efq^{r-1-m}/a,
\la q^{1+m};q)_{k_i}},
\end{multline}
where $\la=a^2q^{2-r}/bcd$.
\end{theorem}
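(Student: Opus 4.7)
The plan is to obtain \eqref{eq:Rains} as a direct specialisation of the general multivariable elliptic hypergeometric transformation originally conjectured in \cite[Conjecture~6.1]{WarnAG} and proved independently in \cite[Theorem~4.9]{RainAA} and in \cite{CoGuAA}. That general identity carries, in addition to the eight parameters $a,b,c,d,e,f,\la,q$ visible in \eqref{eq:Rains}, an elliptic nome $p$ and an extra base parameter $x$, and is expressed in terms of theta functions (equivalently, elliptic shifted factorials). The balancing condition $\la=a^2q^{2-r}/bcd$ stated in the theorem is exactly the balancing condition of the general identity, so no reparametrisation is needed.

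First I would take the limit $p\to 0$. Under this limit the elliptic theta function $\theta(z;p)$ collapses to $1-z$, and consequently every elliptic shifted factorial $(u;q,p)_n$ appearing in the general transformation becomes an ordinary $q$-shifted factorial $(u;q)_n$. This already reduces the elliptic transformation to a basic ($q$-)hypergeometric transformation of the same root-system type. Next I would set $x=q$. With that choice, the summand index set on both sides, which in its general form is indexed by partitions fitting in a rectangular shape dilated by powers of $x$, degenerates to strictly increasing integer tuples $0\leq k_1<\cdots<k_r\leq m$, giving precisely the range appearing in \eqref{eq:Rains}. Each factor $(1-q^{k_i-k_j})^2(1-aq^{k_i+k_j})^2$ appearing on the left (resp.\ with $a$ replaced by $\la$ on the right) comes directly from the $\BC_r$ Weyl-denominator piece of the general summand after these two specialisations.

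The remaining step is to verify that, after the two specialisations, the closed-form prefactor in front of the right-hand side sum agrees with the displayed product $\prod_{i=1}^r (b,c,d,ef/a;q)_{i-1}/(\la b/a,\la c/a,\la d/a,ef/\la;q)_{i-1}$ times the four ratios $(aq;q)_m/(\la q;q)_m$, $(aq/ef;q)_{m-r+1}/(\la q/ef;q)_{m-r+1}$, and $(\la q/e,\la q/f;q)_{m-i+1}/(aq/e,aq/f;q)_{m-i+1}$. This is a mechanical reduction: in the elliptic form the prefactor is a ratio of elliptic gamma functions, and at $p=0$ each elliptic gamma simplifies to a quotient of two $(q;q)_\infty$-type infinite products, which telescope into finite $q$-Pochhammers of length $m-i+1$ or $m-r+1$.

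The main obstacle is not conceptual but notational: the three references \cite{WarnAG,RainAA,CoGuAA} use slightly different normalisations for the free parameters, so the bulk of the work is cross-checking conventions and verifying that, after the substitutions $p=0$ and $x=q$, the parameter dictionary aligns perfectly with \eqref{eq:Rains}. Since the identity being stated is quoted from established work, I would not attempt an independent proof; I would present only the specialisation calculation, flagging the dictionary matching as the only step that requires care.
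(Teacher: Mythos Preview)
Your proposal is correct and matches the paper's treatment exactly: the paper does not give an independent proof of Theorem~\ref{thm:Rains} but simply introduces it as ``the $p=0$, $x=q$ special case of a transformation formula originally conjectured by the third author \cite[Conjecture~6.1]{WarnAG} and proven independently by Rains~\cite[Theorem~4.9]{RainAA} and by Coskun and Gustafson~\cite{CoGuAA}.'' Your description of how the two specialisations degenerate the elliptic identity to \eqref{eq:Rains} is accurate and in fact supplies more detail than the paper itself.
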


In the above formula, we let $m\to\infty$ to obtain
\begin{multline} \label{eq:Rains2}
\sum_{0\leq k_1<k_2<\dots<k_r}
q^{\sum_{i=1}^r(2i-1)k_i}\prod_{1\leq i<j\leq r}
(1-q^{k_i-k_j})^2\,(1-aq^{k_i+k_j})^2\\\times
\prod_{i=1}^r \Big(\frac{a^2}{q^{2r-3}bcdef}\Big)^{k_i}
\frac{(1-aq^{2k_i})\,(a,b,c,d,e,f;q)_{k_i}}
{(1-a)\,(q,aq/b,aq/c,aq/d,aq/e,aq/f;q)_{k_i}}\\
=\prod_{i=1}^r\frac{(b,c,d,ef/a;q)_{i-1}}
{(\la b/a,\la c/a,\la d/a,ef/\la;q)_{i-1}}
\frac{(aq,aq/ef,\la q/e,\la q/f;q)_\infty}
{(\la q,\la q/ef,aq/e,aq/f;q)_\infty}\\\times
\sum_{0\leq k_1<k_2<\dots<k_r}
q^{\sum_{i=1}^r(2i-1)k_i}\prod_{1\leq i<j\leq r}
(1-q^{k_i-k_j})^2\,(1-\la q^{k_i+k_j})^2\\\times
\prod_{i=1}^r \Big(\frac{a}{q^{r-1}ef}\Big)^{k_i}
\frac{(1-\la q^{2k_i})\,(\la,\la b/a,\la c/a,\la d/a,e,f;q)_{k_i}}
{(1-\la)\,(q,aq/b,aq/c,aq/d,\la q/e,\la q/f;q)_{k_i}}.
\end{multline}
The two specialisations which are relevant for us are 
$b = aq/c$ and $b = aq^2/c$.
The case $b=aq/c$ (which has the effect of generating terms
$(\la d/a;q)_{k_i}=(q^{1-r};q)_{k_i}$ in the right-hand side sum
of \eqref{eq:Rains2}, in turn implying that the only choice for
the summation indices $k_i$ to produce a non-vanishing summand
is $k_i=i-1$ for $i=1,2,\dots,r$) gives
\begin{multline} \label{eq:Rains3}
\sum_{0\leq k_1<k_2<\dots<k_r}
q^{\sum_{i=1}^r(2i-1)k_i}\prod_{1\leq i<j\leq r}
(1-q^{k_i-k_j})^2\,(1-aq^{k_i+k_j})^2\\\times
\prod_{i=1}^r \Big(\frac {a} {q^{2r-2}def}\Big)^{k_i}
\frac{(1-aq^{2k_i})\,(a,d,e,f;q)_{k_i}}
{(1-a)\,(q,aq/d,aq/e,aq/f;q)_{k_i}}\\
=q^{-\binom r3}\Big(\frac {a} {ef}\Big)^{\binom r2}
\prod_{i=1}^r\frac{(q,d,e,f,ef/a,aq^{i-r}/d;q)_{i-1}\,(aq^{2-r}/d;q)_{2i-2}}
{(aq/d,aq^{2-r}/de,aq^{2-r}/df,defq^{r-1}/a;q)_{i-1}}
\kern3cm
\\\times
\prod_{i=1}^r\frac{(aq,aq/ef,aq^{2-r}/de,aq^{2-r}/df;q)_\infty}
{(aq^{2-r}/d,aq^{2-r}/def,aq/e,aq/f;q)_\infty}.
\end{multline}
The case where $b=aq^2/c$ (which has the effect of generating terms
$(\la d/a;q)_{k_i}=(q^{-r};q)_{k_i}$ in the right-hand side sum
of \eqref{eq:Rains2}, in turn implying that the only choices for
the summation indices $k_i$ to produce a non-vanishing summand
are $k_i=i-1+\chi(i>s)$ for some non-negative integer~$s$, 
and $i=1,2,\dots,r$; 
here, $\chi(\mathcal A)=1$ if $\mathcal A$ is true and $\chi(\mathcal
A)=0$ otherwise) gives
\begin{multline} \label{eq:Rains4}
\sum_{0\leq k_1<k_2<\dots<k_r}
q^{\sum_{i=1}^r(2i-1)k_i}\prod_{1\leq i<j\leq r}
(1-q^{k_i-k_j})^2\,(1-aq^{k_i+k_j})^2\\\times
\prod_{i=1}^r \Big(\frac {a} {q^{2r-1}def}\Big)^{k_i}
\frac{(1-aq^{2k_i})\,(1-cq^{k_i-1})\,(1-aq^{k_i+1}/c)\,(a,d,e,f;q)_{k_i}}
{(1-a)\,(1-c/q)\,(1-aq/c)\,(q,aq/d,aq/e,aq/f;q)_{k_i}}\\
=(-1)^r q^{2\binom {r+1}3}
\Big(\frac {a} {q^{r-1}ef}\Big)^{\binom {r+1}2}
\frac   {(aq^{2-r}/cd)_r\,
     (cq^{-r}/d)_r} {(1-a q/c)^r\,
         (1-c/q)^r\,(q;q)_r\,(aq^{1-r}/d)_r^2}
\kern2cm\\
\times
\prod_{i=1}^r\frac{(q,e,f,aq^{i-r}/d;q)_{i}\,(d,ef/a;q)_{i-1}\,
                   (aq^{1-r}/d;q)_{2i}}
{(aq/d,aq^{1-r}/de,aq^{1-r}/df;q)_{i}\,(defq^{r}/a;q)_{i-1}}
\kern3cm
\\\times
\prod_{i=1}^r\frac{(aq,aq/ef,aq^{1-r}/de,aq^{1-r}/df;q)_\infty}
{(aq^{1-r}/d,aq^{1-r}/def,aq/e,aq/f;q)_\infty}\\
\times
\sum_{s=0}^r
  \frac {\big(1-\frac{aq^{-r}}{d}\,q^{2s}\big) } 
        {\big(1-\frac{aq^{-r}}{d}\big)}
\frac {(aq^{-r}/d,c/q,aq/c,aq^{1-r}/de,aq^{1-r}/df,q^{-r};q)_s} 
   {(q,aq^{2-r}/cd,cq^{-r}/d,e,f,aq/d;q)_s}
\Big(\frac {q^{r}ef} {a}\Big)^s.
\end{multline}
This is a transformation formula between a multiple basic hypergeometric
series associated with the root system $\BC_r$ and a very-well-poised
basic hypergeometric $_8\phi_7$-series (see \cite{GR04} for terminology).
}

\subsection{The evaluation of $\mathcal{S}_{r,n}(2,1,0)$} 

\begin{proposition}[$\mathrm{D}_r$ summation]
\label{thm:S220q}
Let $q$ be a real number with $0<q<1$.
For all non-negative integers or half-integers $m$ and $n$ and a positive
integer~$r$, we have
\begin{multline} \label{eq:S220q}
\sum_{k_1,\dots,k_r=-n} ^n\;
\prod_{1\leq i<j\leq r} [k_j-k_i]_q^2\,[k_i+k_j]_q^2
\prod_{i=1}^r q^{k_i^2-(2i-\frac{3}{2})k_i} \frac {1+q^{k_i}} {1+q}
\qbin{2n}{n+k_i}_q \qbin{2m}{m+k_i}_q\\
=r!\Big(\frac {2} {[2]_q}\Big)^r\,
q^{-2\binom {r+1}3+\frac {1} {2}\binom r2}
\prod_{i=1}^r 
\frac{\Gamma_{q^{1/2}}(2i-1)\,\Gamma_q(2n+1)\,\Gamma_q(2m+1)} 
{\Gamma_q(m+n-i+2)\,\Gamma_q(m+n-i-r+3)}\\
\cdot
\frac {\Gamma_{q^{1/2}}(2m+2n-2i-2r+5)} 
{\Gamma_{q^{1/2}}(2n-2i+3)\,\Gamma_{q^{1/2}}(2m-2i+3)}.
\end{multline}
\end{proposition}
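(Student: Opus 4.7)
The plan is to evaluate eq:S220q by matching it to a specialisation of Rains's transformation formula eq:Rains, specifically the terminating case eq:Rains3 arising from $b=aq/c$ (which collapses the sum on the right-hand side of eq:Rains to a single term), with subsequent $m\to\infty$; the latter is harmless here, since the two $q$-binomials $\qbin{2n}{n+k_i}_q\,\qbin{2m}{m+k_i}_q$ already provide the necessary finite-range truncations, realised through two of the Rains parameters $d,e,f$.

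First I would rewrite the unrestricted sum in eq:S220q as an ordered sum over $0\le k_1<\dots<k_r$ compatible with the index range of eq:Rains3. The factor $\prod_{i<j}[k_j-k_i]_q^2\,[k_i+k_j]_q^2$ forces the non-vanishing configurations to have pairwise distinct absolute values, with at most one $k_i=0$ (since $[k_i+k_j]_q=0$ when $k_i+k_j=0$); the signed-permutation symmetry of the $q^{k_i^2}$-weight and of the binomials, combined with the convention that $(1+q^{k_i})/(1+q)=2/[2]_q$ at the boundary $k_i=0$, yields the prefactor $r!\,(2/[2]_q)^r$ on the right of eq:S220q. The residual $i$-dependent weight $\prod_i q^{-(2i-3/2)k_i}$ is reshaped via the elementary identity
\[
(1-q^{k_i-k_j})^2=q^{2(k_i-k_j)}(1-q^{k_j-k_i})^2,
\]
which converts $\prod_{i<j}(1-q^{k_i-k_j})^2$ into the Rains-style Vandermonde-square $\prod_{i<j}(1-q^{k_j-k_i})^2$ at the cost of absorbing a $q^{2(r-2i+1)k_i}$ factor on each $k_i$; the resulting total weight on $k_i$ is then exactly of the form $q^{(2i-1)k_i}\cdot(a/q^{2r-2}def)^{k_i}$ required by the LHS of eq:Rains3.

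The central step is to fix the parameters $a,d,e,f$ of eq:Rains3 so that its summand reproduces the $k_i$-dependence of ours. Writing the two $q$-binomials in hypergeometric form via $(q^{-N};q)_k=(-1)^k q^{\binom k2-Nk}(q;q)_N/(q;q)_{N-k}$ shows that two of $d,e,f$ must be set to $q^{-2n}$ and $q^{-2m}$. The remaining parameter and~$a$ are then pinned down by two simultaneous requirements: (i) that the combination $(1-aq^{2k_i})/(1-a)\cdot(a;q)_{k_i}/(aq/f;q)_{k_i}$, together with the Gaussian weight $q^{k_i^2}$, reproduce the factor $(1+q^{k_i})/(1+q)$ of eq:S220q; and (ii) that the prefactor $(a/q^{2r-2}def)^{k_i}$ match the residual $q$-power from the previous step. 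The natural solution forces $a$ to be essentially $q^{1/2}$-valued, which is the origin of the $\Gamma_{q^{1/2}}$ factors on the right-hand side of eq:S220q.

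With the parameters fixed, eq:Rains3 evaluates the ordered sum in closed form as a product of $q$-Pochhammer symbols. Translating this product into the $\Gamma_q/\Gamma_{q^{1/2}}$ format of eq:S220q is mechanical, using
\[
\Gamma_q(x)=(1-q)^{1-x}\,\frac{(q;q)_\infty}{(q^x;q)_\infty}\qquad\text{and}\qquad (a;q)_n\,(aq^{1/2};q)_n=(a;q^{1/2})_{2n}
\]
to bridge bases, together with a finite bookkeeping of $q$-powers to recover the prefactor $q^{-2\binom{r+1}{3}+\binom{r}{2}/2}$. The principal obstacle is the parameter-identification step: the choice of $a,d,e,f$ must be such that \emph{both} the $k_i$-dependent summand \emph{and} the $k_i$-independent prefactor of eq:Rains3 specialise correctly to eq:S220q. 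A secondary technical point is the half-integer case of $n$ or $m$, which is handled by an initial shift $k_i\mapsto k_i+\tfrac{1}{2}$ reducing everything to the integer setting, at the cost of a change of base from $q$ to $q^{1/2}$ in some of the resulting gamma functions.
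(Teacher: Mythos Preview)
Your overall strategy---reduce to an ordered sum and invoke the summation formula \eqref{eq:Rains3}---is indeed the paper's, but two of your key steps do not go through as written.

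First, the summand of \eqref{eq:S220q} is \emph{not} invariant under the hyperoctahedral group. The pieces $q^{k_i^2}$ and the two $q$-binomials are even in $k_i$, but neither $q^{-(2i-3/2)k_i}$ nor $(1+q^{k_i})$ is: under $k_i\mapsto -k_i$ the summand changes by $q^{(4i-4)k_i}$, which is nontrivial for $i>1$. Hence you cannot restrict to $k_i\geq 0$ and pull out the factor $(2/[2]_q)^r$ that way. The paper instead proves only $\Symm_r$-invariance (this already requires a small computation, since the $i$-dependent weight $q^{-(2i-3/2)k_i}$ must be shown to compensate exactly the $q$-power picked up by $\prod_{i<j}[k_j-k_i]_q^2$ under a permutation), restricts to $-n\leq k_1<\dots<k_r\leq n$, and then \emph{shifts} $k_i\mapsto k_i-n$ to reach the index range $0\leq k_1<\dots<k_r$ demanded by \eqref{eq:Rains3}.

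Second, and as a consequence of the wrong reduction, your parameter identification is off. After the shift by $-n$ the sum matches \eqref{eq:Rains3} with
\[
a=q^{-2n},\qquad d=q^{-m-n},\qquad e=q^{-n},\qquad f=q^{-n+1/2},
\]
not with two of $d,e,f$ equal to $q^{-2n},q^{-2m}$ and $a$ of ``$q^{1/2}$-type''. The factor $(1+q^{k_i})/(1+q)$ (in shifted form) comes from the very-well-poised piece $(1-aq^{2k_i})/(1-a)$ combined with $(e;q)_{k_i}/(aq/e;q)_{k_i}$ once $e=\sqrt a$. This choice $e=\sqrt a$ makes the right-hand side of \eqref{eq:Rains3} formally indeterminate, since $(aq;q)_\infty/(aq/e;q)_\infty$ becomes $0/0$; the paper has to evaluate the limit $a\to q^{-2n}$ with $e=\sqrt a$ carefully (see \eqref{eq:lim}), and \emph{this} is where the factor $2^r$ actually arises. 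Your plan neither lands on the correct specialisation nor anticipates this limit. Finally, no separate shift by $1/2$ is needed in the half-integer case: since $n+k_i\in\Z$ always, the substitution $k_i\mapsto k_i-n$ handles integer and half-integer $m,n$ uniformly.
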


Taking the $q\to 1$ limit, dividing both sides of the result by
${\binom {2m}m}^r$, and finally taking the limit
$m\to\infty$, we arrive at (cf.\ \eqref{eq:2anyanygp4})
\begin{align} \label{eq:S220}
\mathcal{S}_{r,n}(2,1,0)&=\sum_{k_1,\dots,k_r=-n} ^n\;
\prod_{1\leq i<j\leq r} (k_i^2-k_j^2)^2
\prod_{i=1}^r \binom {2n}{n+k_i}\\ \notag
&=2^{2r(n-r+1)} \Gamma(r+1)
\prod_{i=1}^{r-1} \frac{\Gamma(2i+1)\,\Gamma(2n+1)}{\Gamma(2n-2i+1)}.
\end{align}
The evaluation of $W_2(n)$ provided after the proof of Theorem~3.2 in
\cite{BOOPAA} is the special case $r=2$ of this identity.

\begin{proof}
To begin with, we observe that the summand of the sum on the left-hand
side of \eqref{eq:S220q} is invariant under permutations of the
summation indices. Indeed, writing $S_1(k_1,k_2,\dots,k_r)$ for this
summand, for a permutation $\sigma$ of $\{1,2,\dots,r\}$ we have
\begin{equation} \label{eq:Sk} 
S_1(k_{\si(1)},k_{\si(2)},\dots,k_{\si(r)})
=q^{E_1(\sigma;k_1,k_2,\dots,k_r)} S_1(k_1,k_2,\dots,k_r),
\end{equation}
where
\begin{equation} \label{eq:Ek} 
E_1(\sigma;k_1,k_2,\dots,k_r)=
2\sum_{1\leq i<j\leq r}
\chi\big(\si(i)>\si(j)\big)\big(k_{\si(j)}-k_{\si(i)}\big)
-2\sum_{i=1} ^r (ik_{\si(i)}-ik_i).
\end{equation}
Here, as before, 
$\chi(\mathcal A)=1$ if $\mathcal A$ is true and $\chi(\mathcal{A})=0$ 
otherwise. Let $I_\si(i)$ denote the number of indices $j$ with
$1\leq i<j\leq r$ and $\si(i)>\si(j)$. Then, by elementary counting, we have
\begin{align*}
\sum_{1\leq i<j\leq r}
\chi\big(\si(i)>\si(j)\big)(k_{\si(j)}-k_{\si(i)})
&=\sum_{j=1}^r \big(j-\si(j)+I_\si(j)\big)k_{\si(j)}
-\sum_{i=1}^r I_\si(i)k_{\si(i)}\\
&=\sum_{i=1} ^r (i-\si(i))k_{\si(i)}.
\end{align*}
If this is substituted back in \eqref{eq:Ek}, then one obtains
$E_1(\sigma;k_1,k_2,\dots,k_r)=0$.
In combination with \eqref{eq:Sk}, this implies the claimed
invariance of summands under permutations of the summation indices.
As a consequence, we may restrict the range of summation on the
left-hand side of \eqref{eq:S220q} to $k_1<k_2<\dots<k_r$, and
in turn multiply this restricted sum by $r!$, thereby not changing
the value of the left-hand side of \eqref{eq:S220q}.

Now, in this (restricted) sum, we replace $k_i$ by $k_i-n$, and we
rewrite the arising multiple sum in terms of $q$-shifted factorials.
The result is
\begin{multline*}
\frac {r!\,q^{rn^2+(r^2-\frac {r}{2})n} 
(1+q^{-n})^r} {(1-q)^{2r^2-2r}\,(1+q)^r}
\qbin{2m}{m-n}_q^r\:
\sum_{0\leq k_1<\dots<k_r}\;
\prod_{1\leq i<j\leq r}
\big(1-q^{k_i-k_j}\big)^2\, \big(1-q^{k_i+k_j-2n}\big)^2\\
\cdot \prod_{i=1}^r \frac {(q^{-2n},-q^{1-n},q^{-m-n};q)_{k_i}} 
{(q,-q^{-n},q^{m-n+1};q)_{k_i}}\, q^{(2i-2r+m+n+\frac{1}{2})k_i},
\end{multline*}
where the summation indices $k_i$ now run over integers.
Thus, we see that we may apply \eqref{eq:Rains3}
with $a=q^{-2n}$, $d=q^{-m-n}$, $e=q^{-n}$, $f=q^{-n+1/2}$ 
to evaluate this sum.
We have to be a little careful though because of the appearance
of the ratio $(aq;q)_\infty/(aq/e;q)_\infty$ on the right-hand
side of \eqref{eq:Rains3}, which becomes the indeterminate expression
$0/0$ for the above choices of $a$ and $e$. To be precise, in
\eqref{eq:Rains3} we have to first choose $e=\sqrt a$, and
subsequently calculate the limit as $a$ tends to $q^{-2n}$.
Doing this, we obtain
\begin{align} \label{eq:lim}
\lim_{a\to q^{-2n}}\frac {(aq;q)_\infty} {(\sqrt aq;q)_\infty}
&=
\lim_{a\to q^{-2n}}\frac {(aq;q)_{2n-1}\,(1-aq^{2n})\,(aq^{2n+1};q)_\infty} 
{(\sqrt aq;q)_{n-1}\,(1-\sqrt aq^n)\,(\sqrt aq^{n+1};q)_\infty}\\
&=2\,\frac {(q^{1-2n};q)_{2n-1}} {(q^{1-n};q)_{n-1}}
=2\,(q^{1-2n};q)_n.
\notag
\end{align}
After considerable simplification and rewriting of the right-hand side
of \eqref{eq:Rains3} under the above specialisation, we obtain the
right-hand side of \eqref{eq:S220q}.
\end{proof}

\pagebreak
\subsection{The evaluation of $\mathcal{S}_{r,n}(2,1,1)$} 

\begin{proposition} \label{thm:S221q}
Let $q$ be a real number with $0<q<1$.
For all non-negative integers $m$ and $n$ and a positive
integer~$r$, we have
\begin{multline} \label{eq:S221q}
\sum_{k_1,\dots,k_r=-n}^n\;
\prod_{1\leq i<j\leq r}
[k_j-k_i]_q^2\,[k_i+k_j]_q^2
\prod_{i=1}^r q^{k_i^2-(2i-1)k_i} \Abs{[k_i]_{q^2}} 
\qbin{2n}{n+k_i}_q \qbin{2m}{m+k_i}_q\\
=r!\Big(\frac {2} {[2]_q}\Big)^r\,q^{-2\binom {r+1}3}
\prod_{i=1}^r \bigg( \frac{\Gamma_q^2(i)\, \Gamma_q(2n+1)}
{\Gamma_q(n-i+2)\,\Gamma_q(n-i+1)}\\ \times
\frac{\Gamma_q(2m+1)}{\Gamma_q(m-i+2)\,\Gamma_q(m-i+1)}
\cdot
\frac{\Gamma_q(m+n-i-r+2)}{\Gamma_q(m+n-i+2)}\bigg).
\end{multline}
\end{proposition}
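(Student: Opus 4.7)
The proof will follow the framework of Proposition~5.1, using the deeper transformation \eqref{eq:Rains4} in place of \eqref{eq:Rains3} to accommodate the extra factor $\abs{[k_i]_{q^2}}$. First I verify permutation symmetry of the summand: the single-index factors $\abs{[k_i]_{q^2}}$ and the two $q$-binomials are simply permuted among themselves, while the remaining factors---the Vandermonde product and the Gaussian weight $q^{k_i^2-(2i-1)k_i}$---behave under $\sigma$ exactly as in the proof of Proposition~5.1, so that $E(\sigma)\equiv 0$ by the same cancellation argument (the shift of the linear coefficient from $2i-\tfrac{3}{2}$ to $2i-1$ leaves the argument untouched). This reduces matters to an ordered sum over $k_1<\cdots<k_r$ with a compensating factor of $r!$. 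A shift $k_i \mapsto k_i-n$ places the indices in $\{0,1,\dots,2n\}\setminus\{n\}$, the value $k_i=n$ being forced out by $\abs{[0]_{q^2}}=0$.

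Next I rewrite the summand in $q$-Pochhammer form and identify it with the left-hand side of \eqref{eq:Rains4}. The two $q$-binomials fix $a=q^{-2n}$ and $d=q^{-m-n}$, while $\abs{[k_i-n]_{q^2}}$ together with the Gaussian weight is to be produced by a suitable choice of $c$, $e$, $f$. The natural starting point is $e=\sqrt{a}=q^{-n}$ and $f=\sqrt{a}\,q^{1/2}=q^{-n+1/2}$---as in Proposition~5.1, so that $f=aq/f$ makes the $f$-ratio trivial---together with a choice of $c$ for which $(1-cq^{k_i-1})(1-aq^{k_i+1}/c)$ combines with the $(1-aq^{2k_i})/(1-a)=(1-q^{2(k_i-n)})/(1-q^{-2n})$ already present in \eqref{eq:Rains4} to reproduce $\abs{1-q^{2(k_i-n)}}$. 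Since \eqref{eq:Rains4} produces unsigned products while $\abs{[k_i-n]_{q^2}}$ carries a $\sgn(k_i-n)$ kink, the central part of this step is to show that the accumulated sign $\prod_i\sgn(k_i-n)$ can be absorbed; this motivates the choice $c=q\sqrt{a}=q^{1-n}$, which turns $(1-cq^{k_i-1})(1-aq^{k_i+1}/c)$ into the square $(1-q^{k_i-n})^2=|1-q^{k_i-n}|^2$ and provides exactly the ``missing'' even number of signs needed to convert $(1-q^{2(k_i-n)})$ into $\abs{1-q^{2(k_i-n)}}$ per index.

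After this identification, applying \eqref{eq:Rains4} and resolving the $0/0$ indeterminacies created by $e=\sqrt{a}$ (via a limit analogous to~\eqref{eq:lim}) leaves a product times a terminating very-well-poised $_8\phi_7$ in an auxiliary index $s\in\{0,1,\dots,r\}$. For the stated product formula to emerge, this $_8\phi_7$ must evaluate in closed form, either through Jackson's $_8\phi_7$ summation (if the balancing condition $ef=aq^{1-r}$ can be arranged, possibly after a mild perturbation of $f$) or via the collapse to a single non-vanishing term triggered by a $(q^{\text{integer}};q)_s=0$ coincidence among the parameters. Evaluating this $_8\phi_7$ is the step I expect to be the main obstacle, since it simultaneously enforces the correct parameter balance and is the place where the sign subtlety of the absolute value must cleanly disappear. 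Once the $_8\phi_7$ is in closed product form, the remaining consolidation of the $q$-Pochhammer symbols into $\Gamma_q$ functions and the comparison with the right-hand side of \eqref{eq:S221q} are routine bookkeeping, entirely parallel to the end-game of the proof of Proposition~5.1.
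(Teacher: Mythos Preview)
Your proposal has a genuine gap at the ``sign absorption'' step, and it stems from overlooking the key symmetry that the paper exploits.

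The summand of \eqref{eq:S221q} is invariant not only under permutations of the $k_i$ but also under $k_i\mapsto -k_i$ for each fixed~$i$; the paper checks this explicitly. Once one restricts to $1\le k_1<k_2<\dots<k_r$ (gaining a factor $2^r r!$), the absolute value disappears since $[k_i]_{q^2}>0$ for $k_i\ge 1$. After the shift $k_i\mapsto k_i+1$ the sum is a direct instance of \eqref{eq:Rains3} with the very different specialisation $a=q^{2}$, $d=q^{1-n}$, $e=q^{1-m}$, $f=q$; no $_8\phi_7$ ever appears. You used only the permutation symmetry and then tried to force the absolute value through \eqref{eq:Rains4}, which is where the argument breaks.

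Concretely, after your shift the factor you need is $\abs{1-q^{2(\ell_i-n)}}=\abs{1-q^{\ell_i-n}}\,(1+q^{\ell_i-n})$, and this is \emph{not} a rational function of $q^{\ell_i}$: it is genuinely piecewise, so it cannot be matched by any choice of $c$ in \eqref{eq:Rains4}. With your choice $c=q^{1-n}$ the extra $c$-factor in \eqref{eq:Rains4} is $(1-q^{\ell_i-n})^2$, and combined with the very-well-poised $(1-aq^{2\ell_i})$ and the $e=\sqrt a$ cancellation you inherit from Proposition~\ref{thm:S220q}, the $k_i$-dependent part you produce is $(1-q^{\ell_i-n})^2(1+q^{\ell_i-n})$, not $\abs{1-q^{\ell_i-n}}\,(1+q^{\ell_i-n})$. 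The discrepancy is an extra $\abs{1-q^{\ell_i-n}}$ per index together with a residual sign $\prod_i\sgn(\ell_i-n)$, neither of which is a constant. The squared factor being non-negative does not ``provide the missing signs''; it introduces an unwanted $k_i$-dependent factor. Consequently the left-hand side of \eqref{eq:Rains4} is not the sum you want, and the downstream $_8\phi_7$ you anticipate is moot. The fix is to use the $k_i\mapsto -k_i$ invariance first; then \eqref{eq:Rains3} suffices and the proof is short.
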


Taking the $q\to 1$ limit, dividing both sides of the result by
${\binom{2m}{m}}^r$, and finally performing the limit
$m\to\infty$, we obtain (cf.~\eqref{eq:2anyanygp4})
\begin{align} \label{eq:S221}
\mathcal{S}_{r,n}(2,1,1)&=
\sum_{k_1,\dots,k_r=-n}^n\;
\prod_{1\leq i<j\leq r} (k_i^2-k_j^2)^2
\prod_{i=1}^r \abs{k_i} \binom{2n}{n+k_i}\\ \notag
&=\prod_{i=1}^r 
\frac{\Gamma(i)\,\Gamma(i+1)\,\Gamma(2n+1)}{\Gamma(n-i+2)\,\Gamma(n-i+1)}.
\end{align}

\begin{proof}
Here, the summand of the multiple sum on the left-hand side
of \eqref{eq:S221q} is invariant under both permutations
of the summation indices and under replacement of $k_i$
by $-k_i$, for some fixed~$i$. To show this, if $S_2(k_1,k_2,\dots,k_r)$
denotes the summand, then we have
\[
S_2(k_1,\dots,k_{i-1},-k_i,k_{i+1},\dots,k_r)
=q^{E_2(k_1,k_2,\dots,k_r)} S_2(k_1,k_2,\dots,k_r),
\]
where
\begin{align*}
E_2(k_1,k_2,\dots,k_r)&=2\sum_{1\leq i<j\leq r}(-k_j-k_i)
+2\sum_{1\leq i<j\leq r}(-k_j+k_i)+\sum_{i=1}^r \big(2(2i-1)k_i-2k_i\big)\\
&=4\sum_{j=1}^r\big({-}(j-1)k_j\big)+\sum_{i=1}^r (4i-4)k_i=0.
\end{align*}
This proves the invariance of $S_2(k_1,k_2,\dots,k_r)$ under
the replacement $k_i\to-k_i$.
As a consequence, we may restrict the range of summation on the
left-hand side of \eqref{eq:S221q} to $1\leq k_1<k_2<\dots<k_r$, and
in turn multiply this restricted sum by $2^rr!$, thereby not changing
the value of the left-hand side of \eqref{eq:S221q}.

In this (restricted) sum,
we replace $k_i$ by $k_i+1$, and we
rewrite the arising multiple sum in terms of $q$-shifted factorials.
The result is
\begin{multline*}
\frac {2^rr!} {q^{r^2-r}(1-q)^{2r^2-2r}}
\qbin{2n}{n+1}_q^r
\qbin{2m}{m+1}_q^r
\sum_{0\leq k_1<\dots<k_r}\;
\prod_{1\leq i<j\leq r}
\big(1-q^{k_i-k_j}\big)^2\,
\big(1-q^{k_i+k_j+2}\big)^2\\
\cdot
\prod_{i=1}^r 
\frac {(q^2,-q^2,q^{1-n},q^{1-m};q)_{k_i}} 
{(q,-q,q^{n+2},q^{m+2};q)_{k_i}}\,
q^{(2i-2r+m+n)k_i}.
\end{multline*}
Thus, we see that we may apply \eqref{eq:Rains3}
with $a=q^{2}$, $d=q^{1-n}$, $e=q^{1-m}$, $f=q$ 
to evaluate this sum.
After considerable simplification and rewriting, we obtain the
right-hand side of \eqref{eq:S221q}.
\end{proof}

\begin{proposition} \label{thm:S221uq}
Let $q$ be a real number with $0<q<1$.
For all positive half-integers $m$ and $n$ and a positive
integer~$r$, we have
\begin{multline} \label{eq:S221uq}
\sum_{k_1,\dots,k_r=-n}^n\;
\prod_{1\leq i<j\leq r}
[k_j-k_i]_q^2\,[k_i+k_j]_q^2
\prod_{i=1}^r q^{k_i^2-(2i-1)k_i} \Abs{[k_i]_{q^2}} 
\qbin{2n}{n+k_i}_q
\qbin{2m}{m+k_i}_q\\
=r!\Big(\frac {2} {[2]_q}\Big)^r\,q^{-\frac{1}{4}\binom {2r+1}{3}}
\prod_{i=1}^r \bigg(\frac{\Gamma_q^2(i)\,
\Gamma_q(2n+1)}{\Gamma_q^2(n-i+\frac{3}{2})}\\ \times
\frac{\Gamma_q(2m+1)}{\Gamma_q^2(m-i+\frac{3}{2})}\cdot
\frac{\Gamma_q(m+n-i-r+2)} {\Gamma_q(m+n-i+2)}\bigg).
\end{multline}
\end{proposition}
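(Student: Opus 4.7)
The plan is to adapt the proof of Proposition~\ref{thm:S221q} to the half-integer setting. The summand on the left-hand side of \eqref{eq:S221uq} continues to be invariant under permutations of the $k_i$ and under each sign-change $k_i\mapsto -k_i$; the formal arguments establishing this (in terms of the analogue of $E_2$) are insensitive to whether the $k_i$ are integers or half-integers. Since the summand vanishes at any $k_i=0$ on account of the factor $\Abs{[k_i]_{q^2}}$, I may therefore reduce the sum to $2^r r!$ times the restricted sum over $\tfrac{1}{2}\le k_1<k_2<\cdots<k_r$ (half-integers).

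In this restricted sum I next substitute $k_i\mapsto k_i+\tfrac{1}{2}$, so that the new summation indices range over non-negative integers. The quadratic $q$-factor transforms as $q^{(k_i+1/2)^2-(2i-1)(k_i+1/2)}=q^{k_i^2+(2-2i)k_i+(3/4-i)}$, and applying the identity $(q^{N-k+1};q)_k=(-1)^k q^{k(2N-k+1)/2}(q^{-N};q)_k$ with $N=n-\tfrac{1}{2}$ gives
\[
\qbin{2n}{n+k_i+1/2}_q
=\qbin{2n}{n+1/2}_q\cdot
\frac{(-1)^{k_i}\,q^{k_i n-k_i^2/2}\,(q^{1/2-n};q)_{k_i}}{(q^{n+3/2};q)_{k_i}},
\]
and similarly for $m$, with the two signs $(-1)^{k_i}$ combining into $(-1)^{2k_i}=1$. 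The factor $[k_i+\tfrac{1}{2}]_{q^2}=(1-q^{2k_i+1})/(1-q^2)$ splits as $\tfrac{1}{1+q}\cdot(1-aq^{2k_i})/(1-a)$ with $a=q$, and the surplus $1/(1+q)$ per summation variable is collected into the prefactor to reconstruct the $(2/[2]_q)^r$ visible on the right-hand side of \eqref{eq:S221uq}. A direct bookkeeping, in which the $k_i^2$ contributions from the quadratic shift cancel exactly against those produced by the two binomial ratios, then identifies the resulting multi-sum (up to an overall $\qbin{2n}{n+1/2}_q^r\qbin{2m}{m+1/2}_q^r$ and some explicit powers of $q$ and $(1-q)$) with the left-hand side of \eqref{eq:Rains3} for
\[
(a,d,e,f)=\bigl(q,\ q^{1/2-n},\ q^{1/2-m},\ q\bigr);
\]
note that the choice $f=q$ causes the $(f;q)_{k_i}$ and $(aq/f;q)_{k_i}$ factors to cancel, leaving precisely the binomial ratios produced by the shift.

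Applying \eqref{eq:Rains3} evaluates the multi-sum in closed form, after which it remains to massage the resulting product of $q$-shifted factorials into the $q$-gamma form on the right-hand side of \eqref{eq:S221uq}. I expect this simplification to be the main obstacle: the $(\cdot;q)_\infty$ factors on the right-hand side of \eqref{eq:Rains3} must be collapsed using the fact that the left-hand side sum terminates at $k_i=n-\tfrac{1}{2}$ (which itself follows because $d=q^{-N}$ with $N=n-\tfrac{1}{2}$), and the half-integer values of $n$ and $m$ introduce a blend of $q^{1/2}$-shifted and $q$-shifted factorials that must be untangled using the elementary identity $(a^2;q^2)_k=(a;q)_k(-a;q)_k$. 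As in the proof of Proposition~\ref{thm:S221q}, care must also be taken with the convention choice between $(1-q^{k_i-k_j})^2$ (used in \eqref{eq:Rains3}) and $(1-q^{k_j-k_i})^2$ (produced naturally from $[k_j-k_i]_q^2$), which amounts to a compensating factor $q^{2\sum_i(r+1-2i)k_i}$ in the summand.
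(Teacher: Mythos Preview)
Your approach is essentially identical to the paper's: reduce by hyperoctahedral symmetry, shift $k_i\mapsto k_i+\tfrac12$, and apply \eqref{eq:Rains3} with exactly the specialisation $(a,d,e,f)=(q,\,q^{1/2-n},\,q^{1/2-m},\,q)$. Your additional bookkeeping details are routine and correct; the only superfluous remark is that the summand vanishes at $k_i=0$, which is moot here since $n$ is a half-integer and no $k_i$ ever equals~$0$.
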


\begin{proof}
This can be proved in the same way as Proposition~\ref{thm:S221q}.
The only differences are that, here, the summation index $k_i$ is
replaced by $k_i+\frac {1} {2}$, $i=1,2,\dots,r$, 
and that the relevant specialisation
of \eqref{eq:Rains3} is $a=q$, $d=q^{1/2-n}$, $e=q^{1/2-m}$, $f=q$. 
\end{proof}

From now on, all proofs are similar to one of the proofs of
Propositions~\ref{thm:S220q}--\ref{thm:S221uq}, except for the proof
of Proposition~\ref{thm:S223uq}. For the remaining
theorems in this section (except for Proposition~\ref{thm:S223uq}), 
we therefore content
ourselves with specifying which choice of parameters in
\eqref{eq:Rains3} has to be used, without providing further details.

\subsection{The evaluation of $\mathcal{S}_{r,n}(2,1,2)$} 

\begin{proposition}[$\mathrm{B}_r$ summation] 
\label{thm:S222q}
Let $q$ be a real number with $0<q<1$.
For all non-negative integers or half-integers $m$ and $n$ and a positive
integer~$r$, we have
\begin{multline} \label{eq:S222q}
\sum_{k_1,\dots,k_r=-n}^n\; \prod_{1\leq i<j\leq r}
[k_j-k_i]_q^2\,[k_i+k_j]_q^2
\prod_{i=1}^r q^{k_i^2-(2i-\frac {1} {2})k_i} 
\Abs{[k_i]_{q^2}\,[k_i]_q}
\qbin{2n}{n+k_i}_q \qbin{2m}{m+k_i}_q\\
=r!\Big(\frac {2} {[2]_q}\Big)^r\,[2]_{q^{1/2}}^{-r}\,
q^{-2\binom {r+1}3-\frac {1} {2}\binom {r+1}2}
\prod_{i=1}^r \bigg(\frac{\Gamma_{q^{1/2}}(2i)\,
\Gamma_q(2n+1)\,\Gamma_q(2m+1)} 
{\Gamma_q(m+n-i+2)\,\Gamma_{q}(m+n-i-r+2)}\\
\times
\frac{\Gamma_{q^{1/2}}(2m+2n-2i-2r+3)} 
{\Gamma_{q^{1/2}}(2n-2i+2)\,\Gamma_{q^{1/2}}(2m-2i+2)}\bigg).
\end{multline}
\end{proposition}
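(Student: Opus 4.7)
My plan is to follow the template of the proofs of Propositions~\ref{thm:S220q} and~\ref{thm:S221q}. First, I would verify that the summand on the left-hand side of \eqref{eq:S222q} is invariant under permutations of the summation indices $k_i$ (the argument for Proposition~\ref{thm:S220q} carries over verbatim, since the extra factor $\Abs{[k_i]_q}$ is itself a symmetric function of the $k_i$) and under the sign change $k_i\mapsto -k_i$ for any fixed~$i$. The latter is verified by an exponent-cancellation analogous to the $E_2$-computation in the proof of Proposition~\ref{thm:S221q}: the only modification is that the linear coefficient of the quadratic exponent is now $-(2i-\tfrac{1}{2})$ in place of $-(2i-1)$, contributing an additional $q^{k_i}$ under the sign change, which is precisely compensated by the $q^{-k_i}$ that arises from the extra $\Abs{[k_i]_q}$-factor. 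Since $\Abs{[k_i]_{q^2}\,[k_i]_q}$ vanishes at $k_i=0$, these two symmetries let me restrict the sum to $1\leq k_1<\cdots<k_r$ in the integer case (respectively $\tfrac{1}{2}\leq k_1<\cdots<k_r$ in the half-integer case), at the cost of a factor $2^r r!$.

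Next, I would shift $k_i\mapsto k_i+1$ (integer case) or $k_i\mapsto k_i+\tfrac{1}{2}$ (half-integer case) so that the new summation indices satisfy $0\leq k_1<\cdots<k_r$, and rewrite the $q$-binomials and $q$-integer factors as $q$-shifted factorials, just as in the proof of Proposition~\ref{thm:S221q}. The resulting expression is exactly the left-hand side of \eqref{eq:Rains3} specialised to
\[
a=q^2,\quad d=q^{1-n},\quad e=q^{1-m},\quad f=q^{3/2}
\]
in the integer case and to
\[
a=q,\quad d=q^{\tfrac{1}{2}-n},\quad e=q^{\tfrac{1}{2}-m},\quad f=q^{3/2}
\]
in the half-integer case. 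In the integer case, $f^2=aq$, so the ratio $(f;q)_{k_i}/(aq/f;q)_{k_i}$ in the summand of \eqref{eq:Rains3} is identically~$1$, while the remaining factor $(1-aq^{2k_i})(a;q)_{k_i}/\bigl((1-a)(q;q)_{k_i}\bigr)$ produces exactly $[k_i+1]_{q^2}\,[k_i+1]_q$. In the half-integer case the same factors combine into $[k_i+\tfrac{1}{2}]_{q^2}\,[k_i+\tfrac{1}{2}]_q$ up to the $k_i$-independent constant $[2]_{q^{1/2}}[2]_q$, which together with the $2^r$ from the sign reduction explains the prefactor $(2/[2]_q)^r[2]_{q^{1/2}}^{-r}$ in \eqref{eq:S222q}.

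Applying \eqref{eq:Rains3} then evaluates the multiple sum as a product. The final task is to rearrange this product into the $\Gamma_q$/$\Gamma_{q^{1/2}}$ form stated in \eqref{eq:S222q}, by means of the base-splitting identities $(q^a;q^{1/2})_{2n}=(q^a;q)_n\,(q^{a+1/2};q)_n$ and $(-q^a;q^{1/2})_{2n}=(-q^a;q)_n\,(-q^{a+1/2};q)_n$, and the definition of the $q$-gamma function. As with the previous propositions, the main obstacle is the bookkeeping required to correctly track the various $r$-dependent constant prefactors picked up through the symmetry reduction, the shift, the specialisation of \eqref{eq:Rains3}, and the $q$-versus-$q^{1/2}$ base splitting; no new conceptual ingredient is needed beyond those already used to prove Propositions~\ref{thm:S220q}--\ref{thm:S221uq}.
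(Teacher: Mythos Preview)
Your approach is correct, but it differs from the paper's in the choice of specialisation of \eqref{eq:Rains3}. The paper follows the template of Proposition~\ref{thm:S220q}: it uses only the permutation symmetry, shifts $k_i\mapsto k_i-n$, and specialises \eqref{eq:Rains3} with
\[
a=q^{-2n},\quad d=q^{-m-n},\quad e=q^{-n+1},\quad f=q^{-n+1/2}.
\]
Here $e$ differs from the $e=q^{-n}$ used for Proposition~\ref{thm:S220q} by a single shift, and since $[k_i]_{q^2}\,[k_i]_q$ is non-negative for all integer or half-integer $k_i$ (both factors change sign simultaneously), the absolute value in the summand is automatic. This route treats the integer and half-integer cases uniformly in a single specialisation, at the cost of having to handle the indeterminate ratio $(aq;q)_\infty/(aq/e;q)_\infty$ via a limit, exactly as in \eqref{eq:lim}.

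You instead follow the template of Propositions~\ref{thm:S221q} and~\ref{thm:S223q}: exploit the full hyperoctahedral symmetry to restrict to strictly positive $k_i$, shift by $1$ (or $\tfrac{1}{2}$), and specialise with $a=q^2$ (or $a=q$) and $f=q^{3/2}$. Your verification of the sign-change invariance and of the mechanism producing the extra $[k_i]_q$-factor is correct in both cases. The trade-off is the mirror image of the paper's: no delicate limit is needed, but the integer and half-integer cases must be handled separately, just as Propositions~\ref{thm:S221q}/\ref{thm:S221uq} and~\ref{thm:S223q}/\ref{thm:S223uq} are stated separately. Both routes are valid and lead to \eqref{eq:S222q}.
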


Taking the
$q\to 1$ limit, dividing both sides of the result by
${\binom {2m}m}^r$, and finally performing the limit
$m\to\infty$, we obtain (cf.~\eqref{eq:2anyanygp4})
\begin{align} \label{eq:S222}
\mathcal{S}_{r,n}(2,1,2)&=\sum_{k_1,\dots,k_r=-n}^n\;
\prod_{1\leq i<j\leq r} (k_i^2-k_j^2)^2
\prod_{i=1}^r k_i^2 \binom{2n}{n+k_i}\\
&=2^{r(2n-2r-1)} 
\prod_{i=1}^r \frac{\Gamma(2i+1)\,\Gamma(2n+1)}{\Gamma(2n-2i+2)}.  \notag
\end{align}

\begin{proof}
The special case of \eqref{eq:Rains3} which is relevant here is
$a=q^{-2n}$, $d=q^{-m-n}$, $e=q^{-n+1}$, and $f=q^{-n+1/2}$.
\end{proof}

\subsection{The evaluation of $\mathcal{S}_{r,n}(2,1,3)$} 

\begin{proposition} \label{thm:S223q}
Let $q$ be a real number with $0<q<1$.
For all non-negative integers $m$ and $n$ and a positive
integer~$r$, we have
\begin{multline} \label{eq:S223q}
\sum_{k_1,\dots,k_r=-n}^n\; \prod_{1\leq i<j\leq r}
[k_j-k_i]_q^2\,[k_i+k_j]_q^2
\prod_{i=1}^r q^{k_i^2-2ik_i} \Abs{[k_i]_{q^2}\,[k_i]_{q}^2} 
\qbin{2n}{n+k_i}_q \qbin{2m}{m+k_i}_q\\
=r!\Big(\frac{2}{[2]_q}\Big)^r\,q^{-2\binom {r+1}3-\binom {r+1}2}
\prod_{i=1}^r \bigg(
\frac {\Gamma_q(2n+1)}{\Gamma_q^2(n-i+1)}\cdot
\frac {\Gamma_q(2m+1)}{\Gamma_q^2(m-i+1)}\\
\times
\frac {\Gamma_q(i)\,\Gamma_q(i+1)\,\Gamma_q(m+n-i-r+1)} 
{\Gamma_q(m+n-i+2)}\bigg).
\end{multline}
\end{proposition}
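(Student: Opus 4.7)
The plan is to adapt the strategy used in the proof of Proposition~\ref{thm:S221q} to the present setting. I first verify that the summand $S_5(k_1,\dots,k_r)$ on the left-hand side of \eqref{eq:S223q} is invariant under the hyperoctahedral group acting on the $k_i$. Invariance under permutations follows from the same calculation as in the proof of Proposition~\ref{thm:S220q}. Invariance under the sign change $k_{i_0}\mapsto -k_{i_0}$ for a fixed $i_0$ reduces to verifying that the total $q$-exponent picked up by the various factors vanishes; using $[-x]_q^2=q^{-2x}[x]_q^2$, the factor $\prod_{i<j}[k_j-k_i]_q^2[k_i+k_j]_q^2$ contributes $q^{-4(i_0-1)k_{i_0}}$, the factor $q^{-2i_0 k_{i_0}}$ contributes $q^{4i_0 k_{i_0}}$, and the absolute value $\abs{[k_{i_0}]_{q^2}[k_{i_0}]_q^2}$ contributes $q^{-4k_{i_0}}$, summing to $0$. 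Since the summand vanishes whenever some $k_i=0$, I can restrict the range of summation to $1\leq k_1<k_2<\dots<k_r$ and multiply the resulting sum by $2^r r!$.

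Next, I perform the substitution $k_i\mapsto k_i+1$ so that the new summation indices satisfy $0\leq k_1<k_2<\dots<k_r$, and rewrite the resulting sum in terms of $q$-shifted factorials. Under this shift, $[k_i+1]_{q^2}$ corresponds to $(1-aq^{2k_i})/(1-a)$ with $a=q^2$, while each $[k_i+1]_q$ can be produced either by the ratio $(a;q)_{k_i}/(q;q)_{k_i}$ or by the ratio $(f;q)_{k_i}/(aq/f;q)_{k_i}$ with $f=q^2$, thereby providing the required two extra $[k_i+1]_q$-factors. The $q$-binomial coefficients $\qbin{2n}{n+k_i+1}_q$ and $\qbin{2m}{m+k_i+1}_q$ convert (up to monomial prefactors) into the ratios $(q^{1-n};q)_{k_i}/(q^{n+2};q)_{k_i}$ and $(q^{1-m};q)_{k_i}/(q^{m+2};q)_{k_i}$, which fixes $d=q^{1-n}$ and $e=q^{1-m}$.

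With these identifications, the resulting sum is precisely the left-hand side of \eqref{eq:Rains3} evaluated at
\[
a=q^2,\qquad d=q^{1-n},\qquad e=q^{1-m},\qquad f=q^2.
\]
One checks that this choice avoids the type of $0/0$ indeterminacy that forced the limiting argument \eqref{eq:lim} in the proof of Proposition~\ref{thm:S220q}: here $(aq/f;q)_\infty=(q;q)_\infty$ is nonzero, and each of the other infinite products on the right-hand side of \eqref{eq:Rains3} reduces cleanly to a finite $q$-Pochhammer symbol under the assumption $n,m\geq r$ (for $n<r$ or $m<r$ the identity is vacuous, since the left-hand side vanishes by the standard Vandermonde-type argument).

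The main obstacle is the final bookkeeping step: collecting all monomial $q$-prefactors arising from the shift $k_i\mapsto k_i+1$, from the conversion $\qbin{2n}{n+k_i+1}_q\mapsto(q^{1-n};q)_{k_i}/(q^{n+2};q)_{k_i}$ via the identity $(q^{1-n};q)_{k_i}=(-1)^{k_i}q^{k_i-nk_i+\binom{k_i}{2}}(q^{n-k_i};q)_{k_i}$, and from the base $(a/q^{2r-2}def)^{k_i}$ appearing in \eqref{eq:Rains3}. Together with the product prefactor on the right-hand side of \eqref{eq:Rains3}, these should telescope into the right-hand side of \eqref{eq:S223q} after routine — but lengthy — manipulation of $q$-Pochhammer symbols and a final conversion to $q$-gamma functions through $\Gamma_q(x)=(1-q)^{1-x}(q;q)_\infty/(q^x;q)_\infty$.
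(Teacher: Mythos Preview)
Your proposal is correct and follows essentially the same approach as the paper: the paper's entire proof consists of the single sentence that the relevant specialisation of \eqref{eq:Rains3} is $a=q^2$, $d=q^{1-n}$, $e=q^{1-m}$, $f=q^2$, which is exactly the choice you arrive at after the same hyperoctahedral-symmetry reduction and shift $k_i\mapsto k_i+1$ used in Proposition~\ref{thm:S221q}. Your additional remarks on the invariance check, the absence of a $0/0$ indeterminacy, and the final bookkeeping simply make explicit what the paper leaves implicit.
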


Taking the
$q\to 1$ limit, dividing both sides of the result by
${\binom {2m}m}^r$, and finally performing the limit
$m\to\infty$, we obtain (cf.~\eqref{eq:2anyanygp4})
\begin{align} \label{eq:S223}
\mathcal{S}_{r,n}(2,1,3)&=
\sum_{k_1,\dots,k_r=-n}^{n}\;
\prod_{1\leq i<j\leq r}
\!\!\!\! 
(k_i^2-k_j^2)^2
\prod_{i=1}^r \abs{k_i}^3 
\binom{2n}{n+k_i}\\
&=
\prod_{i=1}^r 
\frac {\Gamma^2(i+1)\,\Gamma(2n+1)} {\Gamma^2(n-i+1)}.
\notag
\end{align}

\begin{proof}
The special case of \eqref{eq:Rains3} which is relevant here is
$a=q^2$, $d=q^{1-n}$, $e=q^{1-m}$, and $f=q^2$.
\end{proof}

\begin{proposition} \label{thm:S223uq}
Let $q$ be a real number with $0<q<1$.
For all positive half-integers $m$ and $n$ and a positive
integer~$r$, we have
\begin{align}\label{eq:S223uq}
&\sum_{k_1,\dots,k_r=-n}^n\; \prod_{1\leq i<j\leq r}
[k_j-k_i]_q^2\,[k_i+k_j]_q^2
\prod_{i=1}^r q^{k_i^2-2ik_i} \Abs{[k_i]_{q^2}\,[k_i]_q^2} \,
\qbin{2n}{n+k_i}_q \qbin{2m}{m+k_i}_q\\
&\qquad = r!\Big(\frac {2} {[2]_q}\Big)^r\,
q^{-2\binom{r+1}{3}-\frac{r^2}{2}-\frac{r}{4}}
\prod_{i=1}^r \bigg(\frac{\Gamma_q(2n+1)}{\Gamma_q^2(n-i+\frac{3}{2})}\cdot
\frac{\Gamma_q(2m+1)}{\Gamma_q^2(m-i+\frac{3}{2})} \notag \\ 
&\qquad\qquad\qquad\qquad\qquad\qquad\qquad\quad\times
\frac {\Gamma_q(i)\,\Gamma_q(i+1)\,\Gamma_q(m+n-i-r+1)} 
{\Gamma_q(m+n-i+2)}\bigg) \notag \\
&\qquad\quad \times \sum_{s=0}^r \frac {(\sqrt q;q)_s^2} {(1-q)^{2s}}
\frac{[n-s-1/2]_q!\,[m-s-1/2]_q!} {[n-r-1/2_q!\,[m-r-1/2]_q!}\,
\qbin{r}{s}_q\, \qbin{m+n-r}{s}_q. \notag
\end{align}
\end{proposition}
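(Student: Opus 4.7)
The plan is to mirror the strategy of Proposition~\ref{thm:S223q}, but with the integer shift $k_i\mapsto k_i+1$ replaced by the half-integer shift $k_i\mapsto k_i+\tfrac{1}{2}$, and, crucially, to use the more general transformation formula~\eqref{eq:Rains4} in place of~\eqref{eq:Rains3}. The extra generality is essential: the additional sum over $s$ appearing on the right-hand side of~\eqref{eq:S223uq} is exactly what is produced by the right-hand side of~\eqref{eq:Rains4}.

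As a preliminary step, I would check (exactly as in the proof of Proposition~\ref{thm:S221q}) that the summand is invariant under both permutations of the $k_i$ and sign changes $k_i\mapsto -k_i$. This allows the sum to be restricted to $\tfrac{1}{2}\leq k_1<k_2<\dots<k_r$ at the cost of a prefactor $2^r r!$. I would then substitute $k_i=l_i+\tfrac{1}{2}$ with $l_i$ a non-negative integer, so that $0\leq l_1<\dots<l_r$, and rewrite all factors --- in particular the two $q$-binomial coefficients, via the reflection identity $(a;q)_k=(q^{1-k}/a;q)_k(-a)^k q^{\binom{k}{2}}$ --- in terms of $q$-shifted factorials.

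The key step is then to apply~\eqref{eq:Rains4} with the specialisation
\[
a=q,\qquad c=q^{3/2},\qquad d=q^{1/2-n},\qquad e=q^{1/2-m},\qquad f=q.
\]
With these choices, the ``extra'' factors produced by the $b=aq^2/c$ specialisation do exactly what is needed: $(1-cq^{k_i-1})(1-aq^{k_i+1}/c)=(1-q^{k_i+1/2})^2$ delivers $[k_i+\tfrac{1}{2}]_q^2$, while $(1-aq^{2k_i})=1-q^{2k_i+1}$ delivers $[k_i+\tfrac{1}{2}]_{q^2}$. The choice $f=q$ makes $(f;q)_{k_i}/(aq/f;q)_{k_i}$ trivial, the parameters $d,e$ match the $n$- and $m$-binomial contributions, and a careful bookkeeping of powers of $q$ --- which must account for the convention difference between $(1-q^{k_i-k_j})^2$ in~\eqref{eq:Rains4} and $(1-q^{k_j-k_i})^2=[k_j-k_i]_q^2(1-q)^2$ in the original sum --- shows that $a/(q^{2r-1}def)=q^{n+m-2r}$ combines with $q^{(2i-1)k_i}$ to reproduce the desired exponent $q^{(n+m+1-2i)l_i}$.

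After the specialisation, the outer product of $q$-Pochhammer symbols on the right-hand side of~\eqref{eq:Rains4} collapses, via routine $q$-gamma manipulations, into the closed-form product in~\eqref{eq:S223uq}, while the residual terminating $s$-sum becomes the second line of~\eqref{eq:S223uq}: the numerator $(q^{1/2};q)_s^2$ arises from the double occurrence $c/q=aq/c=q^{1/2}$, and the denominator factors $[n-s-\tfrac{1}{2}]_q!$ and $[m-s-\tfrac{1}{2}]_q!$, together with the $q$-binomials $\qbin{r}{s}_q$ and $\qbin{m+n-r}{s}_q$, emerge from simplifying the remaining $q$-Pochhammers. The main obstacle is precisely this last bookkeeping: a somewhat tedious but routine verification of the $q$-power balance and the matching of the twelve $q$-Pochhammer symbols in the $s$-sum of~\eqref{eq:Rains4} against the comparatively compact expression in~\eqref{eq:S223uq}.
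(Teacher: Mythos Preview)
Your outline is correct up to and including the application of~\eqref{eq:Rains4}: the symmetry reduction, the half-integer shift, and the identification of the left-hand side of~\eqref{eq:Rains4} at $a=q$, $c=q^{3/2}$ with the summand of~\eqref{eq:S223uq} are all fine. Your specialisation $(d,e,f)=(q^{1/2-n},q^{1/2-m},q)$ differs from the paper's $(d,e,f)=(q,q^{1/2-n},q^{1/2-m})$, but this is harmless since the left-hand side of~\eqref{eq:Rains4} is symmetric in $d,e,f$.

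The gap is in your last paragraph. The $s$-sum on the right-hand side of~\eqref{eq:Rains4} is a genuine very-well-poised $\qhyp{8}{7}$: with your choices the base is $aq^{-r}/d=q^{n+1/2-r}$ and the summand carries the factor $(1-q^{n+1/2-r+2s})/(1-q^{n+1/2-r})$, which has no counterpart in the target sum of~\eqref{eq:S223uq}. The target, once one unwinds the $q$-factorials and $q$-binomials, is a balanced $\qhyp{4}{3}$. Passing from a very-well-poised $\qhyp{8}{7}$ to a balanced $\qhyp{4}{3}$ is \emph{not} a matter of rearranging Pochhammer symbols; it requires Watson's transformation formula (see \cite[Appendix~(III.17)]{GR04}), and this is precisely the extra step the paper performs after specialising~\eqref{eq:Rains4}. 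Without it your argument does not close: the ``twelve $q$-Pochhammer symbols'' you propose to match simply do not match, because the very-well-poised structure on one side has no analogue on the other.
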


\begin{proof}
We start in the same way as in the proof of Proposition~\ref{thm:S221q},
observing that the summand on the left-hand side of \eqref{eq:S223uq} 
is invariant under permutations of the summation indices $k_i$ and under 
replacement of $k_i$ by $-k_i$, for some fixed~$i$. 
This allows one to concentrate on the range
\[
\frac{1}{2}\leq k_1<k_2<\dots< k_r.
\]
The final result is then obtained by multiplying the sum over
this range by $2^rr!$.

Next we replace $k_i$ by $k_i+\tfrac{1}{2}$ for $i=1,2,\dots,r$,
and rewrite the resulting sum using $q$-shifted factorials, to obtain
\begin{multline*}
\frac{1}{(1-q)^{2r^2-2r}} \qbin{2n}{n+1/2}_q^r \qbin{2m}{m+1/2}_q^r\:
\sum_{0\leq k_1<\dots<k_r}\; \prod_{1\leq i<j\leq r}
(1-q^{k_i-k_j})^2\, (1-q^{k_i+k_j+1})^2\\
\times \prod_{i=1}^r \bigg( q^{(2i-1)k_i-i+1/4+k_i(m+n-2r)} \,
\frac {(1-q^{2k_i+1})\,(1-q^{k_i+1/2})^2\,(q,q^{1/2-n},q^{1/2-m};q)_{k_i}} 
{(1-q^2)\,(1-q)^2\,(q,q^{3/2+n},q^{3/2+m};q)_{k_i}}\bigg).
\end{multline*}
We may now transform the multiple sum on the right using \eqref{eq:Rains4} 
with $a=q$, $c=q^{3/2}$, $d=q$, $e=q^{1/2-n}$ and $f=q^{1/2-m}$.
Using the standard basic hypergeometric notation
\[
\qhyp{r}{s}\bigg[\genfrac{}{}{0pt}{}{a_1,\dots,a_r}{b_1,\dots,b_s};q,z\bigg] 
=\sum_{\ell=0}^{\infty}\frac{(a_1,\dots,a_r;q)_{\ell}}
{(q,b_1,\dots,b_s;q)_{\ell}}\,
\Big((-1)^{\ell} q^{\binom{\ell}{2}}\Big)^{s-r+1}z^{\ell},
\]
where $(a_1,\dots,a_k;q)_{\ell}=(a_1;q)_{\ell}\cdots (a_k;q)_{\ell}$,
we obtain that this sum equals
\[
F(m,n,r)\,
\qhyp{8}{7}\bigg[\genfrac{}{}{0pt}{}
{q^{-r},q^{1-r/2},-q^{1-r/2},q^{1/2-r+n},q^{1/2-r+m},q^{1/2},q^{1/2},q^{-r}}
{q^{-r/2},-q^{-r/2},q^{1/2-n},q^{1/2-m},q^{1/2-r},q^{1/2-r},q};
q,q^{r-m-n} \bigg],
\]
where $F(m,n,r)$ is an explicit product, suppressed here in 
order to focus on the essential part in the expression.
To the above $\qhyp{8}{7}$-series, we may apply Watson's transformation 
formula between a very-well-poised $\qhyp{8}{7}$-series and a balanced 
$\qhyp{4}{3}$-series (see \cite[Appendix (III.17)]{GR04})
\begin{multline*}
\qhyp{8}{7}\bigg[\genfrac{}{}{0pt}{}{a,q\sqrt{a},-q\sqrt{a},b,c,d,e,f}
{\sqrt{a},-\sqrt{a},aq/b,aq/c,aq/d,aq/e,aq/f};q,\frac{a^2q^2}{bcdef}\bigg]\\
=\frac{(aq,aq/de,aq/df,aq/ef;q)_{\infty}}{(aq/d,aq/e,aq/f,aq/def;q)_{\infty}} 
\,\qhyp{4}{3}\bigg[\genfrac{}{}{0pt}{}{aq/bc,d,e,f}{aq/b,aq/c,def/a};q,q\bigg],
\end{multline*}
provided the $\qhyp{8}{7}$-series converges and the $\qhyp{4}{3}$-series
terminates. It is then a routine but tedious task to convert the resulting
expression into the right-hand side of \eqref{eq:S223uq}.
\end{proof}

\section{Discrete Macdonald--Mehta integrals for $\gamma=1$ and $\alpha=1$}
\label{sec:gamma=1-1}

The purpose of this section is to present our evaluations
of $S_{r,n}(\alpha,\gamma,\delta)$ for $\gamma=\alpha=1$. 
In principle, it would seem that such evaluations could also
follow from the transformation formula in Theorem~\ref{thm:Rains},
by considering a limit case where $a\to0$. Indeed,
the case $\delta=0$, that is, the evaluation of the sum
$S_{r,n}(1,1,0)$, is covered
by \eqref{eq:Rains}, and it also produces a $q$-analogue containing
a further parameter. Alas,
all our attempts to come up with appropriate further specialisations
that would produce the sum $S_{r,n}(1,1,\delta)$ with $\delta=1$
on the left-hand side of \eqref{eq:Rains} failed. Hence, in order to
achieve the corresponding summation, we designed an ad hoc 
approach combining the evaluation of certain Vandermonde-
and Cauchy-like determinants with summation formulas from the
theory of hypergeometric series. As opposed to the case $\delta=0$,
for $\delta=1$ we were not able to find a $q$-analogue.

It is interesting to note that the limit case
$a\to0$ of \eqref{eq:Rains} has been worked out earlier in
\cite[Equation~(3.7)]{KrScAB}, where it was used for the
enumeration of standard Young tableaux of certain skew shapes. 
As is pointed out there, that limit case had explicitly
appeared even earlier in \cite{KratBM}, where two different proofs
had been given
(one using a specialisation of an identity for Schur functions,
the other using a specialisation of a $q$-integral evaluation due
to Evans), and where it had been applied in an again different
context, namely that of the enumeration of domino tilings.

\subsection{The evaluation of $\mathcal{S}_{r,n}(1,1,0)$} 

\begin{proposition}[$\mathrm{A}_{r-1}$ summation] 
\label{thm:S120q}
Let $q$ be a real number with $0<q<1$.
For all non-negative integers or half-integers $m$ and $n$ and a positive
integer~$r$, we have
\begin{multline}\label{eq:S120q}
\sum_{k_1,\dots,k_r=-n}^n\;
\prod_{1\leq i<j\leq r} [k_j-k_i]_q^2
\prod_{i=1}^r q^{k_i^2+(m+n-2i+2)k_i} 
\qbin{2n}{n+k_i}_q \qbin{2m}{m+k_i}_q\\
=r!\, q^{-rmn-\frac {1}{6}r(r-1)(2r-3m-3n-1)} \kern5cm\\
\times \prod_{i=1}^r \frac{\Gamma_q(i)\,\Gamma_q(2n+1)\,\Gamma_q(2m+1)\,
\Gamma_q(2m+2n-r-i+3)}
{\Gamma_q(2n-i+2)\,\Gamma_q(2m-i+2)\,\Gamma_q^2(m+n-i+2)}.
\end{multline}
\end{proposition}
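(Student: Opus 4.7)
The strategy parallels the proofs of Propositions~\ref{thm:S220q}--\ref{thm:S223q}. The first step is to check that the summand of the left-hand side of \eqref{eq:S120q} is invariant under permutations of $k_1,\dots,k_r$. The factor $\prod_{i<j}[k_j-k_i]_q^2$ is only invariant up to a power of $q$, since $[k_i-k_j]_q^2=q^{2(k_i-k_j)}[k_j-k_i]_q^2$, but this extra power is exactly cancelled by the $i$-dependent piece $-2ik_i$ in the exponent $(m+n-2i+2)k_i$. The calculation is literally the one carried out in \eqref{eq:Sk}--\eqref{eq:Ek}, so the summand is symmetric. This allows one to restrict the sum to $-n\leq k_1<k_2<\dots<k_r\leq n$ at the cost of a factor~$r!$.

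Next, one performs the shift $k_i\mapsto k_i-n$ so that the indices run over $0\leq k_1<\dots<k_r\leq 2n$, and rewrites the Gaussian binomials, the $q$-power prefactor and the Vandermonde-type factor in $q$-shifted-factorial notation. Since \eqref{eq:S120q} contains neither a factor $\prod_{i<j}(1-aq^{k_i+k_j})^2$ nor a factor $(1-aq^{2k_i})/(1-a)$, the resulting multiple sum is an instance of the left-hand side of \eqref{eq:Rains} in the limit $a\to 0$; this limit case of the transformation has already been worked out explicitly as \cite[Equation~(3.7)]{KrScAB}, and it is this specialisation that I would apply. The remaining parameters among $b,c,d,e,f$ must be chosen so that two of the $q$-Pochhammer factors on the left produce $(q^{-2n};q)_{k_i}$ and $(q^{-2m};q)_{k_i}$ (matching the two Gaussian binomials in \eqref{eq:S120q}), while a relation of the type $b=aq/c$ is imposed to collapse the right-hand side of the transformation to the single nonzero term $k_i=i-1$ (the same mechanism that produced \eqref{eq:Rains3} from \eqref{eq:Rains}).

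The final step is to read off the resulting closed product and translate it from $q$-shifted factorials into the $q$-gamma-function form of \eqref{eq:S120q}. This amounts to a rather lengthy but routine bookkeeping exercise, entirely analogous to the end-game of the proofs in Section~\ref{sec:gamma=1-2}; repeated use of the identity $\Gamma_q(x)=(1-q)^{1-x}(q;q)_\infty/(q^x;q)_\infty$ and the symmetry $m\leftrightarrow n$ of the predicted answer can be used to organise the simplification.

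\textbf{Main obstacle.} The hard part is controlling the combined $a\to 0$ and parameter-collapsing limit inside Theorem~\ref{thm:Rains}: several ratios of infinite products and of Pochhammer symbols take the indeterminate forms $0/0$ or $0\cdot\infty$, and must be desingularised by first letting one parameter approach its limiting value along a carefully chosen path, in the same spirit as \eqref{eq:lim}. A secondary obstacle is that the final answer in \eqref{eq:S120q} is manifestly symmetric in $m$ and $n$, whereas the natural parameter assignment in \eqref{eq:Rains} distinguishes the two; one therefore has to verify or engineer this symmetry during the concluding simplification. Once this is handled, the specialisation of \eqref{eq:Rains} delivers the stated product without further surprises.
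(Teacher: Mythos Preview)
Your plan is essentially the one the paper uses: symmetrise, restrict to $k_1<\cdots<k_r$, shift by $n$, and appeal to the $a\to 0$ degeneration of the Rains--Coskun--Gustafson transformation with $b=aq/c$. The paper phrases this as the single-line specialisation $d=aq^{n-m}$, $e=q^{-m-n}$, $f=q^{-2n}$ in \eqref{eq:Rains3} followed by $a\to 0$; note in particular the coupling $d=aq^{n-m}$, which sends $d$ to zero together with $a$ while keeping $aq/d=q^{m-n+1}$ finite, so that exactly two terminating Pochhammer factors survive on the left. Your worry about delicate $0/0$ indeterminacies of the type handled in \eqref{eq:lim} is somewhat overstated for this case: with the above coupling the limit is routine and no analogue of \eqref{eq:lim} is needed, and the $m\leftrightarrow n$ symmetry emerges automatically from the product side once the $q$-shifted factorials are rewritten as $q$-gamma functions.
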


Taking the
$q\to 1$ limit, dividing both sides of the result by
${\binom {2m}m}^r$, and finally performing the limit
$m\to\infty$, we obtain (cf.~\eqref{eq:1any0})
\begin{align} \label{eq:S120}
\mathcal{S}_{r,n}(1,1,0)&=\sum_{k_1,\dots,k_r=-n}^n\;
\prod_{1\leq i<j\leq r} (k_i-k_j)^2 \prod_{i=1}^r \binom{2n}{n+k_i}\\
&=2^{2rn-r(r-1)} \prod_{i=1}^r \frac{\Gamma(i+1)\,\Gamma(2n+1)}
{\Gamma(2n-i+2)}. \notag
\end{align}

\begin{proof}
The special case of \eqref{eq:Rains3} which is relevant here is
$d=aq^{n-m}$, $e=q^{-m-n}$, $f=q^{-2n}$, and finally $a\to0$.
\end{proof}

\subsection{The evaluation of $\mathcal{S}_{r,n}(1,1,1)$} 

\begin{proposition}\label{Prop_S121}
For all non-negative integers $m$ and $n$ and a positive
integer~$r$, we have
\begin{multline} \label{eq:S121}
\sum_{k_1,\dots,k_r=-n}^n\; \prod_{1\leq i<j\leq r} (k_i-k_j)^2
\prod_{i=1}^r \, \abs{k_i} \binom{2n}{n+k_i}\binom{2m}{m+k_i}\\
=r! \prod_{i=1}^{\ceiling{r/2}}
\frac{\Gamma^2(i)\,\Gamma(2n+1)\,\Gamma(2m+1)\,\Gamma(m+n-i-\ceiling{r/2}+2)} 
{\Gamma(n-i+2)\,\Gamma(n-i+1)\,\Gamma(m-i+2)\,\Gamma(m-i+1)\,\Gamma(m+n-i+2)}
\\
\times \prod_{i=1}^{\floor{r/2}} 
\frac{\Gamma(i)\,\Gamma(i+1)\,\Gamma(2n+1)\,\Gamma(2m+1)\,
\Gamma(m+n-i-\floor{r/2}+1)} 
{\Gamma^2(n-i+1)\,\Gamma^2(m-i+1)\,\Gamma(m+n-i+2)}.
\end{multline}
\end{proposition}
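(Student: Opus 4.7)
My plan is to reduce $\mathcal{S}_{r,n}(1,1,1)$ to determinantal form, exploit parity to split the determinant into two smaller Hankel determinants, and identify each of these (via Andr\'eief applied in reverse) with an instance of the $\mathrm{BC}_r$-type summations already evaluated in Propositions~\ref{thm:S221q} and \ref{thm:S223q}.

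First, writing $\prod_{i<j}(k_i-k_j)^2=\det[k_j^{i-1}]_{1\le i,j\le r}^2$ and applying Heine's (Andr\'eief's) identity yields
\[
\mathcal{S}_{r,n}(1,1,1)=r!\,\det M,\qquad
M_{ij}:=\sum_{k=-n}^n k^{i+j-2}\,\abs{k}\,\binom{2n}{n+k}\binom{2m}{m+k}.
\]
Since the summand is even in $k$ exactly when $i+j$ is even, one has $M_{ij}=0$ whenever $i+j$ is odd. Simultaneously permuting rows and columns so that odd indices come first leaves the determinant unchanged and produces a block-diagonal matrix, so $\det M=\det M^{\mathrm{o}}\,\det M^{\mathrm{e}}$, where $M^{\mathrm{o}}_{pq}=T_{p+q-2}$ for $1\le p,q\le\ceiling{r/2}$ and $M^{\mathrm{e}}_{pq}=T_{p+q-1}$ for $1\le p,q\le\floor{r/2}$, with
\[
T_p:=2\sum_{k=1}^n k^{2p+1}\binom{2n}{n+k}\binom{2m}{m+k}.
\]

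Second, interpret $T_p$ as the $p$th moment of the discrete measure on $\{1^2,2^2,\dots,n^2\}$ that places mass $2k\binom{2n}{n+k}\binom{2m}{m+k}$ at $k^2$, and apply Andr\'eief in reverse to each Hankel determinant. The resulting summands are invariant under each individual sign flip $k_i\mapsto -k_i$ and vanish at $k_i=0$, so the sum over $\{1,\dots,n\}^N$ equals $2^{-N}$ times the same sum over $\{-n,\dots,n\}^N$; the $2^N$ is absorbed by the $\prod_i 2k_i$, leaving
\[
\det M^{\mathrm{o}}=\frac{1}{\ceiling{r/2}!}\,\widetilde{\mathcal{S}}_{\ceiling{r/2},n}(2,1,1),\qquad
\det M^{\mathrm{e}}=\frac{1}{\floor{r/2}!}\,\widetilde{\mathcal{S}}_{\floor{r/2},n}(2,1,3),
\]
where $\widetilde{\mathcal{S}}_{N,n}(2,1,\delta)$ denotes the $m$-parametrised sum on the left-hand side of \eqref{eq:S221q} (for $\delta=1$) or \eqref{eq:S223q} (for $\delta=3$), with $r$ replaced by $N$.

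Finally, specialising Propositions~\ref{thm:S221q} and \ref{thm:S223q} to $q=1$ evaluates both $\widetilde{\mathcal{S}}$ sums in explicit product form; the factorials $\ceiling{r/2}!$ and $\floor{r/2}!$ they produce cancel the ones in the denominators of $\det M^{\mathrm{o}}$ and $\det M^{\mathrm{e}}$, and multiplying by the initial $r!$ reproduces the right-hand side of \eqref{eq:S121} exactly. The main conceptual difficulty is recognising the decomposition: although $\mathcal{S}_{r,n}(1,1,1)$ is of $\mathrm{A}_{r-1}$ flavour --- its Vandermonde carries no $(k_i+k_j)^2$ factor --- parity nonetheless forces it to factorise into two $\mathrm{BC}$-type subsums of roughly half the rank, each already evaluated in the previous section. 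Once this split is spotted, all remaining manipulations are routine.
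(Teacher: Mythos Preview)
Your argument is correct and takes a genuinely different route from the paper's.

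The paper proceeds by writing each Vandermonde product in a carefully chosen polynomial basis (alternating between $1,k,(n^2-k^2),k(n^2-k^2),\dots$ for one factor and the analogous $m$-version for the other), expanding into a double sum over permutations, and evaluating each resulting single sum over~$k$ via Dixon's ${}_3F_2$ summation. This produces $r!\det(A_{i,j})$ for a checkerboard matrix whose two diagonal blocks are then evaluated directly using Cauchy's double alternant and a determinant lemma from \cite{Krattenthaler99}. Your approach instead uses the monomial basis and Andr\'eief's identity to reach a Hankel moment matrix, obtains the same checkerboard splitting from parity, and then---this is the key new idea---applies Andr\'eief \emph{in reverse} to each block, recognising the resulting half-rank sums as the $q\to 1$ specialisations of the $\BC$-type identities in Propositions~\ref{thm:S221q} and~\ref{thm:S223q}. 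All hypergeometric summations and determinant evaluations are thereby outsourced to Section~\ref{sec:gamma=1-2}.

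Your route is shorter and more conceptual: it explains the $\ceiling{r/2}/\floor{r/2}$ dichotomy in \eqref{eq:S121} as a factorisation into two $\BC$-type Macdonald--Mehta sums of half the rank, with $\delta=1$ and $\delta=3$ respectively. This is noteworthy because the paper explicitly remarks (at the start of Section~\ref{sec:gamma=1-1}) that the authors could not derive $\mathcal{S}_{r,n}(1,1,1)$ from the transformation \eqref{eq:Rains} and therefore resorted to an ad~hoc method; your argument shows that one \emph{can} obtain it from that machinery, just not in one piece. The paper's approach, on the other hand, is self-contained and does not depend on the results of Section~\ref{sec:gamma=1-2}.
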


Dividing both sides of \eqref{eq:S121} by $\binom {2m}{m}^r$ and performing 
the limit $m\to\infty$, we obtain (cf.~\eqref{eq:121})
\begin{align} \label{eq:S121rn}
\mathcal{S}_{r,n}(1,1,1)&=\sum_{k_1,\dots,k_r=-n}^n\; 
\prod_{1\leq i<j\leq r} (k_i-k_j)^2 \prod_{i=1}^r \, \abs{k_i}
\binom{2n}{n+k_i}\\
&= r! \prod_{i=1}^{\ceiling{r/2}}
\frac{\Gamma^2(i)\,\Gamma(2n+1)}{\Gamma(n-i+2)\,\Gamma(n-i+1)}
\prod_{i=1}^{\floor{r/2}}
\frac{\Gamma(i)\,\Gamma(i+1)\,\Gamma(2n+1)}{\Gamma^2(n-i+1)}. \notag
\end{align}

\begin{proof}
We start by writing the Vandermonde products
(there are \emph{two} since the Vandermonde product is squared) 
in the summand as the following determinants:
\[
\det_{1\leq i,j\leq r} \Big(1\quad k_i\quad (n^2-k_i^2)\quad 
k_i(n^2-k_i^2)\quad (n^2-k_i^2) \big((n-1)^2-k_i^2\big)\quad \dots \Big),
\]
respectively
\[
\det_{1\leq i,j\leq r} \Big(1\quad k_i\quad (m^2-k_i^2)\quad 
k_i(m^2-k_i^2)\quad (m^2-k_i^2) \big((m-1)^2-k_i^2\big)\quad \dots \Big),
\]
This has to be read in such a way that the individual entries above give
the columns of the matrix. More precisely, we have
\[
\prod_{1\leq i<j\leq r} (k_i-k_j)=\pm\det M(N),
\]
where $M(N)=\big(M_{i,j}(N)\big)_{1\leq i,j\leq r}$ is the $r\times r$
matrix defined by
\[
M_{i,j}(N)=(-1)^{2\floor{(j-1)/2}}k_i^{\chi(j\text{ even})}\,
(-N-k_i)_{\floor{(j-1)/2}}\,
(-N+k_i)_{\floor{(j-1)/2}},
\]
Here, as before, $\chi(\mathcal{A})=1$ if $\mathcal{A}$ is true and 
$\chi(\mathcal{A})=0$ otherwise, and
the Pochhammer symbol $(\alpha)_m$ is defined
by $(\alpha)_m:=\alpha(\alpha+1)\cdots(\alpha+m-1)$ for 
$m\geq 1$, and $(\alpha)_0:=1$. The substitution in \eqref{eq:S121} that
we apply is
\[
\prod_{1\leq i<j\leq r} (k_i-k_j)^2 = \det M(n)\cdot\det M(m).
\]
This turns the left-hand side of \eqref{eq:S121} into
\begin{align}\label{eq:Summe}
\sum_{\si,\tau\in S_r}&\sgn\si\tau 
\prod_{i=1}^r \Bigg(
\sum_{k_i=-\infty}^{\infty}\bigg(
\abs{k_i}\,k_i^{\chi(\si(i)\text{ even})+\chi(\tau(i)\text{ even})}\\
& \qquad\qquad\qquad \times
\frac {(2n)!} {(n+k_i-\floor{(\si(i)-1)/2})!\,
(n-k_i-\floor{(\si(i)-1)/2})!} \notag \\
& \qquad\qquad\qquad \times
\frac {(2m)!} {(m+k_i-\floor{(\tau(i)-1)/2})!\,
(m-k_i-\floor{(\tau(i)-1)/2})!}\bigg)\Bigg). \notag
\end{align}
We must now evaluate the sum over $k_i$.
There are three cases to be considered, depending on whether
$\si(i)$ and $\tau(i)$ are even or odd. 
For convenience, in the following we
shall use the short notation $S=\floor{(\si(i)-1)/2}$
and $T=\floor{(\tau(i)-1)/2}$.

\smallskip
\textsc{Case 1}: \emph{$\si(i)$ and $\tau(i)$ are both odd.}
In this case we need to evaluate (writing $k$ instead of $k_i$)
\begin{multline}
\sum_{k=-\infty}^{\infty}
\abs{k}\, \frac{(2n)!}{(n+k-S)!\, (n-k-S)!}\cdot
\frac{(2m)!}{(m+k-T)!\, (m-k-T)!}\\
=2\sum_{k=1}^{\infty} k\,
\frac{(2n)!}{(n+k-S)!\,(n-k-S)!}\cdot
\frac{(2m)!}{(m+k-T)!\, (m-k-T)!}.
\label{eq:Sum1}
\end{multline}
We write this sum in terms of the standard hypergeometric notation
%CK: I changed this back to "long" notation.
%    The short notation is nowhere used afterwards, hence there is
%    no point in introducing it here.
\[
\hyp{r}{s}\bigg[\genfrac{}{}{0pt}{}{a_1,\dots,a_r}{b_1,\dots,b_s};z\bigg] 
=\sum_{\ell=0}^{\infty}
\frac{(a_1)_{\ell}\cdots(a_r)_{\ell}} 
{\ell!\,(b_1)_{\ell}\cdots(b_s)_{\ell}}\,z^{\ell},
\]
to obtain the expression
\begin{multline*}
2\,\frac{(2n)!}{(n-S+1)!\, (n-S-1)!}\cdot
\frac{(2m)!}{(m-T+1)!\, (m-T-1)!}\\
\times
\hyp{3}{2}\bigg[\genfrac{}{}{0pt}{}{2,-n+S+1,-m+T+1}{n-S+2,m-T+2};1\bigg].
\end{multline*}
This hypergeometric series can be evaluated by (the terminating
version) of Dixon's summation (see \cite[Appendix~(III.9)]{SlatAC})
\[
\hyp{3}{2}\bigg[\genfrac{}{}{0pt}{}
{a,b,-N}{1+a-b,1+a+N};1\bigg]=
\frac{(1+a)_N\,(1+\frac{a}{2}-b)_N}{(1+\frac{a}{2})_N\,(1+a-b)_N},
\]
where $N$ is a non-negative integer.
Indeed, if we choose $a=2$, $b=-n+S+1$, and $N=m-T-1$ in this summation
formula, then our expression becomes
\begin{equation}\label{eq:Exp1} 
\frac{1}{(m+n-S-T)}\cdot\frac{(2n)!}{(n-S)!\,(n-S-1)!}\cdot
\frac{(2m)!}{(m-T)!\,(m-T-1)!}
\end{equation}
after some simplification.

\smallskip
\textsc{Case 2}: \emph{$\si(i)$ and $\tau(i)$ have different parity.}
In this case, we need to evaluate the sum
\[
\sum_{k=-\infty} ^\infty \abs{k}\,k\,
\frac{(2n)!}{(n+k-S)!\, (n-k-S)!}\cdot
\frac{(2m)!} {(m+k-T)!\, (m-k-T)!}.
\]
Since replacement of the summation index $k$ by $-k$ converts
this expression into its negative, the sum above vanishes.

\smallskip
\textsc{Case 3}: \emph{$\si(i)$ and $\tau(i)$ are both even.}
Now we must evaluate the sum
\begin{multline*}
\sum_{k=-\infty}^{\infty} |k|\,k^2\,
\frac{(2n)!}{(n+k-S)!\, (n-k-S)!}\cdot
\frac{(2m)!}{(m+k-T)!\, (m-k-T)!}\\
=2\sum_{k=1}^{\infty} k^3\,
\frac{(2n)!}{(n+k-S)!\, (n-k-S)!}\cdot
\frac{(2m)!}{(m+k-T)!\, (m-k-T)!}.
\end{multline*}
We write
\begin{equation}\label{eq:Gl} 
k^2=-\big((n-S)^2-k^2\big)+(n-S)^2=-(n+k-S)(n-k-S)+(n-S)^2
\end{equation}
and substitute this in the summand. Splitting the sum accordingly,
we obtain the expression
\begin{multline*}
-2\sum_{k=1}^{\infty} k\,
\frac{(2n)!}{(n+k-S-1)!\, (n-k-S-1)!}\cdot
\frac {(2m)!} {(m+k-T)!\, (m-k-T)!}\\
+2(n-S)^2 \sum_{k=1}^{\infty} k\,
\frac{(2n)!}{(n+k-S)!\, (n-k-S)!}\cdot
\frac{(2m)!}{(m+k-T)!\, (m-k-T)!}.
\end{multline*}
We have evaluated both sums already earlier. To be more specific,
the second sum is the sum on the right-hand side of \eqref{eq:Sum1},
and the first sum arises by replacing $S$ by $S+1$ there. 
The closed-form expression for \eqref{eq:Sum1} is presented in
\eqref{eq:Exp1}. Consequently, our expression above becomes
\begin{multline}\label{eq:Exp3}
-\frac{1}{(m+n-S-T-1)}\cdot\frac{(2n)!}{(n-S-1)!\,(n-S-2)!}\cdot
\frac{(2m)!} {(m-T)!\,(m-T-1)!}\\
+\frac{(n-S)^2}{(m+n-S-T)}\cdot\frac{(2n)!}{(n-S)!\,(n-S-1)!}\cdot
\frac{(2m)!}{(m-T)!\,(m-T-1)!}\\
=\frac{1}{(m+n-S-T-1)(m+n-S-T)}\cdot\frac{(2n)!}{(n-S-1)!^2}\cdot
\frac{(2m)!}{(m-T-1)!^2}.
\end{multline}

If we summarise our findings so far
(combine \eqref{eq:Summe}, \eqref{eq:Exp1} and \eqref{eq:Exp3}), 
then we have seen that the left-hand side of \eqref{eq:S121} equals
\begin{equation}\label{eq:sumA} 
\sum_{\si,\tau\in S_r}\sgn\si\tau 
\prod_{i=1}^r A_{\si(i),\tau(i)},
\end{equation}
where, using the shorthand notation $K=\floor{(k-1)/2}$ and
$L=\floor{(l-1)/2}$,
\[
A_{k,l}=\begin{cases} 
\frac{1}{(m+n-K-L)}\cdot\frac{(2n)!}{(n-K)!\,(n-K-1)!}\cdot
\frac{(2m)!}{(m-L)!\,(m-L-1)!},
&\text{if $k$ and $l$ are odd,}\\[2mm] 
\frac{1}{(m+n-K-L-1)(m+n-K-L)}\cdot\frac{(2n)!}{(n-K-1)!^2}\cdot
\frac{(2m)!}{(m-L-1)!^2},
&\text{if $k$ and $l$ are even,}\\[2mm]
0,&\text{otherwise.}
\end{cases}
\]
We may reorder the product in \eqref{eq:sumA},
$$
\sum_{\si,\tau\in S_r}\sgn\tau\si^{-1} 
\prod_{i=1}^r 
A_{i,\tau\si^{-1}(i)}.
$$
Writing $\rho=\tau\si^{-1}$, we may as well sum over all
$\si$ and $\rho$. Thereby we obtain
$$
\sum_{\si,\rho\in S_r}\sgn\rho
\prod_{i=1}^r 
A_{i,\rho^{-1}(i)}
=
r!\det_{1\leq i,j\leq r}(A_{i,j}).
$$
Thus, the remaining task is to evaluate the determinant of the
$A_{i,j}$'s.

If we recall the definition of $A_{i,j}$, then we see that
the matrix $(A_{i,j})_{1\leq i,j\leq r}$ has a checkerboard structure.
By reordering rows and columns, the matrix can be brought into a block
form, from which it follows that
\begin{equation} \label{eq:A.A} 
\det_{1\leq i,j\leq r}(A_{i,j})
=
\det_{1\leq i,j\leq \ceiling{r/2}}(A_{2i-1,2j-1})\cdot
\det_{1\leq i,j\leq \floor{r/2}}(A_{2i,2j}).
\end{equation}
Explicitly, the first determinant on the right-hand side of
\eqref{eq:A.A} is
\begin{multline*}
\det_{1\leq i,j\leq \ceiling{r/2}}\bigg(
\frac{1}{(m+n-i-j+2)}\cdot\frac{(2n)!}{(n-i+1)!\,(n-i)!}\cdot
\frac{(2m)!}{(m-j+1)!\,(m-j)!}\bigg) \\
=\prod_{i=1}^{\ceiling{r/2}}\bigg(\frac{(2n)!}{(n-i+1)!\,(n-i)!}\cdot
\frac{(2m)!}{(m-i+1)!\,(m-i)!}\bigg)
\det_{1\leq i,j\leq \ceiling{r/2}}\bigg(\frac{1}{m+n-i-j+2}\bigg).
\end{multline*}
Clearly, the last determinant is a special case of Cauchy's
double alternant (take $X_i=n-i+1$ and $Y_j=m-j+2$ in Eq.~(2.7)
of \cite{Krattenthaler99}). Substitution of the result leads to 
\begin{multline} \label{eq:detA1} 
\det_{1\leq i,j\leq \ceiling{r/2}}(A_{2i-1,2j-1}) \\
=\prod_{i=1}^{\ceiling{r/2}}\frac{(2n)!}{(n-i+1)!\,(n-i)!}\cdot
\frac{(2m)!}{(m-i+1)!\,(m-i)!}\cdot
\frac{(i-1)!^2\,(m+n-i-\ceiling{r/2}+1)!} {(m+n-i+1)!}
\end{multline}
after some manipulation.

On the other hand, the second determinant on the right-hand side of
\eqref{eq:A.A} is
\begin{align*}
&\det_{1\leq i,j\leq \floor{r/2}}\bigg(
\frac{1}{(m+n-i-j+1)(m+n-i-j+2)}\cdot\frac{(2n)!}{(n-i)!^2}\cdot
\frac{(2m)!}{(m-j)!^2}\bigg) \\
&\quad =
\prod_{i=1}^{\floor{r/2}}\bigg(\frac{(2n)!} {(n-i)!^2}\cdot
\frac{(2m)!}{(m-i)!^2} \bigg)
\det_{1\leq i,j\leq \floor{r/2}}\bigg(
\frac{1}{(m+n-i-j+1)(m+n-i-j+2)}\bigg)\\
&\quad = 
\prod_{i=1}^{\floor{r/2}}\bigg( \frac{(2n)!}{(n-i)!^2}\cdot
\frac{(2m)!}{(m-i)!^2}\cdot\frac{(m+n-i-\floor{r/2})!}{(m+n-i+1)!}\bigg)\\
&\kern2cm
\times
\det_{1\leq i,j\leq \floor{r/2}}\Big(
(m+n-i-j+3)_{j-1}\,(m+n-i-\floor{r/2}+1)_{\floor{r/2}-j} \Big).
\end{align*}
In order to evaluate this determinant, we have to put
$n=\floor{r/2}$, $X_i=m+n-i$, $A_j=-j+1$, and $B_j=-j+3$
in \cite[Lemma~3]{Krattenthaler99}.
Substitution of the result gives
\begin{equation}\label{eq:detA2} 
\det_{1\leq i,j\leq \floor{r/2}}(A_{2i,2j})
=\prod_{i=1}^{\floor{r/2}}\frac{(2n)!}{(n-i)!^2}\cdot
\frac{(2m)!} {(m-i)!^2}\cdot
\frac{(m+n-i-\floor{r/2})!\,(i-1)!\,i!}{(m+n-i+1)!}
\end{equation}
after some manipulation.

If we finally combine \eqref{eq:A.A}, \eqref{eq:detA1}, and
\eqref{eq:detA2}, then we obtain the right-hand side of \eqref{eq:S121}.
\end{proof}

\begin{proposition}
For all positive half-integers $m$ and $n$ and a positive integer~$r$, we have
\begin{align}\label{eq:S121u}
&\sum_{k_1,\dots,k_r=-n}^n\; \prod_{1\leq i<j\leq r} (k_i-k_j)^2
\prod_{i=1}^r \, \abs{k_i} \, \binom{2n}{n+k_i}\binom{2m}{m+k_i}\\
&= r! \prod _{i=1}^{\ceiling{r/2}}
\frac{\Gamma(2n+1)}{\Gamma^2(n-i+\frac{3}{2})}\cdot
\frac{\Gamma(2m+1)}{\Gamma^2(m-i+\frac{3}{2})}\cdot
\frac{\Gamma^2(i)\,\Gamma(m+n-i-\ceiling{\frac{r}{2}}+2)}{\Gamma(m+n-i+2)}
\notag \\
&\quad\times \prod_{i=1}^{\floor{r/2}}
\frac{\Gamma(2n+1)}{\Gamma^2(n-i+\frac{3}{2})}\cdot
\frac{\Gamma(2m+1)}{\Gamma^2(m-i+\frac{3}{2})}\cdot
\frac{\Gamma(i)\,\Gamma(i+1)\,\Gamma(m+n-i-\floor{\frac{r}{2}}+1)} 
{\Gamma(m+n-i+2)} \notag \\
& \quad \times \sum_{s=0}^{\floor{r/2}}
(-1)^{\floor{r/2}-s}2^{-4(\floor{r/2}-s)}
\frac{m!\,n!\,(\floor{\frac {r} {2}})!\,(m+n-s)!} 
{s!\,(m-s)!\,(n-s)!\, (m+n-\floor{\frac{r}{2}})!}\,
{\binom {2\floor{\frac{r}{2}}-2s}{\floor{\frac{r}{2}}-s}}^2. \notag 
\end{align}
\end{proposition}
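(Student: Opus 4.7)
I would mirror the proof of Proposition~\ref{Prop_S121}. The polynomial identity $\prod_{i<j}(k_i-k_j)=\pm\det M(N)$, with $M(N)$ as defined in that proof, is valid for half-integer $N$ and half-integer $k_i$: only the combinations $n\pm k_i$ and $m\pm k_i$ must be integers, which holds by hypothesis. Writing the squared Vandermonde on the left-hand side of \eqref{eq:S121u} as $\det M(n)\,\det M(m)$, expanding both determinants over permutations $\sigma,\tau\in S_r$ and interchanging sums reduces the left-hand side to $r!\det_{1\leq i,j\leq r}(A_{i,j})$, where $A_{k,l}$ is a one-dimensional sum over half-integer $k_i\in\{-n,\dots,n\}$ that depends only on the parities of $k$ and $l$. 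The antisymmetry argument of Case~2 in the integer proof carries over unchanged: $A_{k,l}$ vanishes when $k,l$ have opposite parities, so $(A_{i,j})$ has checkerboard structure and its determinant factors as in \eqref{eq:A.A}.

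After the shift $k\mapsto k+\tfrac{1}{2}$ and the substitutions $N=n-\tfrac{1}{2}$, $M=m-\tfrac{1}{2}$, each of the two remaining single-variable sums becomes a terminating, balanced hypergeometric series at~$1$. The new feature is the factor $2j+1=(3/2)_j/(1/2)_j$, which upgrades each sum from the Dixon-summable ${}_3F_2$ of the integer case to a balanced terminating ${}_4F_3$. In general such a ${}_4F_3$ has no closed product form; however, by Whipple's transformation between a balanced terminating ${}_4F_3$ at argument~$1$ and a very-well-poised terminating ${}_7F_6$, combined with a contiguous-relation splitting $2j+1=2(j+1)-1$, the ``both odd'' sum $A_{2i-1,2j-1}$ should be expressible as a Cauchy-type kernel $1/(m+n-i-j+\text{const})$ times explicit gamma products. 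The ``both even'' sum $A_{2i,2j}$ is then reduced to a linear combination of two such shifted ``odd''-type sums by the identity $k^2=-((n-s)^2-k^2)+(n-s)^2$, exactly as in the integer proof.

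The main obstacle is the evaluation of $\det_{1\leq i,j\leq \floor{r/2}}(A_{2i,2j})$. In the integer case this determinant closed in single-product form via Krattenthaler's determinant lemma; in the half-integer case a residual balanced terminating ${}_4F_3$ factor appears to be inevitable, and it couples the row index~$i$, the column index~$j$ and an internal summation variable~$s$ in an essentially two-variable way, obstructing a product evaluation. My plan is to introduce $s$ explicitly, pull the $s$-sum outside the determinant by multilinearity, and then evaluate the resulting rescaled Cauchy-type determinant for each fixed~$s$ by Krattenthaler's lemma. Summing the resulting products over~$s$ should produce the displayed $\sum_{s=0}^{\floor{r/2}}$ on the right-hand side of \eqref{eq:S121u}, with the factors $2^{-4(\floor{r/2}-s)}\binom{2(\floor{r/2}-s)}{\floor{r/2}-s}^2=\bigl((1/2)_{\floor{r/2}-s}/(\floor{r/2}-s)!\bigr)^2$ arising from the extra $(1/2)_j$ Pochhammer symbols characteristic of the half-integer origin. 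The ``odd'' block determinant, in contrast, still closes in single-product form by Cauchy's double alternant, giving the first product line on the right of \eqref{eq:S121u}.
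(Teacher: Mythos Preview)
Your overall architecture matches the paper's proof: write the squared Vandermonde as $\det M(n)\det M(m)$, reduce to $r!\det(A_{i,j})$, exploit the checkerboard structure to factor into an ``odd'' block and an ``even'' block, evaluate the odd block by Cauchy's double alternant, and obtain the residual $s$-sum from the even block by multilinearity. That much is right.

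Where your hypergeometric analysis goes astray is in Case~1 (both $\sigma(i),\tau(i)$ odd). The ${}_4F_3$ you correctly identify,
\[
{}_4F_3\!\left[\begin{matrix}1,\tfrac32,-n+S+\tfrac12,-m+T+\tfrac12\\[1pt]\tfrac12,\,n-S+\tfrac32,\,m-T+\tfrac32\end{matrix};1\right],
\]
is \emph{very-well-poised} (top parameters $a,\tfrac{a}{2}+1$ with $a=1$) and is summed in closed product form directly by the classical formula \cite[(III.22)]{SlatAC}. There is no need for Whipple's ${}_4F_3\leftrightarrow{}_7F_6$ transformation or a contiguous splitting $2j+1=2(j+1)-1$; the Case~1 entry is simply
\[
\frac{1}{m+n-S-T}\cdot\frac{(2n)!}{(n-S-\tfrac12)!^2}\cdot\frac{(2m)!}{(m-T-\tfrac12)!^2}.
\]

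Consequently the source of the $s$-sum is not a ``residual ${}_4F_3$ factor''. What actually happens is that in Case~3, after the $k^2$ trick, the two shifted Case~1 evaluations \emph{fail to recombine} into a single Cauchy kernel (contrast \eqref{eq:Exp3}): each even-block entry $B^{(2)}_{i,j}$ is a genuine difference of two terms $\frac{(n-i+1)^2}{m+n-i-j+2}-\frac{(n-i+\tfrac12)^2}{m+n-i-j+1}$. Expanding $\det B^{(2)}$ by multilinearity in the \emph{rows} gives $2^{\lfloor r/2\rfloor}$ terms, of which all but those with the first $s$ rows taking the first option and the remaining rows the second survive (consecutive rows otherwise become proportional). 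For each fixed~$s$ the surviving determinant is again Cauchy's double alternant, not Krattenthaler's Lemma~3. Finally, the paper applies Whipple's balanced ${}_4F_3$ transformation only at the very end, to rewrite the resulting $s$-sum in a form that is visibly symmetric in $m$ and $n$; this is where the factors $2^{-4(\lfloor r/2\rfloor-s)}\binom{2\lfloor r/2\rfloor-2s}{\lfloor r/2\rfloor-s}^{2}$ appear.
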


\begin{proof}
The proof follows along the lines of the previous proof. In fact,
not much needs to be changed. Until we reach Case~1, everything
is identical. The sum to be evaluated in Case~1 is now
\[
2\sum_{k=1/2}^\infty k\,
\frac{(2n)!}{(n+k-S)!\, (n-k-S)!}\cdot
\frac{(2m)!}{(m+k-T)!\, (m-k-T)!},
\]
with the understanding that $k$ ranges over half-integers.
In hypergeometric terms, this sum equals
\begin{multline*}
\frac{(2n)!}{(n-S+\frac{1}{2})!\, (n-S-\frac{1}{2})!}\cdot
\frac {(2m)!}{(m-T+\frac{1}{2})!\, (m-T-\frac{1}{2})!} \\
\times
\hyp{4}{3}\bigg[\genfrac{}{}{0pt}{}{1,\frac{3}{2},-n+S+\frac{1}{2},
-m+T+\frac{1}{2}}{\frac{1}{2},n-S+\frac{3}{2},m-T+\frac{3}{2}};1\bigg].
\end{multline*}
This hypergeometric series can be summed by means of
the summation formula (see \cite[Appendix~(III.22)]{SlatAC})
\[
\hyp{4}{3}\bigg[\genfrac{}{}{0pt}{}{a,\frac {a} {2}+1,b,c}
{\frac{a}{2},1+a-b,1+a-c};1\bigg]
=\frac{\Gamma(1+a-b)\,\Gamma(1+a-c)\,\Gamma(\frac{1}{2}+\frac{a}{2})\,
\Gamma(\frac{1}{2}+\frac{a}{2}-b-c)}
{\Gamma(1+a)\,\Gamma(1+a-b-c)\,\Gamma(\frac{1}{2}+\frac{a}{2}-b)\,
\Gamma(\frac{1}{2}+\frac{a}{2}-c)}
\]
with $a=1$, $b=-n+S+\frac {1} {2}$, and $c=-m+T+\frac {1} {2}$. The
result is that our sum simplifies to
\[
\frac{1}{(m+n-S-T)}\cdot\frac{(2n)!}{(n-S-\frac {1} {2})!^2}\cdot
\frac{(2m)!}{(m-T-\frac{1}{2})!^2}.
\]

In Case~2 we obtain zero, as before. Finally, the result in Case~3 is
\begin{multline*}
-\frac{1}{(m+n-S-T-1)}\cdot\frac{(2n)!}{(n-S-\frac {3} {2})!^2}\cdot
\frac {(2m)!}{(m-T-\frac {1} {2})!^2}\\
+\frac{(n-S)^2}{(m+n-S-T)}\cdot\frac{(2n)!}{(n-S-\frac {1} {2})!^2}\cdot
\frac{(2m)!}{(m-T-\frac {1} {2})!^2}\\
=\frac {(2n)!}{(n-S-\frac {1} {2})!^2}\cdot
\frac {(2m)!}{(m-T-\frac {1} {2})!^2}\,
\bigg(\frac{(n-S)^2}{m+n-S-T}-\frac{(n-S-\frac{1}{2})^2}{m+n-S-T-1}\bigg).
\end{multline*}

Consequently, as in the previous proof, the left-hand side of
\eqref{eq:S121u} can be written as a product of two determinants
multiplied by $r!$. More precisely, it is equal to
\[
r!\cdot
\det_{1\leq i,j\leq \ceiling{r/2}}\big(B^{(1)}_{i,j}\big)\cdot
\det_{1\leq i,j\leq \floor{r/2}}\big(B^{(2)}_{i,j}\big),
\]
where
\[
B^{(1)}_{i,j}=
\frac{1}{(m+n-i-j+2)}\cdot\frac{(2n)!}{(n-i+\frac{1}{2})!^2}\cdot
\frac{(2m)!}{(m-j+\frac{1}{2})!^2},
\]
and
\[
B^{(2)}_{i,j}=\frac {(2n)!}{(n-i+\frac {1} {2})!^2}\cdot
\frac{(2m)!}{(m-j+\frac {1} {2})!^2}\,
\bigg(\frac {(n-i+1)^2}{m+n-i-j+2}-
\frac{(n-i+\frac{1}{2})^2}{m+n-i-j+1}\bigg).
\]
The first determinant
is again evaluated by applying Cauchy's double alternant,
\[
\det_{1\leq i,j\leq \ceiling{r/2}}\big(B^{(1)}_{i,j}\big)
=\prod_{i=1}^{\ceiling{r/2}}\frac{(2n)!}{(n-i+\frac{1}{2})!^2}\cdot
\frac{(2m)!}{(m-i+\frac{1}{2})!^2}\cdot
\frac{(i-1)!^2\,(m+n-i-\ceiling{r/2}+1)!}{(m+n-i+1)!}.
\]
In order to evaluate the second determinant, we observe that, after
having factored out the terms which depend only on the row index~$i$
or only on the column index~$j$, each entry is a sum of two terms.
We use linearity of the
determinant in the rows to decompose it into a sum of simpler
determinants. In principle, 
this leads to $2^{\floor{r/2}}$ terms. However, one readily sees that
in most of these two successive rows are linearly dependent, and hence these
terms vanish. More precisely, we have
\begin{multline*}
\det_{1\leq i,j\leq \ceiling{r/2}}\big(B^{(2)}_{i,j}\big) =
\prod_{i=1}^{\floor{r/2}}\frac{(2n)!}{(n-i+\frac{1}{2})!^2}\cdot
\frac{(2m)!}{(m-i+\frac{1}{2})!^2}\\
\times
\sum_{s=0}^{\floor{r/2}}(-1)^{\floor{r/2}-s}
\det_{1\leq i,j\leq \floor{r/2}}
\bigg(\frac{(n-i+1-\chi(i>s)\frac{1}{2})^2} {m+n-i-\chi(i>s)-j+2}\bigg).
\end{multline*}
The last determinant can be evaluated by appealing to
Cauchy's double alternant another time, and the result is
\begin{multline*}
\det_{1\leq i,j\leq \floor{r/2}}\big(B^{(2)}_{i,j}\big)
=\prod _{i=1}^{\floor{r/2}}\frac{(2n)!}{(n-i+\frac{1}{2})!^2}\cdot
\frac{(2m)!}{(m-i+\frac{1}{2})!^2}\cdot
\frac {(i-1)!\,i!\,(m+n-i-\floor{\frac{r}{2}})!}{(m+n-i+1)!}
\\
\times
\sum_{s=0}^{\floor{r/2}} (-1)^{\floor{r/2}-s}
\frac{(n-s+1)_s^2\,(n-\floor{\frac{r}{2}}+\frac{1}{2})_{\floor{r/2}-s}^2\,
(m+n-s)!} 
{s!\,(\floor{\frac{r}{2}}-s)!\,(m+n-s-\floor{\frac {r} {2}})!}.
\end{multline*}
In order to make the symmetry in $m$ and $n$ of the final result
immediately obvious, we convert the last sum into a different
form. This is done by first writing it in hypergeometric notation,
\begin{multline*}
\sum_{s=0}^R (-1)^{R-s}
\frac{(n-s+1)_s^2\,(n-R+\frac {1} {2})_{R-s}^2\, (m+n-s)!} 
{s!\,(R-s)!\,(m+n-s-R)!}\\
=(-1)^R \frac{(n-R+\frac{1}{2})_R^2\,(m+n)!}{R!\,(m+n-R)!}
\hyp{4}{3}\bigg[\genfrac{}{}{0pt}{}{-n, -n, -m-n+R,-R}
{-n+\frac {1} {2},-n+\frac {1} {2},-m-n};1\bigg]
\end{multline*}
(here, $R$ is short for $\floor{r/2}$), apply one of Whipple's
balanced $_4F_3$-transformation formulas 
(see \cite[Equation~(4.3.5.1)]{SlatAC}),
\begin{multline*}
\hyp{4}{3}\bigg[\genfrac{}{}{0pt}{}{a,b,c,-N}{e,f,1+a+b+c-e-f-N};1\bigg] =
\frac{(-a+e)_N\,(-a+f)_N}{(e)_N\,(f)_N} \\
\times \hyp{4}{3}\bigg[\genfrac{}{}{0pt}{}{-N,a,1+a+c-e-f-N,1+a+b-e-f-N}
{1+a+b+c-e-f-N,1+a-e-N,1+a-f-N};1\bigg]   
\end{multline*}
where $N$ is a non-negative integer, to obtain
\begin{multline*}
\sum_{s=0}^R (-1)^{R-s}
\frac{(n-s+1)_s^2\,(n-R+\frac {1} {2})_{R-s}^2\,
(m+n-s)!}{s!\,(R-s)!\,(m+n-s-R)!}\\
=\sum_{s=0}^R (-1)^{R-s}2^{-4(R-s)}
\frac {\Gamma(m+1)\,\Gamma(n+1)\,R!\,(m+n-s)!} 
{s!\,\Gamma(m-s+1)!\,\Gamma(n-s+1)!\,(m+n-R)!}\,
{\binom {2R-2s} {R-s}}^2.
\end{multline*}
Combining everything, we arrive at the right-hand side of
\eqref{eq:S121u}.
\end{proof}

\section{Discussion}\label{sec:disc}
We conclude our paper with a discussion of some open problems, 
additional results and future work.

\subsection{Arbitrary values of $\gamma$}

We have only proved discrete analogues of the Mac\-don\-ald--Mehta integral 
\eqref{Eq_Meh-Mac} for $\gamma=1/2$ and $1$, values which in type 
$\mathrm{A}$ correspond to the \emph{Gau\ss ian orthogonal} and 
\emph{Gau\ss ian unitary} random matrix ensembles GOE and GUE, see
e.g., \cite{FW08}. For more general
integer or half-integer values of $\gamma$, the sum \eqref{Eq_Srdef}
is not expressible in terms of a simple ratio of gamma functions.
One of the reasons for this is that we have insisted on the 
simplest-possible discrete analogue of the $G$-Vandermonde product
$\abs{\Delta(x^{\alpha})}^{2\gamma}$ as $\abs{\Delta(k^\alpha)}^{2\gamma}$.
To obtain formulas for more general choices of $\gamma$, more complicated 
analogues are required. For example, the $\mathrm{A}_{r-1}$ identity 
\eqref{eq:S120}, pertaining to $\gamma=1$, may be generalised to
all non-negative integer values of $\gamma$ as\footnote{To prove
this, we can take \cite[Theorem 4.1]{W05} with $a=q^{-n-m}$, $b\to\infty$,
$c=q^{m-n+1}$ and $k=\gamma$, where it is assumed without loss of generality
that $m\geq n$. Symmetrising the summand using Lemma 3.1 of that same paper
we obtain a generalisation of \eqref{eq:S120q} in which 
$[k_j-k_i]_q^2$ is replaced by $(q^{k_j-k_i},q^{1-\gamma+k_j-k_i};q)_{\gamma}$ 
and $q^{k_i^2+(m+n-2i+2)k_i}$ by $q^{k_i^2+(m+n-2(i-1)\gamma)k_i}$.
The rest follows as in the proof of \eqref{eq:S120}.}
\begin{multline*}
\sum_{k_1,\dots,k_r=-n}^n\;
\prod_{1\leq i<j\leq r} \Abs{(k_i-k_j)_{\gamma}\,(k_j-k_i)_{\gamma}} 
\prod_{i=1}^r \binom{2n}{n+k_i}\\
=2^{2rn-\gamma r(r-1)} \prod_{i=1}^r 
\frac{\Gamma(1+i\gamma)}{\Gamma(1+\gamma)}\cdot
\frac{\Gamma(2n+1)}{\Gamma(2n-(i-1)\gamma+1)}.
\end{multline*}
For $\gamma=2$ this choice of Vandermonde-type product is in agreement with
the \emph{discrete symplectic ensemble} considered by Borodin and Strahov 
\cite{BS09}. We did not, however, succeed in finding analogous generalisations
for the other summations presented in this paper.

\subsection{More general reflection groups}\label{sec:exc}

Another notable omission has been the treatment of reflection groups
other than $\mathrm{A}_{r-1}$, $\mathrm{B}_r$ and 
$\mathrm{D}_r$. So far we have not found nice closed-form discrete analogues 
of Macdonald's integral \eqref{Eq_Macdonald} for any of the exceptional 
reflection groups or for the remaining infinite series,
made up of the dihedral groups $I_2(m)$, $m\geq 3$ 
(the automorphism groups of the regular $m$-gons).
It is difficult to conclude with certainty that no nice 
discrete analogues actually exist for any of these missing cases.
In writing down the polynomials $P_G(x)$ for
$\mathrm{A}_{r-1}$, $\mathrm{B}_r$ and $\mathrm{D}_r$ in \eqref{Eq_Poly-ABD},
we implicitly used the fact that Macdonald's integral \emph{does not} depend
on the choice of $P_G(x)$. The actual form of $P_G(x)$ \emph{does}
depend on the choice of normals $a_i$ in \eqref{Eq_Ppoly}, and hence on the
choice of reflecting hyperplanes $H_1,\dots,H_m$ generating $G$.
For a given $G$, the set of hyperplanes, and hence the 
set of normals, is fixed up to a global rotation $R$ of $\R^r$.
If $a_i'=R(a_i)$ for $i=1,\dots,m$, then
\[
\Int_{\R^r} \BigAbs{\prod_{i=1}^m (a'_i\cdot y_i)}^{2\gamma} \dup \varphi(y)
\stackrel{y=R(x)}{=}
\Int_{\R^r} \BigAbs{\prod_{i=1}^m (a_i\cdot x_i)}^{2\gamma} \dup \varphi(x),
\]
since the measure $\varphi(x)$ is rotationally invariant.
At the discrete level, however, rotational invariance is lost, and hence the
choice of $P_G(x)$ crucially affects the definition of a discrete analogue.
Since there are infinitely many inequivalent choices of $P_G(x)$, there are
infinitely many discrete analogues one may wish to try.

\subsection{Expressing the discrete Macdonald--Mehta integrals uniformly}

Another loose end concerns the question as to whether the six integral
evaluations of Table~\ref{Table_ten} corresponding to
$\mathrm{A}_{r-1}$, $\mathrm{B}_r$ and $\mathrm{D}_r$ can be expressed
in a single expression using only data coming from the 
underlying reflection group.
Obviously, each case contains the factor
\[
\prod_{i=1}^r \frac{\Gamma(1+d_i\gamma)}{\Gamma(1+\gamma)}
\]
(with $\{d_i\}=\{1,\dots,r\}$ for $\mathrm{A}_{r-1}$,
$\{2,4,\dots,2r\}$ for $\mathrm{B}_r$ and
$\{2,4,\dots,2r-2,r\}$ for $G=\mathrm{D}_r$),
since the discrete evaluations reproduce the Macdonald--Mehta integral
in the limit. We have however not been able to write the $n$-dependent factors
in a uniform manner.

\subsection{Missing $q$-analogues of $\mathcal S_{r,n}(\alpha,\gamma,\delta)$}
\label{sec:q}
We have obtained $q$-analogues for all evaluations listed in 
Table~\ref{Table_ten} except for $(\alpha,\gamma,\delta)$ given by 
$(2,\frac{1}{2},0)$ and $(1,1,1)$.
We can easily write down a $q$-analogue for the first of these
two cases (given in \eqref{Eq_Dr-sum}). Instead of 
$\sum_{\la\subseteq (r^n)} \so_{2n,\la}(1^n)$ we have to consider
\[
\sum_{\la\subseteq (r^n)}
\so_{2n,\la}\big(q^{1/2},q^{3/2},\dots,q^{n-1/2}\big).
\]
Closed-form expressions for the summand as well as the actual sum are
available in \eqref{Eq_so-even-PS-wrong} and \eqref{Eq_ortho-Okada-spec-1}.
However, neither of these completely factor.
A more natural $q$-analogue might result from summing
\[
\sum_{\la\subseteq (r^n)}
\ortho_{2n,\la}\big(q^{1/2},q^{3/2},\dots,q^{n-1/2}\big)
\]
(cf.~\eqref{Eq_ortho-PS-dual} for a fully factored expression for 
the summand). Unfortunately, we do not know a simple formula for the 
above character sum.

The problem of finding a $q$-analogue of our evaluation of 
$\mathcal{S}_{r,n}(1,1,1)$ (given in \eqref{eq:121} as well as 
\eqref{eq:S121rn}) lies with identities such as \eqref{eq:Gl}
used in the proof of Proposition~\ref{Prop_S121}.
It seems highly non-trivial to come up with an appropriate $q$-analogue 
of \eqref{eq:Gl} such that in the next step of our calculations
a $q$-analogue of Dixon's summation may be applied.
In any case, the form of the evaluation of $\mathcal{S}_{r,n}(1,1,1)$, 
with its inherent distinction between even and odd~$r$ values,
is an indication that this particular case is an outlier.

\subsection{Alternating sums}\label{sec:alt}

As a variation on the main theme of the paper, we also considered the
\emph{alternating} sums
\begin{equation}\label{Eq_Shat}
\Shat_{r,n}(\alpha,\gamma,\delta):=\sum_{k_1,\dots,k_r=-n}^n
\abs{\Delta(k^{\alpha})}^{2\gamma}\,
\prod_{i=1}^r (-1)^{k_i}\,\abs{k_i}^{\delta}\, \binom{2n}{n+k_i}.
\end{equation}
This differs from $\mathcal{S}_{r,n}(\alpha,\gamma,\delta)$ only in the
sign $\prod_{i=1}^r (-1)^{k_i}$, but importantly, does not have a 
continuous analogue. The sum \eqref{Eq_Shat} admits a
closed-form evaluation for all ten choices of $\alpha,\beta$ and $\gamma$
considered in Table~\ref{Table_ten}. (In some cases these evaluations
are simply~$0$.) 
Since in each case a suitable adaptation of the arguments leading to the 
evaluation of $\mathcal S_{r,n}(\alpha,\gamma,\delta)$ suffices,\footnote{For 
example, in the proofs in Section~\ref{sec:gamma=1/2} we have to insert 
$(-1)^{\abs\lambda}$ in the summands of the appropriate character sums, 
while in the derivations in Sections~\ref{sec:gamma=1-2}
and \ref{sec:gamma=1-1} one typically has to specialise one
of the indeterminates $d,e,f$ in \eqref{eq:Rains3} to $-\sqrt{aq}$.}
we refrain from presenting the corresponding identities and proofs here.
We remark that it is often possible to prove alternating versions 
of most of our parametric extensions and $q$-analogues as well.
As a typical example, we here just state one such result.

\begin{proposition}\label{Prop:S222qp}
Let $q$ be a real number with $0<q<1$.
For all non-negative integers $n$, $m$, and $p$, and a positive
integer~$r$, we have
\begin{multline*}
\sum_{k_1,\dots,k_r=-n}^n\; \prod_{1\leq i<j\leq r} 
[k_j-k_i]_q^2\,[k_i+k_j]_q^2 \\ \times
\prod_{i=1}^r (-1)^{k_i}q^{\frac{3}{2}k_i^2-(2i-\frac{1}{2})k_i} 
\Abs{[k_i]_{q^2}\,[k_i]_q}\,
\qbin{2n}{n+k_i}_q \qbin{2m}{m+k_i}_q \qbin{2p}{p+k_i}_q \\
=(-1)^{\binom {r+1}2}r!\Big(\frac{2}{[2]_q}\Big)^r\,q^{-\binom{r+1}{3}}
\prod_{i=1}^r \bigg(\frac{\Gamma_q(2n+1)\,\Gamma_q(2m+1)\,\Gamma_q(2p+1)} 
{\Gamma_q(n-i+1)\,\Gamma_q(m-i+1)\,\Gamma_q(p-i+1)} \\
\times \frac{\Gamma_q(i)\,\Gamma_q(n+m+p-i-r+2)} 
{\Gamma_q(n+m-i+2)\,\Gamma_q(m+p-i+2)\,\Gamma_q(p+n-i+2)}\bigg).
\end{multline*}
\end{proposition}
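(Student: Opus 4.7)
The plan is to adapt the proof of Proposition~\ref{thm:S223q} to the present setting, accommodating the additional $q$-binomial $\qbin{2p}{p+k_i}_q$ and the alternating sign $(-1)^{k_i}$, along the general lines suggested in the remarks of Section~\ref{sec:alt}.

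First I would verify that the summand is invariant under the hyperoctahedral group acting on $(k_1,\dots,k_r)$ by signed permutations. Invariance under permutations follows by exactly the vanishing-exponent calculation employed in the proof of Proposition~\ref{thm:S220q}, since the $i$-dependent part of the summand is of the same linear-in-$k_i$ type. For invariance under a sign flip $k_\ell\to -k_\ell$, the factor $(-1)^{k_\ell}$, the three $q$-binomials (via $\qbin{2N}{N+k}_q=\qbin{2N}{N-k}_q$), and the quadratic piece $q^{\frac{3}{2}k_\ell^2}$ are manifestly invariant. A short computation shows that the Vandermonde-like product $\prod_{i<j}[k_j-k_i]_q^2[k_i+k_j]_q^2$ contributes a factor $q^{-4(\ell-1)k_\ell}$, the linear part of the exponent $q^{-(2\ell-\frac{1}{2})k_\ell}$ contributes $q^{(4\ell-1)k_\ell}$, and the bracket factors $\Abs{[k_\ell]_{q^2}[k_\ell]_q}$ contribute $q^{-2k_\ell}$ and $q^{-k_\ell}$ respectively. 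The total $-4(\ell-1)+(4\ell-1)-3=0$ yields invariance, so the sum equals $2^r r!$ times its restriction to $1\le k_1<k_2<\cdots<k_r$.

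Next, I would shift $k_i\to k_i+1$ (as in the proofs of Propositions~\ref{thm:S221q} and~\ref{thm:S223q}), rewrite the restricted sum in terms of $q$-shifted factorials, and apply~\eqref{eq:Rains3} with the specialisation
\[
a=q^{2},\qquad d=q^{1-n},\qquad e=q^{1-m},\qquad f=q^{1-p}.
\]
Here $a=q^{2}$ is dictated by the requirement that the factor $(1-aq^{2k_i})/(1-a)$ together with $(a;q)_{k_i}/(q;q)_{k_i}$ reproduce the bracket product $[k_i+1]_{q^2}[k_i+1]_q$ of the shifted summand. The three parameters $d,e,f$ generate, via the standard Pochhammer reflection identity $(q^{N-k+1};q)_k=(-1)^k q^{Nk-\binom{k+1}{2}}(q^{-N};q)_k$ applied to each of the three shifted $q$-binomials, the three factors $\qbin{2N}{N+k_i+1}_q$ with $N\in\{n,m,p\}$, each accompanied by a $(-1)^{k_i}$. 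The combined sign contribution is $(-1)^{3k_i}\cdot(-1)^{k_i}=1$, so no additional sign correction (and in particular no specialisation $f=-\sqrt{aq}$) is needed on the Rains side.

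The main obstacle will be the bookkeeping of linear-in-$k_i$ contributions to the overall $q$-exponent: the three reflection identities each contribute a power of $q$ linear in $k_i$ with coefficient depending on $n$, $m$, or $p$, and these must line up with both the explicit $q^{\sum_i(2i-1)k_i}$ and the prefactor $(a/(q^{2r-2}def))^{k_i}$ appearing in~\eqref{eq:Rains3}; any apparent mismatch coming from the asymmetric $i$-dependence of $-(2i-\tfrac12)k_i$ in the summand is resolved by reversing the labelling $k_i\mapsto k_{r+1-i}$ (permitted because the summand is symmetric). Moreover, as in the proof of Proposition~\ref{thm:S220q}, some of the specialised infinite products on the right-hand side of~\eqref{eq:Rains3} will be formally of the form $0/0$ and must be resolved by a limiting procedure analogous to~\eqref{eq:lim}. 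Once these are handled, standard rewriting of the product in terms of $q$-Gamma functions converts the right-hand side of~\eqref{eq:Rains3} into the closed form displayed in the statement.
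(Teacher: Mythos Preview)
Your approach is correct and uses the same engine as the paper, namely the summation~\eqref{eq:Rains3}, but you and the paper choose \emph{different} specialisations. The paper's one-line proof takes $a=q^{-2n}$, $d=q^{-m-n}$, $e=q^{-p-n}$, $f=q^{-n+1}$, which follows the pattern of Propositions~\ref{thm:S220q} and~\ref{thm:S222q} (permutation symmetry only, shift so that $a=q^{-2n}$). You instead model your argument on Propositions~\ref{thm:S221q} and~\ref{thm:S223q}: exploit the full hyperoctahedral symmetry, shift $k_i\to k_i+1$, and take $a=q^{2}$, $d=q^{1-n}$, $e=q^{1-m}$, $f=q^{1-p}$. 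Your structural matching of $(1-aq^{2k_i})/(1-a)$ and $(a;q)_{k_i}/(q;q)_{k_i}$ with $[k_i+1]_{q^2}$ and $[k_i+1]_q$, and of the three Pochhammer ratios with the three shifted $q$-binomials, is exactly right, as is your observation that the three sign factors $(-1)^{k_i}$ coming from the reflection identities absorb the alternating sign (so no $-\sqrt{aq}$ specialisation is needed here, despite the hint in Section~\ref{sec:alt}).

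Two remarks on the comparison. First, your choice is manifestly symmetric in $n,m,p$, whereas the paper's singles out $n$; since the statement itself is symmetric, your route leads to slightly less rewriting at the end. Second, your caution about a $0/0$ limit analogous to~\eqref{eq:lim} is actually unnecessary for \emph{your} specialisation: with $a=q^{2}$ one has $aq=q^{3}$, $aq/e=q^{m+2}$, $aq/f=q^{p+2}$, $aq^{2-r}/d=q^{n-r+3}$, etc., and for the non-trivial range $r\le\min(n,m,p)$ none of the infinite products on the right of~\eqref{eq:Rains3} vanish. That indeterminacy is precisely what the paper's choice $a=q^{-2n}$ incurs (via $(aq;q)_\infty=(q^{1-2n};q)_\infty$), so in this respect your variant is cleaner. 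The remaining bookkeeping of $q$-exponents and the relabelling $k_i\mapsto k_{r+1-i}$ to align with the $q^{\sum(2i-1)k_i}$ in~\eqref{eq:Rains3} is routine and works as you indicate.
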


\begin{proof}
This follows by specialising $a=q^{-2n}$, $d=q^{-m-n}$, $e=q^{-p-n}$ and 
$f=q^{-n+1}$ in \eqref{eq:Rains3}.
\end{proof}

Sending $p$ to $\infty$ in Proposition~\ref{Prop:S222qp}, we obtain
\begin{multline*}
\sum_{k_1,\dots,k_r=-n}^n\; \prod_{1\leq i<j\leq r}
[k_j-k_i]_q^2\,[k_i+k_j]_q^2\\ \times
\prod_{i=1}^r (-1)^{k_i}q^{\frac{3}{2}k_i^2-(2i-\frac{1}{2})k_i} 
\Abs{[k_i]_{q^2}\,[k_i]_q}\, \qbin{2n}{n+k_i}_q \qbin{2m}{m+k_i}_q\\
=(-1)^{\binom {r+1}{2}}r!\Big(\frac{2}{[2]_q}\Big)^r\,q^{-\binom{r+1}{3}}
\prod_{i=1}^r \frac{\Gamma_q(2n+1)\,\Gamma_q(2m+1)\,\Gamma_q(i)} 
{\Gamma_q(n-i+1)\,\Gamma_q(m-i+1)\,\Gamma_q(n+m-i+2)}.
\end{multline*}
Upon letting $q\to 1$, dividing both sides by $\binom{2m}{m}^r$,
and finally also letting $m$ tend to $\infty$, we arrive at
\[
\Shat_{r,n}(2,1,2)=
\begin{cases}
(-1)^{\binom{r+1}{2}}r!\big((2r)!\big)^r &\text{ if $n=r$},\\[1mm]
0 &\text{ otherwise}.
\end{cases}
\]

\subsection{Additional character identities}\label{sec:char}

In Section~\ref{sec:gamma=1/2} we evaluated the sum 
$\mathcal{S}_{r,n}(\alpha,\tfrac{1}{2},\delta)$ 
using identities for classical group characters. 
Our evaluations of $\mathcal{S}_{r,n}(\alpha,1,\delta)$ in
Sections~\ref{sec:gamma=1-2} and \ref{sec:gamma=1-1} were entirely
different, relying on a transformation formula for elliptic hypergeometric 
series. It is nevertheless natural to wonder whether there are also
character identities hidden behind the $\gamma=1$ formulas.
The answer to this question is, at least partially, affirmative.
If one specialises all variables $x_i$ to $1$ in the identities
given in \cite[Theorem~2.2]{Okada98}, then one obtains \eqref{eq:S220q},
\eqref{eq:S222q} and
\eqref{eq:S120q} in the integer-$n$ case, \emph{all for $q=1$}.
We discovered this fact in a rather roundabout way as follows.
Helmut Prodinger suggested to the first author that non-intersecting
lattice paths may have a role to play in proving some of the discrete
Macdonald--Mehta integrals, an idea we initially discarded. 
Subsequently we realised that the combinatorics of non-intersecting 
lattice paths can indeed be used to prove the evaluations
of $\mathcal{S}_{r,n}(\alpha,1,\delta)$ for 
$(\alpha,\delta)\in\{(1,0),(2,0),(2,2)\}$.
However, we did not see how to use this approach to also prove 
corresponding $q$-analogues. Clearly, to obtain these one would have to
introduce appropriate $q$-weights for the non-intersecting lattice paths. 
By introducing weights, we however discovered numerous identities 
for classical group characters, which Soichi Okada quickly identified as
\cite[Theorem~2.2]{Okada98}. While we still do not see how to
specialise these identities appropriately to produce $q$-analogues, 
using our combinatorial machinery we did find one identity for
Proctor's odd symplectic characters \cite{ProcAF} missed by Okada.
The full details of this part of the story of discrete analogues of 
Macdonald--Mehta integrals will be presented in \cite{BrKWAB}.


\begin{thebibliography}{99}

\bibitem{Andrews78}
G. E. Andrews,
\textit{Plane partitions (I): The MacMahon conjecture}, 
in \textit{Studies in Foundations and Combinatorics}, pp. 131--150,
Adv. in Math. Suppl. Stud., Vol.~1, 
Academic Press, New York--London, 1978.

\bibitem{BS09}
A. Borodin and E. Strahov,
\textit{Correlation kernels for discrete symplectic and orthogonal ensembles},
Commun. Math. Phys. \textbf{286}, (2009), 933--977.

\bibitem{BoLSAA}
A. Bostan, P. Lairez and B. Salvy, 
\textit{Multiple binomial sums},
\href{http://arxiv.org/abs/1510.07487}{arXiv:1510.07487}.

\bibitem{Bourbaki02}
N. Bourbaki,
\textit{Lie Groups and Lie Algebras; Chapters 4--6},
Elements of Mathematics,
Springer-Verlag, Berlin, 2002.

\bibitem{BrKWAB}
R. P. Brent, C. Krattenthaler and S. O. Warnaar,
\textit{Non-intersecting lattice paths and identities for classical
group characters}, in preparation.

\bibitem{BOOPAA}
R. P. Brent, H. Ohtsuka, J. H. Osborn and H. Prodinger,
\textit{Some binomial sums involving absolute values},
\href{http://arxiv.org/abs/1411.1477}{arXiv:1411.1477}.

\bibitem{BO}
R. P. Brent and J. H. Osborn,
\textit{Note on a double binomial sum relevant to the Hadamard maximal 
determinant problem},
\href{http://arxiv.org/abs/1309.2795}{arXiv:1309.2795}.

\bibitem{BOS}
R. P. Brent, J. H. Osborn and W. D. Smith,
\textit{Lower bounds on maximal determinants of binary matrices
via the probabilistic method}, 
\href{http://arxiv.org/abs/1402.6817}{arXiv:1402.6817}.

\bibitem{CoGuAA} H. Coskun and R. A. Gustafson.
\textit{Well-poised Macdonald functions $W_\la$ and Jackson coefficients
$\omega_\la$ on $BC_n$}, 
in: Proceedings of the workshop on Jack, Hall--Littlewood and Macdonald 
polynomials, V.~B.~Kuznetsov and S.~Sahi (eds.), pp.~127--155,
Contemp. Math., vol.~417, Amer. Math. Soc., Providence, R.I., 2006.

\bibitem{Etingof10}
P. Etingof, 
\textit{A uniform proof of the Macdonald--Mehta--Opdam identity for 
finite Coxeter groups}, 
Math. Res. Lett. \textbf{17} (2010), 275--282.

\bibitem{FW08}
P. J. Forrester and S. O. Warnaar,
\textit{The importance of the Selberg integral},
Bull. Amer. Math. Soc. (N.S.) \textbf{45} (2008), 489--534.

\bibitem{FK97}
M. Fulmek and C. Krattenthaler,
\textit{Lattice path proofs for determinantal formulas for symplectic 
and orthogonal characters},
J. Combin. Theory Ser.~A \textbf{77} (1997), 3--50. 

\bibitem{FH91}
W. Fulton and J. Harris,
\textit{Representation Theory. A First Course},
Graduate Texts in Math., 129,
Springer-Verlag, New York, 1991.

\bibitem{Garvan89}
F. Garvan,
\textit{Some Macdonald--Mehta integrals by brute force},
in \textit{$q$-Series and Partitions}, pp.~77--98, 
IMA Vol. Math. Appl., 18, Springer, New York, 1989. 

\bibitem{GR04}
G. Gasper and M. Rahman,
\textit{Basic Hypergeometric Series}, second edition,
Encyclopedia of Mathematics and its Applications, Vol.~96,
Cambridge University Press, Cambridge, 2004.

\bibitem{Hadamard83}
J. Hadamard,
\textit{R\'esolution d'une question relative aux d\'eterminants}, 
Bulletin des Sciences Math\'ematiques \textbf{17} (1983), 240--246.

\bibitem{Humphreys78}
J. E. Humphreys,
\textit{Introduction to Lie Algebras and Representation Theory}, 
Graduate Texts in Mathematics, Vol.~9, 
Springer-Verlag, New York--Berlin, 1978.

\bibitem{Humphreys90}
J. E. Humphreys,
\textit{Reflection Groups and Coxeter Groups},
Cambridge Studies in Advanced Mathematics, Vol.~29,
Cambridge University Press, Cambridge, 1990.

\bibitem{IW95}
M. Ishikawa and M. Wakayama,
\textit{Minor summation formula for Pfaffians}, 
Linear and Multilinear Algebra \textbf{39} (1995), 285--305.

\bibitem{King76}
R. C. King, 
\textit{Weight multiplicities for the classical Lie groups}, 
in \textit{Lecture Notes in Physics}, Vol.~50, pp.~490--499, 
Springer, New York, 1976. 

\bibitem{KES83}
R. C. King and N. G. I. El-Sharkaway, 
\textit{Standard Young tableaux and weight multiplicities of the 
classical Lie groups}, 
J. Phys. A \textbf{16} (1983), 3153--3177. 

\bibitem{KT87}
K. Koike and I. Terada,
\textit{Young-diagrammatic methods for the representation theory of the 
classical groups of type $B_n$, $C_n$, $D_n$},
J. Algebra \textbf{107} (1987), 466--511. 

\bibitem{Krattenthaler98}
C. Krattenthaler,
\textit{Identities for classical group characters of nearly rectangular shape},
J. Algebra \textbf{209} (1998), 1--64. 

\bibitem{Krattenthaler99}
C. Krattenthaler,
\textit{Advanced determinant calculus},
S\'em. Lothar. Combin. \textbf{42} (1999), Art.~B42q, 67~pp.

\bibitem {KratBM} 
C. Krattenthaler,
\textit{Schur function identities and the number of perfect matchings of 
holey Aztec rectangles}, 
Contemporary Math. \textbf{254} (2000), 335--350.

\bibitem{KrGVAA} 
C. Krattenthaler, A. J. Guttmann and X. G. Viennot,
\textit{Vicious walkers, friendly walkers and Young tableaux II: with a wall},
J. Phys.~A: Math. Gen. \textbf{33} (2000), 8835--8866.

\bibitem{KrScAB} 
C. Krattenthaler and M. Schlosser,
\textit{The major index generating function of standard Young tableaux
of shapes of the form ``staircase minus rectangle"}, Contemporary
Math. \textbf{627} (2014), 111--122.

\bibitem{KrScAC} 
C. Krattenthaler and C. Schneider,
\textit{Evaluation of binomial double sums involving absolute values},
preprint.

\bibitem{Littlewood50}
D. E. Littlewood, 
\textit{The Theory of Group Characters and Matrix Representations of Groups},
Oxford University Press, Oxford, 1950.

\bibitem{Macdonald82}
I. G. Macdonald,
\textit{Some conjectures for root systems},
SIAM J. Math. Anal. \textbf{13} (1982), 988--1007.

\bibitem{Macdonald95}
I. G. Macdonald,
\textit{Symmetric functions and Hall polynomials}, second edition, 
The Clarendon Press, Oxford University Press, New York, 1995.

\bibitem{MacMahon98}
P. A. MacMahon,
\textit{Partitions of numbers whose graphs possess symmetry},
Trans. Cambridge Phil. Soc. \textbf{17} (1898--99), 149--170.

\bibitem{Mehta74}
M.-L. Mehta, 
\textit{Problem 74--6, Three multiple integrals}, 
SIAM Review \textbf{16} (1974), 256--257.

\bibitem{MD63}
M.-L. Mehta and F. J. Dyson, 
\textit{Statistical theory of the energy levels of complex systems. V},
J. Math. Phys. \textbf{4} (1963), 713--719.

\bibitem{Okada89}
S. Okada,
\textit{On the generating functions for certain classes of plane partitions},
J. Combin. Theory Ser.~A \textbf{51} (1989), 1--23.

\bibitem{Okada98}
S. Okada,
\textit{Applications of minor summation formulas to rectangular-shaped 
representations of classical groups},
J. Algebra \textbf{205} (1998), 337--367. 

\bibitem{Opdam89}
E. M. Opdam,
\textit{Some applications of hypergeometric shift operators},
Invent. Math. \textbf{98} (1989), 1--18.

\bibitem{Opdam93}
E. M. Opdam,
\textit{Dunkl operators, Bessel functions and the discriminant of a 
finite Coxeter group}, 
Compositio Math. \textbf{85} (1993), 333--373.

\bibitem{ProcAE} 
R. A. Proctor,
\textit{Shifted plane partitions of trapezoidal shape},
Proc. Amer. Math. Soc. \textbf{89} (1983), 553--559.

\bibitem{ProcAF}
R. A. Proctor,
\textit{Odd symplectic groups},
Invent. Math. \textbf{92} (1988), 307--332.

\bibitem{ProcAK}
R. A. Proctor,
\textit{Young tableaux. {G}elfand patterns and branching rules
for classical {L}ie groups}, 
J. Algebra \textbf{164} (1994), 299--360.

\bibitem{Rains05}
E. M. Rains,
\textit{BC$_n$-symmetric polynomials},
Transform. Groups \textbf{10} (2005), 63--132.

\bibitem{RainAA} 
E. M. Rains,
\textit{$BC_n$-symmetric abelian functions}, 
Duke Math. J. \textbf{135} (2006), 99--180.

\bibitem{SchnAA}
C. Schneider, \textit{Symbolic summation assists combinatorics}, S\'em. Lothar.
Combin. \textbf{56} (2007), Article~B56b, 36~pp.

\bibitem{Selberg44}
A. Selberg, 
\textit{Bemerkninger om et multipelt integral},
Norsk Math. Tidsskr. \textbf{24} (1944), 71--78.

\bibitem{SlatAC} 
L. J. Slater, 
\textit{Generalized Hypergeometric Functions},
Cambridge University Press, Cambridge, 1966.

\bibitem{Sundaram90}
S. Sundaram, 
\textit{Orthogonal tableaux and an insertion algorithm for $SO(2n+1)$},
J. Combin. Theory Ser.~A \textbf{53} (1990), 239--256.

\bibitem{WarnAG} 
S. O. Warnaar,
\textit{Summation and transformation formulas for elliptic hypergeometric 
series},
Constr. Approx. \textbf{18} (2002), 479--502.

\bibitem{W05}
S. O. Warnaar,
\textit{$q$-Selberg integrals and Macdonald polynomials},
Ramanujan J. \textbf{10} (2005), 237--268.

\end{thebibliography}
\end{document}